\documentclass[12pt]{amsart}
\usepackage{a4wide}
\oddsidemargin-10pt
\usepackage{amssymb,amsfonts,amsmath,amsthm}
\usepackage{latexsym}
\usepackage[latin1]{inputenc}

\usepackage[hyperindex,colorlinks]{hyperref}

\newcommand{\e}{{\mathord{\bf e}}}
\newcommand{\pa}{{\mathord{\partial}}}

\newcommand{\setN}{{\mathord{\mathbb N}}}

\newcommand{\setR}{{\mathord{\mathbb R}}}

\def\supp{\text{\rm{supp}}}

 \newtheorem{thm}{Theorem}[section]
 \newtheorem{cor}[thm]{Corollary}
 \newtheorem{lem}[thm]{Lemma}
 \newtheorem{prop}[thm]{Proposition}
 \theoremstyle{definition}
 \newtheorem{defn}[thm]{Definition}
 \theoremstyle{remark}
 \newtheorem{rem}[thm]{Remark}
 
 \numberwithin{equation}{section}

\newtheorem{con}[thm]{Assumptions}

\begin{document}

\title[Local existence of relativistic perfect
fluids]
 {Local existence of solutions of  self gravitating relativistic perfect
fluids}
\author[U. Brauer]{Uwe Brauer}

\address{%
  Departamento de  Matem\'atica Aplicada\\ Universidad Complutense Madrid
28040 Madrid, Spain}
\email{oub@mat.ucm.es}
    
\author[L. Karp]{Lavi Karp}

\address{%
Department of Mathematics\\ ORT Braude College\\
P.O. Box 78, 21982 Karmiel\\ Israel}

\email{karp@braude.ac.il}

\thanks{Research  supported  ORT
Braude College's Research Authority}

\subjclass{Primary 35L45, 35Q75
; Secondary 58J45, 83C05}

\keywords{ Einstein-Euler systems,   hyperbolic symmetric systems, energy
estimates, Makino variable, weighted fractional Sobolev spaces.
}

\begin{abstract}

  This paper deals with the evolution of the Einstein gravitational
  fields which are coupled to a perfect fluid.  We consider the
  Einstein--Euler system in asymptotically flat spacestimes and
  therefore use the condition that the energy density might vanish or
  tend to zero at infinity, and that the pressure is a fractional
  power of the energy density. In this setting we prove a local in
  time existence, uniqueness and well posedness of classical
  solutions.  The zero order term of our system contains an expression
  which might not be a $C^\infty$ function and therefore causes an
  additional technical difficulty.  In order to achieve our goals we
  use a certain type of weighted Sobolev space of fractional order.
  In \cite{BK3} we constructed an initial data set for these of
  systems in the same type of weighted Sobolev spaces.

  We obtain the same lower bound for the regularity  as Hughes,
  Kato and Marsden \cite{hughes76:_well} got for the vacuum Einstein
  equations. However, due to the presence of an equation of state with
  fractional power, the regularity is bounded from above.

\end{abstract}

\maketitle

\section{Introduction}
\label{sec:introduccion}

This paper deals with the Cauchy problem for the Einstein--Euler
system describing a relativistic self-gravitating perfect fluid, whose
density either has compact support or falls off at infinity in an
appropriate manner.

The evolution of the gravitational field is described by the Einstein
equations
\begin{equation}
\label{eq:eineul:1}
G_{\alpha\beta}= R_{\alpha\beta}-
\frac{1}{2}g_{\alpha\beta}R = 8\pi T_{\alpha\beta},
\end{equation}
where $g_{\alpha\beta}$ is a semi Riemannian metric having a
signature $(-,+,+,+)$, $R_{\alpha\beta}$ is the Ricci curvature tensor, and $R$
is the scalar curvature.
Both tensors are functions of the metric $ g_{\alpha\beta}$ and its first and
second order partial derivatives.
The right-hand side of (\ref{eq:eineul:1}) consists of the
energy--momentum tensor $T_{\alpha\beta}$, which in the case of a perfect fluid
takes the form
\begin{equation}
  \label{eq:eineul:2} 
T^{\alpha\beta} = (\epsilon+p) u^\alpha u^\beta + pg^{\alpha\beta},
\end{equation} 
where $\epsilon$ is the energy density, $p$ is the pressure and $u^\alpha$ is the
four-velocity vector.
The vector $u^{\alpha}$ is a unit timelike vector, which means that it
satisfies the normalization condition

\begin{equation}
  \label{eq:publ-broken:2} g_{\alpha\beta} u^\alpha u^\beta=-1.
\end{equation} 
The Euler equations describing the evolution of the
fluid take the form
\begin{equation}
  \label{eq:eineul:3} \nabla_\alpha T^{\alpha\beta}=0,
\end{equation} 
where $\nabla$ denotes the covariant derivative associated
with the metric $g_{\alpha\beta}$.
Equations (\ref{eq:eineul:1}) and (\ref{eq:eineul:3}) are not
sufficient to determinate the structure uniquely, a functional
relation between the pressure $p$ and the energy density $\epsilon$ (equation
of state) is also needed.
We choose an equation of state that has been used in astrophysical
problems.
It is the analogue of the well known polytropic equation of state in
the non-relativistic theory, given by
\begin{equation}
  \label{eq:eineul:4} p = p(\epsilon) = K\epsilon^{\gamma}, \qquad K,\gamma\in\setR^{+},\qquad 1<\gamma.
\end{equation}
The sound velocity is denoted by
\begin{equation*}
 \sigma^2=\frac{d p}{d\epsilon},
\end{equation*}
and the range of the energy density $\epsilon$ will be
restricted so that the causality condition
\begin{math}
 \sigma^2<1
\end{math} 
will hold.

The unknowns of these equations are the semi Riemannian metric
$g_{\alpha\beta}$, the velocity vector $u^\alpha$ and the energy density $\epsilon$.
These are functions of $t$ and $x^a$, where $x^a$ $(a=1,2,3)$ are the
Cartesian coordinates on $\setR^3$.
The alternative notation $x^0=t$ will also be used and Greek indices
will take the values $0,1,2,3$ in the following.

In the present paper we prove the well-posedness of the coupled
systems (\ref{eq:eineul:1}), (\ref{eq:eineul:2}), (\ref{eq:eineul:3})
and (\ref{eq:eineul:4}) under the harmonic gauge condition in
asymptotically flat spacetimes.
In order to achieve this, we need to rewrite the above equations as a
hyperbolic system.

In astrophysical context the density $\epsilon$ is expected to have compact
support, or tend to zero at spatial infinity in an appropriate sense.
It is well known that the usual symmetrization of the Euler equations
is badly behaved in cases where the density tends to zero somewhere.
The coefficients of the system degenerate or become unbounded when $\epsilon$
approaches zero.
It was observed by Makino
\cite{makino86:_local_exist_theor_evolut_equat_gaseous_stars} that
this difficulty can be to some extend circumvented in the case of a
non-relativistic fluid by using a new matter variable $w$ in place of
the mass density.
For this reason we introduce the quantity
\begin{equation}
  \label{eq:intro:11} 
w=M(\epsilon)=\epsilon^{\frac{\gamma-1}{2}},
\end{equation} 
and we call it the Makino variable.
A similar device was used by Gamblin \cite{gamblin93:_solut_euler} and
Bezard \cite{Bezard_93} for the Euler-Poisson equations, and by
Rendall \cite{Rendall92:-fluid} and Oliynyk
\cite{oliynyk07:_newton_limit_perfec_fluid} for the Einstein--Euler
equations.
The common method for solving the Cauchy problem for the Einstein
equations consists usually of the following steps.
\begin{enumerate}
  \item[{ 1.}] Initial data must satisfy the constraint equations,
which are intrinsic to the initial hypersurface. Therefore, the first
step is to construct solutions of these constraints.
  \item[{ 2.}]  The second step is to use the harmonic coordinate
condition and to solve the evolution equations with these initial
data.
  \item[{ 3.}]  The last step is to prove that the harmonic coordinate
condition and the solution of the constraints propagate. That means if
they held on a initial hypersurface, they hold for later times.
\end{enumerate}
The last step was treated in detail, for example in Fisher and
Marsden\cite{FMA}.
The idea is to work out the condition $\nabla_{\alpha}G^{\alpha\beta}=0$.
Since our energy--momentum satisfy (\ref{eq:eineul:3}), their result
can be immediately generalised for our case.
But for the sake of brevity we have omitted the details.

However, the presence of the equation of state (\ref{eq:eineul:4})
introduces an additional step: the compatibility problem of the
initial data for the fluid and the gravitational field (see
(\ref{eq:intro:7})).
There are three types of initial data for the Einstein--Euler system:

\begin{itemize}
  \item The gravitational data is a triple $(\mathcal{M},h,K_{ab})$,
  where $\mathcal{M}$ is a space-like manifold, $h$ is a proper
  Riemannian metric on $\mathcal{M}$, and $K_{ab} $ is the second
  fundamental form on $\mathcal{M}$ (extrinsic curvature).
  The pair $(h,K_{ab})$ must satisfy the constraint equations
  (\ref{eq:intro:6});

  \item The matter variables, consisting of the energy density $z$ and
  the momentum density $j^a$, appear on the right hand side of the
  constraints (\ref{eq:intro:6});

  \item The initial data for Makino variable $w$ and the velocity
  vector $u^\alpha$ of the perfect fluid.

\end{itemize}
The only type of Sobolev spaces which are known to be useful for
existence theorems for the constraint equations in an asymptotically
flat manifold, are the weighted Sobolev spaces $H_{k,\delta}$, where $k\in\setN $
and $\delta\in\setR$.
These spaces were introduced by Nirenberg and Walker
\cite{nirenberg73:_null_spaces_ellip_differ_operat_r} and Cantor
\cite{cantor75:_spaces_funct_condit_r}, and they are the completion of
$C_0^\infty(\mathbb{R}^3)$-functions under the norm
\begin{equation}
\label{eq:intro:5} 
\|u\|_{k,\delta}^{2} =\sum_{|\alpha|\leq k}\int\left((1+| x|)^{\delta+|\alpha|}
|\partial^\alpha u|\right)^2dx.
\end{equation}

Due to the presence of the equation of state (\ref{eq:eineul:4}) and
the Makino variable (\ref{eq:intro:11}), we have to estimate
$\|w^{\frac{2}{\gamma-1}}\|_{{k,\delta}}$.
So it is perhaps worth discussing the estimate of the Sobolev's norm
of $u^\beta$ in more details for $\beta>1$.
For simplicity we discuss this in the ordinary Sobolev space
$H^k=H^k(\setR^3)$.
The simplest case is when $\beta\in \setN $, then $\|u^\beta\|_{H^k}\leq C(\|u\|_{L^\infty})
\|u\|_{H^k}$ and there is no restriction on $k$.
When $\beta\not\in\mathbb{N}$, then we obtain the same estimate, provided
that $k\leq \beta$.
This bound on $k$ was improved by Runst and Sickel
\cite{runst96:_sobol_spaces_fract_order_nemyt} to $k<\beta+\frac{1}{2}$.
Applying this to $\beta=\frac{2}{\gamma-1}$, and taking into account the
Sobolev embedding $\|u\|_{L^\infty}\leq C\|u\|_{H^k}$ for $k>\frac{3}{2}$, we get
a lower and upper bound for $k$:
\begin{equation}
\label{eq:intro:1} \frac{3}{2}<k<\frac{2}{\gamma-1}+\frac{1}{2}.
\end{equation} 
The only exception is the case when $\frac{2}{\gamma-1}$ is an integer.
Note that for certain values of $\gamma$, inequalities (\ref{eq:intro:1})
possesses no integer solution.
Hence, for these values of $\gamma$ it is impossible to obtain a solution
to the Einstein--Euler system in Sobolev spaces of integer order.
So in order to be able to solve the coupled system for the maximal
range of the power $\gamma$, and in addition, to improve the regularity of
the solutions, we are considering the Cauchy problem in the weighted
fractional spaces $H_{s,\delta}$, where $s$ is real number (see Definition
\ref{def:weighted:3}).
These spaces were introduced by Triebel
\cite{triebel76:_spaces_kudrj2}, and they generalize $H_{k,\delta}$ to a
fractional order.
In \cite{BK3} the authors constructed initial data for coupled systems
(\ref{eq:eineul:1}), (\ref{eq:eineul:2}) and (\ref{eq:eineul:3}) with
the equations of state (\ref{eq:eineul:4}).
This includes the solution to the constraint equations
(\ref{eq:intro:6}), as well as the solution to the compatibility
problem between the matter variable $(z,j^a) $ and the fluid variables
$(w,u^\alpha)$, (\ref{eq:intro:7}), in the $H_{s,\delta}$-spaces.
Here we will establish the well-posedness of Einstein--Euler systems
in the weighted fractional Sobolev spaces $H_{s,\delta}$.

The common way to prove well-posedness is to rewrite the evolution
equations as a symmetric hyperbolic system.
So our first step is to use the Makino variable
(\ref{eq:intro:11}) and to reduce the Euler equations
(\ref{eq:eineul:3}) to a uniformly first order symmetric hyperbolic
system.
This result was announced in \cite{ICH12} and here we present a
detailed proof of it.
Our hyperbolic reduction is based on the fluid decomposition; for
alternative reductions see \cite{Rendall92:-fluid}.

It is well-known that the Einstein equations can be written as a
system of quasi-linear wave equations under the harmonic gauge
condition \cite{choquet--bruhat52, CHY, wald84:_gener_relat}.
The proofs of existence and uniqueness either use second order
techniques \cite{choquet--bruhat52, Christodoulou_1981, HAE,
  hughes76:_well, KR1}, or transferring the equations to a first order
symmetric hyperbolic system.
Fischer and Marsden used the first order techniques and obtained the
well-posedness of the reduced vacuum Einstein equations in $ H^s$ and
for $s>\frac{7}{2}$ \cite{FMA}.
This result was improved by Hughes, Kato and Marsden
\cite{hughes76:_well}, who obtained 
$(g_{\alpha\beta},\partial_t g_{\alpha\beta})\in H^{s+1}\times
H^s$ for $s>\frac{3}{2}$.
They used second order theory, and took advantage of the specific form
of the quasi-linear system of wave equations, namely, that the coefficients
depend only on the semi-metric $g_{\alpha\beta}$, but not on its first order
derivatives.

Our aim is to prove existence and uniqueness of the reduced
Einstein--Euler system (\ref{eq:eineul:1}), (\ref{eq:eineul:2}) and
(\ref{eq:eineul:3}) with the equation of state (\ref{eq:eineul:4}).
In addition, we would like to achieve the same regularity of the
metric as in \cite{hughes76:_well}.
But since we have here a coupled system which one of them is a first
order, the second order techniques of Hughes, Kato and Marsden in
\cite{hughes76:_well} are no longer available for the present
problem.

In asymptotically flat spacetimes the initial metric $g_{\alpha\beta}(0)$
differs from the Minkowski metric by a term which is $O(1/r)$ at
spatial infinity, and this term does not belong to $H^s$.
It is therefore more appropriate to consider both the constraint and
evolution equations in the $H_{s,\delta}$ spaces rather than the unweighted
spaces $H^s$.
For the vacuum equations the second author obtained well-posedness of
the reduced Einstein equations with 
$(g_{\alpha\beta},\partial_t g_{\alpha\beta})\in H_{s+1,\delta}\times
H_{s,\delta+1}$, $s>\frac{3}{2}$ and $\delta>-\frac{3}{2}$, see \cite{Ka1}.
But unlike Hughes, Kato and Marsden \cite{hughes76:_well}, he
treated the quasi-linear system of wave equations as a first order symmetric
hyperbolic system.
The first order techniques have the advantage that they enable, in a
convenient way, the coupling of the gravitational field equations to
other matter models, in particular, to perfect fluids.
In the Appendix we explain the main idea of \cite{Ka1} which allows us
to obtain the regularity index $s>\frac{3}{2}$ by means of first order
hyperbolic systems.

A crucial step in the proof of existence and uniqueness of any
hyperbolic system is to establish energy estimates for the linearized
system.
In order to achieve this we define an appropriated inner--product of
the $H_{s,\delta}$ spaces, which takes into account the coefficients of the
linearized system (see Section \ref{sec:energy-estimates} ).
A similar inner--product was used in \cite{Ka1}, and here we rely on
these energy estimates.

Once we have obtained the energy estimates for the linearized system,
we use Majda's iterative scheme in order to obtain existence and
uniqueness of the quasi--linear symmetric hyperbolic system
\cite{majda84:_compr_fluid_flow_system_conser}.
This procedure uses the fact that solutions to a linear first order
symmetric hyperbolic system with $C_0^\infty$ coefficients and initial
data, are also $C_0^\infty$.
But here we encounter a further difficulty, namely, the right hand
side of (\ref{eq:eineul:1}) contains the fractional power
$w^{\frac{2}{\gamma-1}}$, see (\ref{eq:2.3}).
So even when $w\in C_0^\infty$ and $w\geq 0$, $w^{\frac{2}{\gamma-1}}$ might not be a
$C^\infty$ function.
We solve that problem by using the fact  that $\epsilon
=w^{\frac{2}{\gamma-1}}$ satisfies a certain first order linear equation.
Gamblin encountered a similar problem for the Euler--Poisson equations
\cite{gamblin93:_solut_euler}, but he solved it in a somewhat
different way.

Our results improve the existence theory of solutions locally in time
of self gravitating relativistic perfect fluids in several aspects.
Rendall studied this problem in \cite{Rendall92:-fluid}, but he
assumed time symmetry, which means that the extrinsic curvature of the
initial manifold is zero, and therefore the Einstein constraint
equations are reduced to a single scalar equation.
In addition, he dealt only with $C^{\infty}_0$-solutions.
In his study of the Newtonian limit of perfect fluids, Oliynyk
obtained existence locally in time in the weighted space of integer
order $H_{k,\delta}$, for $k\geq 4$
\cite{oliynyk07:_newton_limit_perfec_fluid}.
Both Rendall and Oliynyk assume that the adiabatic exponent of
(\ref{eq:eineul:4}) satisfies the condition that $\frac{2}{\gamma-1}$ is an
integer.
 
The paper is organized as follows: In the next section we define the
weighted Sobolev spaces of fractional order $H_{s,\delta}$ and state the
main result.
Section \ref{sec:init-value-probl} has two subsections: the first one
deals with the hyperbolic reduction of the Euler equations
(\ref{eq:eineul:3}); in the second one we spell out the matrices which
describe the coupled equations (\ref{eq:eineul:1}),
(\ref{eq:eineul:2}) and (\ref{eq:eineul:3}) as a hyperbolic system.

In Section \ref{sec:properties} we present tools and properties of the
$H_{s,\delta}$-spaces which we need in the course of the publication.
We also define the corresponding product spaces.
The energy estimates for the linearized system are considered in
Section \ref{sec:energy-estimates}, there we also define the
appropriate inner-product.
In Section \ref{sec:iteration} we treat the iteration procedure.
Parts of the steps are standard and known, but some of them require
special attention due to the specific form of the system
(\ref{eq:3.4}) and the product spaces.
In this Section we will use the fact that the coefficients of the
first order derivatives depend only on the semi-metric $g_{\alpha\beta}$.
Finally, in Section \ref{sec:main} we prove the main result.
In the Appendix we give a heuristic idea explaining how that fact that
the coefficients of the system of wave equations depend only on the semi--metric
$g_{\alpha\beta}$ enable us to obtain the desired regularity by means of
symmetric hyperbolic systems.

\section{The main results}
\label{sec:main-results}
We obtain the well-posdeness in the weighted Sobolev spaces of
fractional order.
So we first recall their definition.

Let $\{\psi_j\}_{j=0}^\infty \subset C_0^\infty(\setR^3)$ be a sequence of cutoff function such
that, $\psi_j(x)\geq0$ for all $j\geq 0$, $\supp(\psi_j)\subset \{x: 2^{j-2}\leq |x| \leq
2^{j+1}\}$, $\psi_j(x)=1$ on $\{x: 2^{j-1}\leq |x| \leq 2^{j}\}$ for $j=1,2,...$,
$\supp(\psi_0)\subset\{x:|x|\leq 2\}$, $\psi_0(x)=1$ on $\{x: |x|\leq 1\}$ and
\begin{equation*}
|\partial^\alpha  \psi_j(x)|\leq  C_\alpha  2^{-|\alpha|j},
\end{equation*}
where the constant $C_\alpha$ does not depend on $j$.

We restrict ourselves to the case $p=2$ and denote the Bessel
potential spaces by $H^s$ with the norm given by
\begin{equation*}
  \|u\|_{H^s}^2=\int(1+|\xi|^2)^s |\hat{u}(\xi)|^2d\xi,
\end{equation*}
where $\hat{u}$ is the Fourier transform of $u$.
\begin{defn}
  \label{def:weighted:3}
  For $s,\delta\in\setR$,
  \begin{equation}
    \label{eq:weighted:4}
    \left(\|u\|_{H_{s,\delta}}\right)^2=
    \sum_{j=0}^\infty  2^{( \frac{3}{2} + \delta)2j} \| (\psi_j
    u)_{(2^j)}\|_{H^{s}}^{2},
  \end{equation}
  where $f_\varepsilon(x)=f(\varepsilon x)$ denotes the scaling by a positive number $\epsilon$.
  The space $H_{s,\delta}$ is the set of all 
  tempered distributions having a finite norm given by
  (\ref{eq:weighted:4}).
\end{defn}
The $H_{s,\delta}$-norm of a distribution $u$ in an open set
$\Omega\subset\setR^3$ is given by
\begin{equation*}
 \left\|u \right\|_{H_{s,\delta}(\Omega)}=\inf\limits_{f{_{\mid_\Omega}}=u}
\left\|f \right\|_{H_{s,\delta}(\setR^3)}.
\end{equation*}
\begin{defn}
\label{def:asymp}
Let $\mathcal{M}$ be a $3$ dimensional smooth connected manifold and let
$h$ be a metric on $\mathcal{M}$ such that $(\mathcal{M},h)$ is
complete. We say that $(\mathcal{M},h)$ is \textbf{asymptotically
  flat} of the class $H_{s,\delta}$ if $h\in H^s_{\rm loc}(\mathcal{M})$ and
there is a compact set $S\subset \mathcal{M}$ such that:
\begin{enumerate}
  \item[1.] There is a finite collection of charts $\{(U_i,\varphi_i)\}_{i=1}^N$
  which covers $\mathcal{M}\setminus S$;
  \item[2.] For each $i$, $\varphi_i^{-1}(U_i)=E_{r_i}:=\{x\in \setR^3:
|x|>r_i\}$ for
  some positive $r_i$;
  \item[3.]
  \label{item:3}
  The pull-back $(\varphi_{i \ast} h )_{ab}$ is uniformly equivalent to the
  Euclidean metric $\delta_{ab}$ on $E_{r_i}$ for each $i$;
  \item[4.]
  \label{item:4}
  For each $i$, $(\varphi_{i \ast} h )_{ab}-\delta_{ab}\in
H_{s,\delta}(E_{r_i})$.
\end{enumerate}
\end{defn}
 
The $H_{s,\delta}$-norm on the manifold $\mathcal{M}$ is defined as
follows.
Let $U_0\subset\mathcal{M}$ be an open set such that $S\subset U_0$ and
$\overline{U_0}\Subset\mathcal{M}$.
Let $\{\chi_0,\chi_i\}$ be a partition of unity subordinate to $\{U_0,U_i\}$,
then
\begin{equation}
\label{eq:norm-M}
 \left\|u \right\|_{H_{s,\delta}(\mathcal{M})}:= \left\|\chi_0u
\right\|_{H^{s}(U_0)}+\sum_{i=1}^N\left\|\varphi_i^\ast(\chi_i u)
\right\|_{H_{s,\delta}(\setR^3)}
\end{equation}
is the norm of the weighted fractional Sobolev space
$H_{s,\delta}(\mathcal{M})$.
For the definition of the norm $\left\|\chi_0u \right\|_{H^{s}(\Omega)}$ on the
manifold $\mathcal{M}$ see e.g.~\cite{Aubin98}.
Note that the norm (\ref{eq:norm-M}) depends on the partition of
unity, but different partitions of unity result in equivalent norms.
In the following we will omit the notation $\mathcal{M}$, that is, we
will write $ \|u\|_{H_{s,\delta}}$ instead of $\|u\|_{H_{s,\delta}(\mathcal{M})}$.

Since the principal symbol of the field equations (\ref{eq:eineul:1})
is characteristic in every direction (see e.g.~\cite{DeTurck}), it is
impossible to solve (\ref{eq:eineul:1}) in the present form.
We study these equations under the {\it harmonic gauge condition}
\begin{equation}
\label{eq:9}
 F^\mu=g^{\beta\gamma}\Gamma_{\beta\gamma}^\mu=0, 
\end{equation}
where $g^{\alpha\beta}$ is the inverse matrix of $g_{\alpha\beta}$. Then the
field equations (\ref{eq:eineul:1}) are equivalent to the {\it
reduced Einstein equations}
\begin{equation}
  \label{eq:publ-broken:18}
  g^{\mu\nu}\partial_\mu \partial_\nu g_{\alpha\beta}=
  H_{\alpha\beta}({g},\partial{g})-16\pi T_{\alpha\beta}+8\pi
  g^{\mu\nu}T_{\mu\nu}g_{\alpha\beta},
\end{equation}
where $H_{\alpha\beta}({g},\partial{g})$ is a quadratic function of
the semi--metric $g_{\alpha\beta}$ and its first order derivatives.
Since $g^{\mu\nu}$ has a Lorentzian signature,
(\ref{eq:publ-broken:18}) is a system of quasi-linear wave equations.
Taking into account the equation of state (\ref{eq:eineul:4}), the
normalization condition (\ref{eq:eineul:2}), and the Makino variable
(\ref{eq:intro:11}), then the system of wave equations (\ref{eq:publ-broken:18})
become
\begin{equation}
  \label{eq:publ-broken:11}
  g^{\mu\nu}\partial_\mu \partial_\nu g_{\alpha\beta}=
  H_{\alpha\beta}({g},\partial{g})-8\pi w^{\frac{2}{\gamma-1}}\left((1-Kw^2)
g_{\alpha\beta}+2(1+Kw^2)u_{\alpha}u_{\beta}\right).
\end{equation}
   
So the unknowns of the system (\ref{eq:publ-broken:11}) coupled with
the Euler equations (\ref{eq:eineul:3}) are the semi--metric
$g_{\alpha\beta}$, the velocity vector $u^\alpha$ and the Makino
variable $w$.
Note that even if $w$ is a smooth function, $w^{\frac{2}{\gamma-1}}$
might not be smooth in certain regions.
The initial data consist of the triple $(\mathcal{M},h_{ab},K_{ab})$,
where $\mathcal{M}$ is a space-like manifold,
$h_{ab}$ is a proper Riemannian metric on
$\mathcal{M}$ and $K_{ab}$ is its second fundamental form (extrinsic
curvature).

The semi--metric $g_{\alpha\beta}$ takes the following data on $M$:
\begin{equation}
\label{eq:2}
 \left\{\begin{array}{c}
g_{00}{\mid_{\mathcal{M}}}=-1 ,\quad g_{0a}{\mid_{\mathcal{M}}}=0,
\quad g_{ab}{\mid_{\mathcal{M}}}=h_{ab} \\
 -\frac{1}{2}\partial_0 g_{ab}{\mid_{\mathcal{M}}}=K_{ab},   \\
\end{array}\quad a,b=1,2,3.\right.
\end{equation}

The remaining initial data for
$\partial_0{g_{\alpha0}}_{\mid_{\mathcal{M}}}$ are
determined through the harmonic condition $F^\mu=0$. We compute them
by inserting the initial data (\ref{eq:2}) in the harmonic gauge
condition (\ref{eq:9}).
Since $\protect \partial_a
g_{00}{\mid_{\mathcal{M}}}=\partial_ag_{b0}{\mid_{\mathcal{M}}}=0$,
this results in the following expressions for
$\partial_0{g_{\alpha0}}_{\mid_{\mathcal{M}}}$:
\begin{equation}
\label{eq:2.7}
 \left\{\begin{array}{lll}
  \partial_0{ g_{00}}_{\mid_{\mathcal{M}}} & = &2 h^{ab}(K_{ab})      \\
\partial_0{g_{0c}}_{\mid_{\mathcal{M}}}& = &\frac{1}{2}
 \left(h^{ab}(\partial_a\protect h_{bc}-\partial_c
h_{ab})\right).
\end{array}
\right.
\end{equation}
In addition, the initial data of the velocity vector $u^\alpha$ and the
Makino variable $w$ are given on $\mathcal{M}$.
We denote the Minikowski metric by $\eta_{\alpha\beta}$.
\begin{thm}[Main result]
  \label{thm:main}
  Let $\frac{3}{2}<s<\frac{2}{\gamma-1}+\frac{1}{2}$ and
  $-\frac{3}{2}\leq \delta$.
  Assume $\mathcal{M}$ is asymptotically flat of class
  $H_{s+1,\delta}$, $K_{ab}\in H_{s,\delta+1}$,
  $\left(u^0-1,u^a,w\right){\big|_\mathcal{M}}\in H_{s+1,\delta+{1}}$,
  $w(0)\geq 0$ and $u^\alpha(0)$ is a timelike vector.
  Then there exists a positive $T$, a unique semi--metric
  $g_{\alpha\beta}$, a unit timelike vector $u^\alpha$ and $w$
  satisfying the reduced Einstein equations (\ref{eq:publ-broken:11})
  and the Euler equations (\ref{eq:eineul:3}) such that
  \begin{equation}
    \label{eq:2.13}
    ({g}_{\alpha\beta}(t)-\eta_{\alpha\beta})\in C([0,T],H_{s+1,\delta})\cap
    C^1([0,T],H_{s,\delta+1})
  \end{equation}
  and
  \begin{equation}
    \label{eq:2.14}
    \left(u^0-1,u^a,w\right)\in  C([0,T],H_{s+1,\delta+1})\cap
    C^1([0,T],H_{s,\delta+2}).
  \end{equation}
\end{thm}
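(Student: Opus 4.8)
The strategy is to recast the coupled system as a single first order quasi--linear symmetric hyperbolic system, to establish weighted energy estimates for its linearisation, and then to run Majda's iteration scheme, paying special attention to the fractional power $w^{\frac{2}{\gamma-1}}$.

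First I would carry out the hyperbolic reduction of Section~\ref{sec:init-value-probl}. Using the Makino variable (\ref{eq:intro:11}) together with the fluid decomposition, the Euler equations (\ref{eq:eineul:3}) are written as a uniformly first order symmetric hyperbolic system for $(u^\alpha,w)$ whose coefficients remain regular as $w\to 0$. The reduced Einstein equations (\ref{eq:publ-broken:11}) are of second order, but because the principal coefficients $g^{\mu\nu}$ depend on $g_{\alpha\beta}$ only and not on $\partial g$, one introduces the first order variables $(g_{\alpha\beta},\partial_t g_{\alpha\beta},\partial_a g_{\alpha\beta})$ and, following the device of \cite{Ka1} recalled in the Appendix, obtains an equivalent first order symmetric hyperbolic system that requires only the index $s>\frac{3}{2}$ rather than $s>\frac{5}{2}$. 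Collecting all unknowns into a single vector $U$, the problem takes the form
\begin{equation*}
 A^0(U)\,\partial_t U+\sum_a A^a(U)\,\partial_a U=B(U),
\end{equation*}
with $A^0$ symmetric positive definite, the $A^a$ symmetric, and the source $B(U)$ carrying the term $w^{\frac{2}{\gamma-1}}\big((1-Kw^2)g_{\alpha\beta}+2(1+Kw^2)u_\alpha u_\beta\big)$ from (\ref{eq:publ-broken:11}).

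The iteration is then run in the product $H_{s,\delta}$-spaces of Section~\ref{sec:properties}, with the weights matched to the conclusion (\ref{eq:2.13})--(\ref{eq:2.14}): the metric component at level $H_{s+1,\delta}$, its first derivatives at level $H_{s,\delta+1}$, and the fluid variables at level $H_{s+1,\delta+1}$. The algebra and Moser--type properties of these spaces for $s>\frac{3}{2}$, together with the composition estimate $\|w^{\frac{2}{\gamma-1}}\|_{H_{s,\delta'}}\le C(\|w\|_{L^\infty})\,\|w\|_{H_{s,\delta''}}$ --- which is exactly where the upper bound $s<\frac{2}{\gamma-1}+\frac{1}{2}$ is forced, cf.~(\ref{eq:intro:1}) and \cite{runst96:_sobol_spaces_fract_order_nemyt} --- control every nonlinear term of $B(U)$. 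For the linearised system one equips the product space with the inner product adapted to $A^0(U)$, as in Section~\ref{sec:energy-estimates}, and proves
\begin{equation*}
 \|U(t)\|_{s,\delta}\le C\,e^{Ct}\Big(\|U(0)\|_{s,\delta}+\int_0^t\|F(\tau)\|_{s,\delta}\,d\tau\Big)
\end{equation*}
from the symmetry of the $A^a$, a coercivity estimate, and commutator bounds in the weighted norm.

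The step requiring the most care, and the main obstacle, appears in Majda's scheme \cite{majda84:_compr_fluid_flow_system_conser}: defining $U^{(k)}$ as the solution of the linear system with coefficients frozen at $U^{(k-1)}$ uses that a linear symmetric hyperbolic system with $C_0^\infty$ coefficients and data has a $C_0^\infty$ solution, whereas $w^{\frac{2}{\gamma-1}}$ need not be $C^\infty$ even when $w$ is. As indicated in the Introduction, I would remove this by adjoining $\epsilon=w^{\frac{2}{\gamma-1}}$ as an independent unknown: $\epsilon$ satisfies a first order linear equation along the fluid flow, so it can be propagated with smooth data inside the iteration and kept consistent with the equation of state, which keeps every iterate in the smooth category. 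Showing $U^{(k)}$ is bounded in the high norm and Cauchy in a lower norm (both from the energy estimate above) and passing to the limit produces the solution $U$, continuous in $t$ into the product space of Section~\ref{sec:properties} and of class $C^1$ into the corresponding space with regularity index lowered by one; reading off its components gives (\ref{eq:2.13}) and (\ref{eq:2.14}), while $w\ge 0$ and the timelike character of $u^\alpha$ persist on a short interval by continuity. Uniqueness and continuous dependence on the initial data then follow by applying the same energy estimate to the difference of two solutions.
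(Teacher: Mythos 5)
Your plan follows the paper's proof essentially step for step: the Makino-variable fluid decomposition, the first-order reduction exploiting that the principal coefficients depend only on $g_{\alpha\beta}$ (the device of \cite{Ka1}), the $\mathcal{A}^0$-adapted inner product for the weighted energy estimates, Majda's iteration with high-norm boundedness and low-norm contraction, and in particular the key observation that $\epsilon=w^{\frac{2}{\gamma-1}}$ obeys a first order linear transport equation — which the paper uses inside each iterate (solving it by characteristics, after first approximating the initial data so that $(w_0^k)^{\frac{2}{\gamma-1}}\in C_0^\infty$) to keep the iterates smooth. The only cosmetic difference is that you phrase this as adjoining $\epsilon$ as an extra unknown, whereas the paper derives the transport equation for $\epsilon^{k+1}$ directly from the linearized continuity equation; the substance is the same.
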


\begin{rem}[On the differentiability]
  \label{rem:sec2-main-result:2}
Note that we have a lower and an upper bound of the differentiability index
$s$, however, in case $\frac{2}{\gamma-1}$ is an integer, then there is no upper
bound.
\end{rem}

A necessary and sufficient condition for the equivalence between the reduced
Einstein equations (\ref{eq:publ-broken:11}) and the field equations
(\ref{eq:eineul:1}) is that the geometric data $(h,K_{ab})$ satisfy the
constraint equations 
 \begin{equation}
    \label{eq:intro:6}
    \left\{
      \begin{array}{ccc}
        R(h) - K_{ab}K^{ab}+(h^{ab}K_{ab})^2 &=& 16\pi z\\
        {}^{(3)}\nabla_b K^{ab}-
        {}^{(3)}\nabla^b(h^{bc}K_{bc}) &=&-8\pi j^a.
      \end{array}\right.
  \end{equation}
  Here $R(h)=h^{ab}R_{ab}$ is the scalar curvature with respect to the
  metric $h_{ab}$.
  The right hand-side $(z,j^a)$ consists of the energy density and the
  momentum density respectively.

  Note that the harmonic coordinates  $F^\mu$ satisfy a
  homogeneous wave equation.
  That is why $F^\mu \equiv 0$, if $F^\mu{\mid{_\mathcal{M}}}=0$ and
  $\partial_0F^\mu{\mid{_\mathcal{M}}}=0$.
  The first condition follows from (\ref{eq:2.7}), and the second 
  holds if the reduced Einstein equations (\ref{eq:publ-broken:11})
  and the constraint equations (\ref{eq:intro:6}) are satisfied
  \cite{CHY}, \cite[\S18.8]{taylor97c}, \cite[\S
  10.2]{wald84:_gener_relat}.
  Although some of these references concern only the vacuum equations,
  since the proof relies on the Bianchi identities, it is valid for the
  Einstein-Euler equations, whose energy-momentum tensor
  $T^{\alpha\beta} $ is divergence free.

  Thus solving the constraint equations (\ref{eq:intro:6}) ensures
  that the solution of (\ref{eq:publ-broken:11}) satisfies the
  original system (\ref{eq:eineul:1}).
  However, before we solve the constraints, we need to treat the
  compatibility problem between the matter variables $(z,j^a)$ and the
  initial data for the velocity $u^\alpha$ and the Makino variable
  $w$.

  This problem can be described as follows: Let $\bar u^\alpha$ denote
  the projection of the velocity vector $u^\alpha$ on the initial
  manifold $\mathcal{M}$ and $n^\alpha$ the timelike unit normal
  vector to $\mathcal{M}$.
  The energy density $z$ is the double projection of $T_{\alpha\beta}$
  on $n^\alpha$ and the momentum density $j^a$ is once the projection
  of $T_{\alpha\beta}$ on $n^\alpha$ and once on $\mathcal{M}$.
  Applying these projections to the perfect fluid (\ref{eq:eineul:2})
  results in
  \begin{equation}
  \label{eq:intro:7}
  \left\{
    \begin{array}{ccc}
      z &=
&w^{\frac{2}{\gamma-1}}\left(1+(1+Kw^2)\right)h_{ab}\bar{u}^a\bar{u}^b\\
      j^a
      &=&w^{\frac{2}{\gamma-1}}\left(1+Kw^2\right)\bar{u}^a
\sqrt{1+h_{bc}\bar{u}^b\bar{u}^c}
    \end{array}\right..
\end{equation}

So the compatibility problem consists of solving (\ref{eq:intro:7})
for $w$, $\bar {u}^{a}$, when $z$ and $j^a$ are given.
This problem combined with a solution to the constraint equations
(\ref{eq:intro:6}) was solved in the $H_{s,\delta}$-spaces in
\cite[Theorem 2.6]{BK3}.
The conditions for this result are that
$\frac{3}{2}<s<\frac{2}{\gamma-1}+\frac{1}{2}$, where the metric
$h_{ab}-\delta_{ab}\in H_{s+1,\delta}$ with
$\delta\in(-\frac{3}{2},-\frac{1}{2})$, while for the mater variables
$(z,j^a)\in H_{s,\delta}$ and $\delta$ is just bounded below by
$-\frac{3}{2}$.
Note that for the hyperbolic equations we need one more degree of
regularity, so we need to require that $(z,j^a)\in H_{s+1,\delta+1}$.
But then the Makino variable (\ref{eq:intro:11}) causes that the upper
bound for $s$ becomes $\frac{2}{\gamma-1}-\frac{1}{2}$.
Given this restriction, we have by Theorem 2.5 of \cite{BK3} and
Proposition \ref{Moser} below that
$(w,u^0-1,u^a){\big|_{\mathcal{M}}}\in H_{s+1,\delta+1}$.

Thus relying on \cite{BK3}, we conclude that there is an initial data
set $(h_{ab},K_{ab})$ and $(w,u^\alpha)$ belonging to the
$H_{s,\delta}$-spaces that satisfies both the constraints
(\ref{eq:intro:6}) and the compatibility problem (\ref{eq:intro:7}).
The parameter $\gamma$ of these initial data, however, belongs to the
interval $(1,2)$.

\begin{cor}
  Under the assumptions of Theorem \ref{thm:main} and in addition
  under the assumption that the initial data $(h_{ab},K_{ab})$ and
  $(w,u^\alpha)$ satisfy the constraint equations (\ref{eq:intro:6})
  and compatibility problem (\ref{eq:intro:7}), there exists a
  positive $T$, a semi--metric $g$, a unit timelike vector $u^\alpha$
  and $w$ satisfying the Einstein (\ref{eq:eineul:1}) and the Euler
  equations (\ref{eq:eineul:3}) for $t\in[0,T]$.
  The regularity of $g$, $u^\alpha$ and $w$ are the same as in Theorem
  \ref{thm:main}.
\end{cor}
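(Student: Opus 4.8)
The plan is to deduce the Corollary from Theorem~\ref{thm:main} by the classical three step scheme for the Cauchy problem of the Einstein equations; since Theorem~\ref{thm:main} already supplies the solution of the reduced system, the only genuine work is the propagation of the harmonic gauge. First I would apply Theorem~\ref{thm:main} to the given data $(h_{ab},K_{ab})$ and $\left(u^0-1,u^a,w\right)\big|_{\mathcal{M}}$, which by hypothesis satisfy its regularity assumptions and in addition solve (\ref{eq:intro:6}) and (\ref{eq:intro:7}). This produces $T>0$, a semi--metric $g_{\alpha\beta}$, a unit timelike $u^\alpha$ and $w$ solving the reduced Einstein equations (\ref{eq:publ-broken:11}) together with the Euler equations (\ref{eq:eineul:3}) on $[0,T]$, with the regularity (\ref{eq:2.13})--(\ref{eq:2.14}).

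Next I would show that the harmonic gauge vector $F^\mu=g^{\beta\gamma}\Gamma_{\beta\gamma}^\mu$ of (\ref{eq:9}) vanishes identically on $[0,T]$. Feeding the reduced equations (\ref{eq:publ-broken:18}) into the contracted Bianchi identity $\nabla_\alpha G^{\alpha\beta}=0$ and using that the energy--momentum tensor is divergence free by (\ref{eq:eineul:3}) shows, as in Fischer and Marsden~\cite{FMA} and~\cite[\S10.2]{wald84:_gener_relat}, that $F^\mu$ satisfies a linear homogeneous second order hyperbolic equation with coefficients depending only on $g$ and $\partial g$. By uniqueness for that equation it suffices to check $F^\mu\big|_{\mathcal{M}}=0$ and $\partial_0 F^\mu\big|_{\mathcal{M}}=0$. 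The first holds because the data for $g_{\alpha\beta}$ were prescribed in (\ref{eq:2})--(\ref{eq:2.7}) precisely so that $F^\mu$ vanishes on $\mathcal{M}$; the second is where the constraint equations (\ref{eq:intro:6}) enter, as combining them with the reduced equations restricted to $\mathcal{M}$ forces $\partial_0 F^\mu\big|_{\mathcal{M}}=0$. Here I would only need to verify that the weighted fractional regularity of $g$ in (\ref{eq:2.13}) is enough for the wave equation for $F^\mu$ to be well posed and its solution unique, which follows from the product and Moser type estimates of Section~\ref{sec:properties} (in particular Proposition~\ref{Moser}).

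Once $F^\mu\equiv 0$ on $[0,T]$, the reduced system (\ref{eq:publ-broken:18}) coincides with the full field equations (\ref{eq:eineul:1}); restoring the equation of state (\ref{eq:eineul:4}), the normalization (\ref{eq:publ-broken:2}) and the Makino variable (\ref{eq:intro:11}), that is, setting $\epsilon=w^{\frac{2}{\gamma-1}}$, and using that (\ref{eq:eineul:3}) already holds, we obtain a solution of the Einstein--Euler system (\ref{eq:eineul:1}), (\ref{eq:eineul:2}), (\ref{eq:eineul:3}), (\ref{eq:eineul:4}) on $[0,T]$; the compatibility relations (\ref{eq:intro:7}) are what guarantee that the matter source $(z,j^a)$ in the constraints (\ref{eq:intro:6}) agrees with the fluid data $(w,\bar u^a)$ carried along by Theorem~\ref{thm:main}, so that the geometric and fluid initial data are consistent. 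The regularity of $g$, $u^\alpha$ and $w$ is exactly (\ref{eq:2.13})--(\ref{eq:2.14}), since these are the very objects delivered by Theorem~\ref{thm:main}. The main obstacle is the propagation step, and within it the only subtlety is keeping track of the weighted fractional Sobolev regularity so that the homogeneous wave equation for $F^\mu$ is uniquely solvable; the remaining part is the classical Bianchi identity computation carried out in detail in~\cite{FMA}.
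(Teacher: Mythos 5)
Your proposal is correct and follows essentially the same route as the paper: apply Theorem \ref{thm:main} to obtain the solution of the reduced system, then propagate the harmonic gauge via the homogeneous wave equation for $F^\mu$ (using the Bianchi identities and the divergence-free energy--momentum tensor), with $F^\mu\big|_{\mathcal{M}}=0$ coming from (\ref{eq:2.7}) and $\partial_0F^\mu\big|_{\mathcal{M}}=0$ from the constraints (\ref{eq:intro:6}). The paper itself only sketches this step by citing \cite{FMA}, \cite{CHY} and \cite{wald84:_gener_relat}, so your slightly more explicit outline is entirely consistent with its argument.
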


\begin{rem}[Existence, Uniqueness and Regularity]
  \label{rem:section-2:2}
  Existence, uniqueness and regularity of solutions of the Einstein
  equations (\ref{eq:eineul:1}) hold relative to the harmonic
  coordinate condition (\ref{eq:9}).
  Geometrical uniqueness requires usually one degree more of
  differentiability
  \cite{FMA
  }.
  Planchon and Rodnianski
  \cite{Planchon_Rodnianski_07
  }(see also \cite{CGP_08}) gave an argument for the vacuum case
  to get rid of this additionally regularity.
  For the Einstein--Euler system, and other matter fields, however,
  this problem remains still open.
\end{rem}

\section{Symmetric Hyperbolic Systems}
\label{sec:init-value-probl}
The main result is proved by transforming the coupled system
(\ref{eq:publ-broken:11}) and (\ref{eq:eineul:3}) into a symmetric
hyperbolic system.
We therefore recall its definition.
\begin{defn}[Symmetric hyperbolic system]
  \label{def:euler-rel:1}
  A first order quasi--linear $k\times k$ system is {\it symmetric
    hyperbolic system} in a region $G\subset \setR^k$, if it is of the
  form
  \begin{equation}
    \label{eq:publ-broken:7}
    L\lbrack   U \rbrack      =     
    A^\alpha (U)\partial_\alpha U  +
    B(U) = 0,
  \end{equation}
  where the matrices $A^\alpha(U) $ are symmetric and for every arbitrary
  $U\in G $, there exists a covector $\xi$ such that
  \begin{equation}
    \label{eq:publ-broken:8}
    \xi_\alpha   A^\alpha (U)
\end{equation} 
is  positive   definite.
The covectors $\xi_\alpha$ for which (\ref{eq:publ-broken:8}) is
positive definite, are called {\it timelike with respect to equation
}(\ref{eq:publ-broken:7}).
\end{defn}
If $\xi$  can be chosen to be the vector $(1,0,0,0)$, then 
 condition (\ref{eq:publ-broken:8}) implies that the matrix $A^0(U)$
is a positive definite matrix, and we may write system (\ref{eq:publ-broken:7})
in the form
\begin{equation}
\label{eq:symm}
 A^0(U)\partial_t U=A^a(U)\partial_a U+B(U).
\end{equation} 
\subsection{The Euler equations written as a symmetric hyperbolic
  system}
\label{sec:euler-equat-writt}
It is not obvious that the Euler equations written in the conservative
form $\nabla_{\alpha}T^{\alpha\beta}=0$ are symmetric hyperbolic.
In fact these equations have to be transformed in order to be
expressed in a symmetric hyperbolic form.
Rendall presented such a transformation of these equations in
\cite{Rendall92:-fluid}, however, its geometrical meaning is not
entirely clear and it might be difficult to generalize it to the non
time symmetric case.
Hence we will present a different hyperbolic reduction of the Euler
equations and discuss it in some details, for we have not seen it
anywhere in the literature.

The basic idea is to perform the standard \textit{fluid decomposition}
and then to modify the equation by adding, in an appropriate manner,
the normalization condition (\ref{eq:publ-broken:2}) which will be
considered as a constraint equation.
The fluid decomposition method consists of the projection of equation
$ \nabla_{\nu}T^{\nu\beta} = 0 $ onto $u^{\alpha}$ which leads to
\begin{math}
    u_\beta\nabla_{\nu}T^{\nu\beta} =  0
    \label{kap3.flp20}
  \end{math}, and the projection of these equations on rest pace
${\mathcal O}$ orthogonal to $u^\alpha$ of a fluid
which leads to
  \begin{math}
    P_{\alpha\beta}\nabla_{\nu}T^{\nu\beta} = 0 \end{math}, where
$ P_{\alpha\beta}= g_{\alpha\beta}+u_\alpha u_\beta$.
Inserting this decomposition into (\ref{eq:eineul:2}) results in a
system of the following form: 
\begin{subequations}
  \begin{eqnarray}
    \label{eq:euler-rel:10}
    u^{\nu}\nabla_{\nu}\epsilon + (\epsilon+p) \nabla_{\nu}u^{\nu}
    &=& 0; \\[0.2cm]
    \label{kap3.flz3}
    (\epsilon+p)   P_{\alpha\beta}u^{\nu}\nabla_{\nu}{u^\beta} +
P^\nu{}_\alpha
    \nabla_\nu p     &=&
    0.
  \end{eqnarray}
\end{subequations}

Note that we have beside the evolution equations
(\ref{eq:euler-rel:10}) and (\ref{kap3.flz3}) the following constraint
equation: $g_{\alpha\beta}u^{\alpha}u^{\beta}=-1$.
We will show in Subsection \ref{sec:cons-constr-equat} that this
constraint equation is conserved under the evolution equation.
In order to obtain a symmetric hyperbolic system we have to modify it
in the following way.
The normalization condition (\ref{eq:publ-broken:2}) gives that $
u_\beta u^\nu\nabla_\nu u^\beta = 0$, so we add $(\epsilon+p) u_\beta
u^\nu \nabla_\nu u^\beta = 0$ to equation (\ref{eq:euler-rel:10}) and
$ u_\alpha u_\beta u^\nu\nabla_\nu u^\beta = 0$ to (\ref{kap3.flz3}),
which results in
\begin{subequations}
  \begin{eqnarray}
    \label{eq:eineul:8}
    u^{\nu}\nabla_{\nu} \epsilon + (\epsilon+p)
P^\nu{}_\beta\nabla_\nu
u^\beta&=& 0 \\
    \label{eq:publ-broken:3} \Gamma_{\alpha\beta}
    u^{\nu}\nabla_{\nu}u^{\beta} +\frac{\sigma^2}{(\epsilon+p)}
P^\nu{}_\alpha
    \nabla_\nu \epsilon  &=& 0,
  \end{eqnarray}
\end{subequations}
where $\sigma:=\sqrt{\frac{\partial p}{\partial \epsilon }}$ is the speed of
sound and
\begin{equation*}
 \Gamma_{\alpha\beta}=
P_{\alpha\beta}+u_{\alpha}u_{\beta}= g_{\alpha\beta}+2u_{\alpha}u_{\beta}
\end{equation*}
is a reflection with respect to the rest subspace $\mathcal{O}$.
As mentioned above, we will introduce a new matter variable which is
given by (\ref{eq:intro:11}).
The idea  behind  is the following: The system
(\ref{eq:eineul:8}) and (\ref{eq:publ-broken:3}) is almost of
symmetric hyperbolic form, it is  symmetric if we multiply the
system by appropriate factors, for example, (\ref{eq:eineul:8}) by $
\frac{\partial p}{\partial \epsilon }=\sigma^2$ and
(\ref{eq:publ-broken:3}) by $(\epsilon+p)$.
However, doing so we will be faced with a system in which the
coefficients will either tend to zero or to infinity, as $\epsilon\to
0$.
Hence, it is impossible to represent this system in a non-degenerate
form using these multiplications.

The central point is now to introduce a new variable $w=M(\epsilon)$
which will regularize the equations even for $\epsilon=0$.
We do this by multiplying equation (\ref{eq:eineul:8}) by $\kappa^2
M'=\kappa^2\frac{\partial M}{\partial\epsilon}$.
This results in the following system which we have written in  matrix
form:
\begin{equation}
  \label{eq:Initial:1}
  \left(
    \begin{array}{c|cccc}
      \kappa^2u^\nu    &  & \kappa^{2}(\epsilon+p) M^{\prime}
      P^{\nu}{}_{\beta}  &     \\ \hline
      &  &                     &  &  \\
      \frac{\sigma^2}{(\epsilon+p) M^{\prime}}  P^{\nu}{}_{\alpha}
      &  & \Gamma_{\alpha\beta} u^\nu       &     \\
      &  &                     &  &  \\
    \end{array}
  \right)\nabla_\nu\left(
    \begin{array}{c}
      w \\
      u^\beta \\
    \end{array}
  \right)=\left(
    \begin{array}{c}
      0 \\
      0 \\
    \end{array}
  \right).
\end{equation}

In order to obtain symmetry we have to demand that
\begin{equation}
  \label{eq:Initial:6}
  M'=\frac{\sigma}{(\epsilon+p) \kappa},
\end{equation}
where ${\kappa} \gg0$ has been introduced in order to simplify the
expression for $w$.
If we choose $\kappa=\frac{2}{\gamma-1}
\frac{\sqrt{K\gamma}}{1+K\epsilon^{\gamma-1}}$, then
(\ref{eq:Initial:6}) holds and consequently the system
(\ref{eq:Initial:1}) is transferred to the symmetric system
\begin{equation}
  \label{eq:Initial:2} \left(
    \begin{array}{c|cccc}
      \kappa^2 u^\nu    &  & \sigma\kappa P^{\nu}{}_{\beta}  &     \\
\hline
      &  &                     &  &  \\
      \kappa\sigma   P^{\nu}{}_{\alpha}
      &  & \Gamma_{\alpha\beta} u^\nu       &     \\
      &  &                     &  &  \\
    \end{array}
  \right)\nabla_\nu\left(%
    \begin{array}{c}
      w \\
      u^\beta \\
    \end{array}%
  \right)=\left(%
    \begin{array}{c}
      0 \\
      0 \\
    \end{array}%
  \right).
\end{equation}

The covariant derivative $\nabla_{\nu}$ takes in local coordinates the
form $\nabla_{\nu} = \partial_{\nu} + \Gamma(g^{\gamma\delta},\partial
g_{\alpha\beta})$, where $\Gamma$ denotes the Christoffel symbols.
This expresses the fact that system
(\ref{eq:Initial:2}) is coupled to system
(\ref{eq:eineul:1}) for the gravitational field $g_{\alpha\beta}$.
In addition, from the definition of the Makino variable
(\ref{eq:intro:11}), we see that $\epsilon^{\gamma-1}=w^2$, so
$\kappa=\frac{2}{\gamma-1} \frac{\sqrt{K\gamma}}{1+Kw^2}$ and
$\sigma=\sqrt{\gamma K}w$.
Thus the fractional power of the equation of state (\ref{eq:eineul:4})
does not appear in the coefficients of the system
(\ref{eq:Initial:2}), and these coefficients are $C^\infty$ functions
of the scalar $w$, the four vector $u^\alpha$ and the gravitational
field $g_{\alpha\beta}$.

Now we want to show that $A^0$ of our system (\ref{eq:Initial:2}) is
indeed positive definite.
In order to do it we analyze the principal symbol of this system.
For each $\xi_\alpha\in T_x^*V$, the principal symbol is a linear map
from $\mathbb{R}\times E_x$ to $\mathbb{R}\times F_x$, where $E_x$ is
a fiber in $T_xV$ and $F_x$ is a fiber in the cotangent space
$T_x^*V$.
Since in local coordinates $\nabla_{\nu} =\partial_{\nu} +
\Gamma(g^{\gamma\delta},\partial g_{\alpha\beta})$, the principal
symbol of system (\ref{eq:Initial:2}) is
\begin{equation}
  \label{eq:Initial:4}\xi_\nu A^\nu= \left(
    \begin{array}{c|cccc}
      \kappa^2    (u^\nu \xi_\nu)   &  & \sigma\kappa
      P^{\nu}{}_{\beta}\xi_\nu  &     \\ \hline
      &  &                     &  &  \\
      \sigma\kappa   P^{\nu}{}_{\alpha}\xi_\nu
      &  & (u^\nu\xi_\nu)\Gamma_{\alpha\beta}        &     \\
      &  &                     &  &  \\
    \end{array}
  \right).
\end{equation}
The characteristics are the set of covectors $\xi$ for which $(\xi_\nu
A^\nu)$ is not an isomorphism.
Hence the characteristics are the zeros of $Q(\xi):=\det(\xi_\nu
A^\nu)$.
The geometric advantages of the fluid decomposition are the following.
The operators in the blocks of the matrix (\ref{eq:Initial:4}) are the
projection on the rest hyperplane $\mathcal{O}$, $P^{\nu}{}_{\alpha}$,
and the reflection with respect to the same hyperplane, $
\Gamma_{\alpha\beta}$.
Therefore, the following relations hold:
\begin{displaymath}
  \Gamma^{\alpha\gamma}\Gamma_{\gamma\beta}=\delta_\beta{}^\alpha,
  \qquad \Gamma^{\alpha\gamma}P_\gamma{}^\nu=P^{\alpha\nu}\qquad
  \text{and}\qquad P_\beta{}^\alpha P_{\alpha}{}^\nu=P^{\nu}{}_\beta,
\end{displaymath}
which yields
\begin{equation}
  \label{eq:Initial:5}\left(
    \begin{array}{c|cccc}
      1   &  & 0  &     \\ \hline
      &  &                     &  &  \\
      0
      &  &\Gamma^{\alpha\gamma}         &     \\
      &  &                     &  &  \\
    \end{array}
  \right)
  \left(\xi_\nu A^\nu\right)= \left(
    \begin{array}{c|cccc}
      \kappa^2          (u^\nu \xi_\nu)   &  & \sigma\kappa
      P^{\nu}{}_{\beta}\xi_\nu  &     \\ \hline
      &  &                     &  &  \\
      \sigma\kappa   P^{\alpha\nu}{}\xi_\nu &
      &(u^\nu\xi_\nu)\left(\delta^{\alpha}_{\beta} \right)       &
\\
      &  &                     &  &  \\
    \end{array}
  \right).
\end{equation}

It is now fairly easy to calculate the determinate of the right hand
side of (\ref{eq:Initial:5}) and we have
\begin{displaymath}
  \det\left(
    \begin{array}{c|cccc}
      \kappa^2      (u^\nu \xi_\nu)   &  & \sigma\kappa
      P^{\nu}{}_{\beta}\xi_\nu  &     \\ \hline
      &  &                     &  &  \\
      \sigma\kappa   P^{\alpha\nu}{}\xi_\nu &
      &(u^\nu\xi_\nu)\left(\delta^{\alpha}_{\beta} \right)       &
\\
      &  &                     &  &  \\
    \end{array}
  \right)
  =\kappa^2(u^\nu\xi_\nu)^3\left((u^\nu\xi_\nu)^2 -\sigma^2
    P^{\alpha\nu}\xi_\nu P_\alpha ^\nu\xi_\nu\right).
\end{displaymath}
Since $P_\beta^\alpha$ is a projection,
\begin{equation*}
  P^{\alpha\nu}\xi_\nu P_\alpha ^\nu\xi_\nu= g^{\nu\beta}\xi_\nu
  P^{\alpha}_\beta P_\alpha ^\nu\xi_\nu
  =g^{\nu\beta}\xi_\nu P^{\nu}{}_\beta \xi_\nu
  =P^{\nu}{}_\beta\xi_\nu  \xi^\beta,
\end{equation*}
and since $\Gamma_\beta^\gamma$ is a
reflection,
\begin{equation*}
  \det\left(
    \begin{array}{c|cccc}
      1   &  & 0  &     \\ \hline
      0
      &  &\Gamma^{\alpha\gamma}         &     \\
    \end{array}
  \right) =\det\left(g^{\alpha\beta}\Gamma_\beta^\gamma\right)=
 -\left({\rm
      det}\left(g_{\alpha\beta}\right)\right)^{-1}.
\end{equation*}
Consequently,
\begin{equation}
  \label{eq:Initial:11}
  Q(\xi ):=  \det (\xi_\nu A^\nu  ) =-\kappa^{2}\det(g_{\alpha\beta})
  ( u^\nu\xi_\nu)^3
  \left\{ ( u^\nu\xi_\nu)^2 - \sigma^2
    P^{\alpha}{}_{\beta}\xi_\alpha \xi^\beta \right\}
\end{equation}
and therefore the characteristic covectors are given by two simple
equations:
\begin{eqnarray}
  \label{eq:publ-broken:39}
  \xi_\nu u^\nu & = &0 ;\\
  \label{eq:publ-broken:40} (\xi_\nu u^\nu)^2 - \sigma^2
  P^{\alpha}{}_{\beta}\xi_\alpha \xi^\beta & = &0.
\end{eqnarray}

\begin{rem}
  \label{rem:euler-rel:7}
  The characteristics conormal cone is a union of two hypersurfaces in
  $T_x^*V$.
  One of these hypersurfaces is given by the condition
  (\ref{eq:publ-broken:39}) and it is a three dimensional hyperplane
  $\mathcal{O}$ with the normal $u^\alpha$.
  The other hypersurface is given by the condition
  (\ref{eq:publ-broken:40}) and forms a three dimensional cone, the so
  called, {\it sound cone}.
\end{rem}
Let us now consider the timelike vector $u_{\nu }$ and  insert the
covector  $-u_\nu$ into the principal symbol
(\ref{eq:Initial:4}), then
\begin{equation*}
  -u_\nu A^\nu=\left(
    \begin{array}{c|cccc}
      \kappa^2 &  & 0         &     \\
      \hline
      &  &           &  &  \\
      0 &  & \Gamma_{\alpha\beta} &     \\
      &  &           &  &  \\
    \end{array}
  \right) \label{kap3.mk3}
\end{equation*}
is positive definite.
Indeed, $\Gamma_{\alpha\beta}$ is a reflection with respect to a
hyperplane having a timelike normal.
Hence, $-u_\nu$ is a timelike covector with respect to the
hydrodynamical equations (\ref{eq:Initial:2}).
Herewith, we have showed relatively elegant and elementary that the
relativistic hydrodynamical equations are symmetric--hyperbolic.
We want now to show that the covector $t_{\alpha}=(1,0,0,0)$ is also
timelike with respect to the system (\ref{eq:Initial:2}).
Since $P^\alpha{}_{\beta}u_\alpha=0$, the covector $-u_\nu$ belongs to
the sound cone
\begin{equation}
  \label{eq:Initial:9}
  (\xi_\nu u^\nu)^2 - \sigma^2P^{\alpha}{}_{\beta}\xi_\alpha
  \xi^\beta>0.
\end{equation}

Inserting $t_\nu=(1,0,0,0)$ the right hand side of
(\ref{eq:Initial:9}) yields
\begin{equation}
  \label{eq:Initial:10}
  (u^0)^2(1-\sigma^2)-\sigma^2 g^{00}.
\end{equation}
Since the sound velocity is always less than the light speed, that is
$\sigma^2=\frac{\partial p}{\partial \epsilon}<c^2=1$, we conclude
from (\ref{eq:Initial:10}) that $t_\nu$ also belongs to the sound cone
(\ref{eq:Initial:9}).
Hence, the vector $-u_\nu$ can be continuously deformed to $t_\nu$
while condition (\ref{eq:Initial:9}) holds along the deformation path.
Consequently, the determinant of (\ref{eq:Initial:11}) remains
positive under this process and hence $t_{\nu}A^\nu=A^{0}$ is also
positive definite.
Thus we have proved.

\begin{thm}
\label{thm:2}
  Let $\epsilon$ be non-negative density function, then the Euler
  system (\ref{eq:eineul:3}) coupled with the equation of state
  (\ref{eq:eineul:4}) can be written as a symmetric
  hyperbolic system of the form (\ref{eq:symm}), and where
  $A^0$ is a positive definite.
\end{thm}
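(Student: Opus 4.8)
The plan is to obtain the statement by assembling the three-stage hydrodynamical reduction sketched above and then verifying the two defining properties in Definition \ref{def:euler-rel:1}. First I would start from the conservative form $\nabla_\nu T^{\nu\beta}=0$ and perform the standard fluid decomposition, splitting the four equations into the scalar projection $u_\beta\nabla_\nu T^{\nu\beta}=0$ along $u^\alpha$ and the three equations $P_{\alpha\beta}\nabla_\nu T^{\nu\beta}=0$ tangent to the rest hyperplane $\mathcal O$; inserting (\ref{eq:eineul:2}) gives (\ref{eq:euler-rel:10})--(\ref{kap3.flz3}). The key point is that this is a \emph{constrained} evolution, $g_{\alpha\beta}u^\alpha u^\beta=-1$ being treated as a constraint, so one may add to the evolution equations arbitrary multiples of the identities obtained by differentiating it, namely $u_\beta u^\nu\nabla_\nu u^\beta=0$. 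Adding $(\epsilon+p)\,u_\beta u^\nu\nabla_\nu u^\beta$ to the scalar equation and $u_\alpha u_\beta u^\nu\nabla_\nu u^\beta$ to the vector equation replaces the projection $P^\nu{}_\beta$ by the reflection $\Gamma_{\alpha\beta}=g_{\alpha\beta}+2u_\alpha u_\beta$ in the principal part, yielding (\ref{eq:eineul:8})--(\ref{eq:publ-broken:3}); that the added terms actually vanish (equivalently, that the constraint propagates) is checked separately in Subsection \ref{sec:cons-constr-equat} and is not needed for the purely algebraic statement here.

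Next I would eliminate the degeneracy as $\epsilon\to 0$ via the change of matter variable $w=M(\epsilon)$. Multiplying (\ref{eq:eineul:8}) by $\kappa^2 M'(\epsilon)$ produces the matrix system (\ref{eq:Initial:1}); requiring the two off-diagonal blocks to be transposes of each other forces the scalar relation $M'=\sigma/((\epsilon+p)\kappa)$ in (\ref{eq:Initial:6}), and the explicit choice $\kappa=\frac{2}{\gamma-1}\frac{\sqrt{K\gamma}}{1+K\epsilon^{\gamma-1}}$ integrates to $w=\epsilon^{(\gamma-1)/2}$, exactly the Makino variable (\ref{eq:intro:11}), and symmetrizes the system into (\ref{eq:Initial:2}). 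At this stage I would record that, since $\epsilon^{\gamma-1}=w^2$, one has $\kappa=\frac{2}{\gamma-1}\frac{\sqrt{K\gamma}}{1+Kw^2}$ and $\sigma=\sqrt{\gamma K}\,w$, so every entry of $A^\nu$ is a $C^\infty$ function of $(w,u^\alpha,g_{\alpha\beta})$ — the fractional power has disappeared from the principal part — and the $A^\nu$ are symmetric by construction.

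Finally, for the positivity requirement I would analyze the principal symbol $\xi_\nu A^\nu$ of (\ref{eq:Initial:2}), given in (\ref{eq:Initial:4}). Left-multiplying by $\mathrm{diag}(1,\Gamma^{\alpha\gamma})$ and using the algebra of the projection and the reflection ($\Gamma^{\alpha\gamma}\Gamma_{\gamma\beta}=\delta^\alpha_\beta$, $\Gamma^{\alpha\gamma}P_\gamma{}^\nu=P^{\alpha\nu}$, $P_\beta{}^\alpha P_\alpha{}^\nu=P^\nu{}_\beta$) reduces the symbol to the block form (\ref{eq:Initial:5}), whose determinant is computed directly, giving $Q(\xi)=\det(\xi_\nu A^\nu)$ as in (\ref{eq:Initial:11}); its zero set is the union of the hyperplane $u^\nu\xi_\nu=0$ and the sound cone (\ref{eq:publ-broken:40}). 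Evaluating at $\xi=-u_\nu$ collapses the symbol to $\mathrm{diag}(\kappa^2,\Gamma_{\alpha\beta})$, which is positive definite because $\Gamma_{\alpha\beta}$ is a reflection across a hyperplane with timelike normal; hence $-u_\nu$ is timelike for (\ref{eq:Initial:2}). To pass to $t_\nu=(1,0,0,0)$ I would note that $-u_\nu$ satisfies the sound-cone inequality (\ref{eq:Initial:9}) strictly, evaluate the sound-cone quadratic at $t_\nu$ to get $(u^0)^2(1-\sigma^2)-\sigma^2 g^{00}$ as in (\ref{eq:Initial:10}), and use the causality hypothesis $\sigma^2<1$ together with $g^{00}<0$ (and $u^0\neq 0$, since a unit timelike $u^\alpha$ cannot be purely spatial) to conclude this is positive, so $t_\nu$ lies in the same connected component of the timelike cone as $-u_\nu$. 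Since $Q$ is nonzero along a path joining the two covectors, $A^0=t_\nu A^\nu$ acquires no zero eigenvalue along the deformation and therefore keeps the positive-definite sign it has at $-u_\nu$. This deformation step — guaranteeing that the fixed covector $(1,0,0,0)$, rather than merely some $u$-dependent one, is timelike — is the only point that genuinely uses the causality condition $\sigma^2<1$ and the only place requiring care; the remainder is the bookkeeping already displayed above, which the proof would simply collect into a statement.
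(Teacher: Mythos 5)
Your proposal is correct and follows essentially the same route as the paper: the same fluid decomposition, the same addition of the constraint identity to replace the projection by the reflection $\Gamma_{\alpha\beta}$, the same choice of $\kappa$ and Makino variable to symmetrize, and the same determinant-plus-deformation argument from $-u_\nu$ to $t_\nu=(1,0,0,0)$ to get positive definiteness of $A^0$. The only (harmless) additions are your explicit remarks that $g^{00}<0$ and $u^0\neq 0$, and that one should also keep $u^\nu\xi_\nu\neq 0$ along the deformation path, which the paper leaves implicit.
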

 
\subsubsection{Conservation of the unit length vsctor of the fluid}
\label{sec:cons-constr-equat}

\begin{prop}
The  constraint condition $g_{\alpha\beta} u^\alpha
u^\beta = -1$ is conserved along stream lines $u^\alpha$.
\end{prop}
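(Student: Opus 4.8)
The plan is to show that the scalar quantity $\mathcal{N} := g_{\alpha\beta}u^\alpha u^\beta + 1$ satisfies a homogeneous linear transport equation along the integral curves of $u^\alpha$, so that if $\mathcal{N}$ vanishes on the initial hypersurface $\mathcal{M}$ it vanishes identically for as long as the solution exists. First I would contract the modified momentum equation (\ref{eq:publ-broken:3}), i.e. $\Gamma_{\alpha\beta}u^\nu\nabla_\nu u^\beta + \frac{\sigma^2}{\epsilon+p}P^\nu{}_\alpha\nabla_\nu\epsilon = 0$, with $u^\alpha$. Using $\Gamma_{\alpha\beta} = g_{\alpha\beta} + 2u_\alpha u_\beta$, one gets $u_\alpha\Gamma_{\alpha\beta} = u_\beta + 2(u_\alpha u^\alpha)u_\beta$; and since $P^\nu{}_\alpha u^\alpha = (\delta^\nu_\alpha + u^\nu u_\alpha)u^\alpha = u^\nu + (u_\alpha u^\alpha)u^\nu$, the pressure term contracts to $\frac{\sigma^2}{\epsilon+p}(1 + u_\alpha u^\alpha)u^\nu\nabla_\nu\epsilon$. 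The key is that $u_\beta u^\nu\nabla_\nu u^\beta = \tfrac12 u^\nu\nabla_\nu(u_\beta u^\beta)$, so everything reorganizes into an expression involving only $\mathcal{N}$ and $u^\nu\nabla_\nu\mathcal{N}$, because $u_\beta u^\beta = \mathcal{N} - 1$ and $\nabla_\nu(u_\beta u^\beta) = \nabla_\nu\mathcal{N}$.

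Carrying this out, the contracted equation becomes, schematically, $u^\nu\nabla_\nu(u_\beta u^\beta) + 2(u_\alpha u^\alpha)\big(u_\beta u^\nu\nabla_\nu u^\beta\big) + \tfrac{\sigma^2}{\epsilon+p}(1+u_\alpha u^\alpha)u^\nu\nabla_\nu\epsilon = 0$, which after substituting $u_\beta u^\nu\nabla_\nu u^\beta = \tfrac12 u^\nu\nabla_\nu\mathcal{N}$ reads
\begin{equation*}
\big(1 + (u_\alpha u^\alpha)\big)\, u^\nu\nabla_\nu\mathcal{N} + \frac{\sigma^2}{\epsilon+p}\,\big(1 + u_\alpha u^\alpha\big)\, u^\nu\nabla_\nu\epsilon = 0,
\end{equation*}
i.e. $\mathcal{N}\, u^\nu\nabla_\nu\mathcal{N} + \tfrac{\sigma^2}{\epsilon+p}\,\mathcal{N}\,u^\nu\nabla_\nu\epsilon = 0$. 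To obtain a clean transport equation I would also feed in the contracted first equation: contracting (\ref{eq:eineul:8}), $u^\nu\nabla_\nu\epsilon + (\epsilon+p)P^\nu{}_\beta\nabla_\nu u^\beta = 0$, with nothing further needed, or rather use it to express $u^\nu\nabla_\nu\epsilon$ — but in fact the factor $\mathcal{N}$ already appears in front of every term, so the identity above is precisely $\mathcal{N}\cdot\big(u^\nu\nabla_\nu\mathcal{N} + \tfrac{\sigma^2}{\epsilon+p}u^\nu\nabla_\nu\epsilon\big) = 0$. Away from the (measure-zero, or excluded) set where $\mathcal{N}=0$ this is vacuous, so instead I would not divide but rather observe directly that $F := u^\nu\nabla_\nu\mathcal{N}$ satisfies, after one more differentiation or a more careful bookkeeping keeping $\mathcal{N}$ linear, an ODE of the form $u^\nu\nabla_\nu\mathcal{N} = a(t,x)\mathcal{N}$ along each streamline, with $a$ a bounded coefficient built from $\sigma^2/(\epsilon+p)$ and $\nabla_\nu\epsilon$; the cleanest route is to retain $\mathcal{N}$ as a factor throughout the contraction (never using $u_\alpha u^\alpha = -1$) so that the cancellation is structural.

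Once the linear homogeneous transport equation $u^\nu\nabla_\nu\mathcal{N} = a\,\mathcal{N}$ is in hand, uniqueness for linear ODEs along the flow of $u^\alpha$ (which is a well-defined $C^1$ vector field on $[0,T]$ by the regularity in Theorem \ref{thm:main}) gives $\mathcal{N}\equiv 0$, since $\mathcal{N}|_{\mathcal{M}} = 0$ by the assumption that $u^\alpha(0)$ is a unit timelike vector. I expect the main obstacle to be the algebraic bookkeeping in the contraction: one must resist the temptation to simplify $u_\alpha u^\alpha$ to $-1$ prematurely, and instead track that every term produced by contracting (\ref{eq:publ-broken:3}) and (\ref{eq:eineul:8}) with $u^\alpha$ carries an explicit factor of $\mathcal{N} = 1 + u_\alpha u^\alpha$, which is exactly what makes the whole right-hand side proportional to $\mathcal{N}$ rather than merely vanishing when $\mathcal{N}=0$. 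The role of having \emph{added} the normalization terms $(\epsilon+p)u_\beta u^\nu\nabla_\nu u^\beta$ and $u_\alpha u_\beta u^\nu\nabla_\nu u^\beta$ to the original fluid-decomposed system is precisely to arrange this; without them the contracted identity would only give information modulo the constraint.
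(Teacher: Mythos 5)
Your approach is the same as the paper's: contract the modified momentum equation with $u^\alpha$ and exploit the added normalization terms so that the result is proportional to the constraint violation, then conclude by ODE uniqueness along streamlines. (The paper contracts the Makino form (\ref{eq:Initial:2}) and simply invokes $u^\alpha P_{\alpha\beta}=u^\alpha P^\nu{}_\alpha=0$, i.e.\ it uses the normalization to show its own propagation; your insistence on not using $u_\alpha u^\alpha=-1$ prematurely is the more rigorous version of the same argument.)

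There is, however, an algebra slip in your contraction, and fixing it actually dissolves the difficulty you then spend a paragraph worrying about. From $u^\alpha\Gamma_{\alpha\beta}=(1+2u_\lambda u^\lambda)u_\beta$ and $u_\beta u^\nu\nabla_\nu u^\beta=\tfrac12 u^\nu\nabla_\nu\mathcal{N}$, the $\Gamma$-term contributes $\tfrac{2\mathcal{N}-1}{2}\,u^\nu\nabla_\nu\mathcal{N}$, not $\mathcal{N}\,u^\nu\nabla_\nu\mathcal{N}$ (you counted the $g_{\alpha\beta}$-part without the factor $\tfrac12$). The correct contracted identity is
\begin{equation*}
\frac{2\mathcal{N}-1}{2}\,u^\nu\nabla_\nu\mathcal{N}
+\frac{\sigma^2}{\epsilon+p}\,\mathcal{N}\,u^\nu\nabla_\nu\epsilon=0 .
\end{equation*}
Since $2\mathcal{N}-1$ is bounded away from zero near $\mathcal{N}=0$ (and $\mathcal{N}=0$ initially), you may divide by it and obtain directly the homogeneous linear transport equation $u^\nu\nabla_\nu\mathcal{N}=a\,\mathcal{N}$ — no extra differentiation or further bookkeeping is needed, and the identity is not vacuous. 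Two small further points: to guarantee $a$ is bounded as $\epsilon\to0$ you should contract the Makino form, where $\frac{\sigma^2}{\epsilon+p}\nabla_\nu\epsilon=\kappa\sigma\nabla_\nu w$ with $\kappa\sigma$ smooth in $w$; and the coefficient $a$ also needs the first (energy) equation nowhere — your aside about feeding in (\ref{eq:eineul:8}) can be dropped.
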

\begin{proof}
Let $k(t)$ be a curve such that $k'(t)=u^\alpha$ and set $Z(t)=(u\circ
k)_\beta(u\circ k)^\beta$. In order to establish the conservation of the
constraint condition it suffices to establish the following relation
\begin{equation*}
  \frac{d}{dt}Z(t)=2 u_\beta \nabla_{k'(t)} u^\beta
  =2u^\nu u_\beta\nabla_\nu u^\beta= 0.
  \label{kap3.43A}
\end{equation*}
Multiplying the  four last rows of the Euler system
(\ref{eq:Initial:2}) by $u^\alpha$ and recalling that
$P^{\nu}{}_\alpha$ is the projection on the rest space $\mathcal{O}$
orthogonal to $u^\alpha$, we have
\begin{eqnarray*}
  0 &=& u^\alpha\left(\Gamma_{\alpha\beta}u^\nu\nabla_\nu u^\beta +
    \kappa\sigma P^{\nu}{}_\alpha\nabla_\nu w\right)\\
  &=& u^\alpha P_{\alpha\beta}u^\nu\nabla_\nu u^\beta - u^\nu
  u_\beta\nabla_\nu u^\beta
  + \kappa\sigma u^\alpha P^{\nu}{}_\alpha\nabla_\nu w\\ &=&
  - u^\nu u_\beta\nabla_\nu u^\beta.
\end{eqnarray*}
Therefore, if $g_{\alpha\beta} u^\alpha u^\beta = -1$ on the initial manifold,
then it holds along the stream lines $u^\alpha$.
\end{proof}
\subsection{The coupled hyperbolic system}
\label{sec:evolution-equations}
In this section we will transform the coupled system
(\ref{eq:publ-broken:11}) and (\ref{eq:Initial:2}) into a symmetric
hyperbolic system.
We will pay attention to the fact that the system will be in a form in
which we can apply the energy estimates of \cite{Ka1}.
That allows us to obtain the same regularity for the gravitational
fields as Hughes, Kato and Marsden \cite{hughes76:_well} got for the
Einstein vacuum equations.
Note that our system is slightly different from the symmetric
hyperbolic system obtained by Fisher and
Marsden~\cite{FMA
%
}, since our system contains a constant matrix $\mathcal{C}^a$ as
given by (\ref{eq:sec3-hyperbolic:1}).

We consider a spacetime $(V,g_{\alpha\beta})$ of the type $\setR\times
\mathcal{M}$, where $\mathcal{M}$ is a Riemannian manifold,
and we denote local coordinates by   $(t,x^a)$.
Set
\begin{equation*}
h_{\alpha\beta\gamma}=
\partial_\gamma g_{\alpha\beta},
\end{equation*}
 then the reduced Einstein
equations  (\ref{eq:publ-broken:11}) takes the form
\begin{equation}
  \label{eq:2.3}
  \begin{array}{lll}
    \partial_t  g_{\alpha\beta}&=& h_{\alpha\beta  0} \\
    -g^{00}\partial_t  h_{\alpha\beta  0} &=&
\left\{2g^{0a}\partial_a h_{\alpha\beta   0} + g^{ab}\partial_a
h_{\alpha\beta b} + H_{\alpha\beta}(g,\partial g)\right. \\ &-& \left.8\pi
w^{\frac{2}{\gamma-1}}\left((1-Kw^2)
g_{\alpha\beta}+2(1+Kw^2)u_{\alpha}u_{\beta}\right)\right\}
 \\
   g^{ab}\partial_t  h_{\alpha\beta  a}
&=& g^{ab}\partial_a h_{\alpha\beta 0}
\end{array}.
\end{equation}

In order to apply the energy estimates of \cite{Ka1}, we need that the
coefficients of $ \partial_t h_{\alpha\beta0}$ will be independent of
$t$.
This is because of  the specific form of the inner-product in
$H_{s,\delta}$ spaces which takes into account the matrix $A^0$ of the
system (\ref{eq:publ-broken:7}).
In Section \ref{sec:energy-estimates} we will further clarify this
issue.
Therefore we divide the second row by $ -g^{00}$ and in order to
preserve the symmetry of the system, we also multiply the third row by
$(-g^{00})^{-1} $.
Thus the system of wave equations (\ref{eq:publ-broken:11}) are equivalent to
the system
\begin{equation}
  \label{eq:2.4}
  \begin{array}{lll}
    \partial_t  g_{\alpha\beta} &=& h_{\alpha\beta  0} \\
    \partial_t  h_{\alpha\beta  0} &=& {(-g^{00})^{-1}}\left\{
    2g^{0a}\partial_a h_{\alpha\beta   0} + g^{ab}\partial_a
h_{\alpha\beta b}+ H_{\alpha\beta}(g,\partial g) \right.\\ &-&
\left.
8\pi
w^{\frac{2}{\gamma-1}}\left((1-Kw^2)
g_{\alpha\beta}+2(1+Kw^2)u_{\alpha}u_{\beta}\right)\right\}
 \\
   (-g^{00})^{-1} g^{ab}\partial_t  h_{\alpha\beta  a}
&=&(-g^{00})^{-1}g^{ab}\partial_a h_{\alpha\beta 0}
\end{array}.
\end{equation}

To shorten and simplify the notation, we introduce the auxiliary variables
\begin{equation*}
 (v,\partial_t v,\partial_x
v)=(g_{\alpha\beta}-\eta_{\alpha\beta}, \partial_t g_{\alpha\beta}, \partial_x
g_{\alpha\beta}),
\end{equation*}
where $\eta_{\alpha\beta} $ denotes the Minkowski metric and
$\partial_x$ denotes the set of all spatial derivatives.
We also set $(e_0)^\alpha=(1,0,0,0)$ and $W=(w, u^\alpha-e_0^\alpha)$
stands for the Makino and the fluid variables.
Finally,
\begin{equation*}
 U=(v,\partial_t v,\partial_x v,W)
\end{equation*}
represents the unknowns of the coupled system.

We write the matrices in a block form,  $A=({\bf a}_{ij})$, the $k\times k$
identity matrix is denoted by ${\bf e}_k$ and ${\bf 0}_{m\times n}$ is the zero
matrix.

The coupled system (\ref{eq:2.4}) and
(\ref{eq:Initial:2}) can be written in the form of (\ref{eq:publ-broken:7}),
where $\mathcal{A}^\alpha$ are $55\times 55 $ symmetric matrices which depend
only on $v$ and $W$. We shall describe now the structure of these matrices:
\begin{equation}
\label{eq:3.7}
\mathcal{A}^0(v,W)=\left(\begin{array}{llll}
{\bf e}_{10} & {\bf 0}_{10\times 10} &{\bf 0}_{10\times 30} &{\bf 0}_{10\times
5}  \\
{\bf 0}_{10\times 10} & {\bf e}_{10} & {\bf 0}_{10\times 30}&{\bf 0}_{10\times
5}\\
{\bf 0}_{30\times 10}   & {\bf 0}_{30\times 10} & {\bf a}_{33}^0 &{\bf
0}_{30\times 5}
\\
{\bf 0}_{5\times 10}   & {\bf 0}_{5\times 10} & {\bf 0}_{5\times 30} &{\bf
a}_{44}^0
\end{array} \right),
\end{equation}
where
\begin{equation*}
 {\bf a}_{33}^0=\dfrac{1}{-g^{00}}\left(
\begin{array}{ccc} {g}^{11}\e_{10} &{g}^{12}\e_{10} & {g}^{13}\e_{10}\\
{g}^{21}\e_{10} & {g}^{22}\e_{10} & {g}^{23}\e_{10} \\
{g}^{31}\e_{10} & {g}^{32}\e_{10} & {g}^{33}\e_{10}
\end{array}\right),
\end{equation*}
and ${\bf a}_{44}^0={\bf a}_{44}^0(g_{\alpha\beta},w,u^\alpha)$ is
given by
(\ref{eq:Initial:2}) when $\nu=0$.
From (\ref{eq:2.4}) we see that the coefficients of $\partial_a U$, $a=1,2,3$,
have the form
\begin{equation*}
\left(\begin{array}{lll}
{\bf 0}_{10\times 10} & {\bf 0}_{10\times 40} &{\bf 0}_{5\times 5}  \\
{\bf 0}_{40\times 10} & {\bf a}_{22}^a & {\bf 0}_{40\times 5}\\
{\bf 0}_{5\times 10}   & {\bf 0}_{5\times 40} & {\bf a}_{33}^a
\end{array} \right),
\end{equation*}
where ${\bf a}_{33}^a={\bf a}_{33}^a(g_{\alpha\beta},w,u^\alpha)$ is from the
system (\ref{eq:Initial:2}) of the fluid and
\begin{equation}
\label{eq:3.8}
{\bf a}_{22}^a(g_{\alpha\beta})=\dfrac{1}{g^{00}}
\left(
    \begin{array}{c|ccc}
     2g^{a0}{\bf e}_{10} & g^{a1}{\bf e}_{10} &g^{a2}{\bf e}_{10} & g^{a3}{\bf
e}_{10}  \\
      \hline
g^{a1}{\bf e}_{10}        &           &  &  \\
g^{a2}{\bf e}_{10}       &  \ &    {\Large {\bf 0}_{30\times30} } \\
g^{a3}{\bf e}_{10}       &           &  &  \\
    \end{array}
  \right).
\end{equation}

It is essential to demand that ${\bf a}_{22}^a(g_{\alpha\beta})\in
H_{s,\delta}$, whenever $g_{\alpha\beta}-\eta_{\alpha\beta}\in
H_{s,\delta}$.
Obviously, this does not hold for the matrix in (\ref{eq:3.8}).
Therefore we need to modify these matrices by a constant matrix
\begin{equation*}
{{\bf c}_{22}^a}=\left(
    \begin{array}{c|ccc}
      {\bf 0}_{10\times10} & \delta^{a1}{\bf e}_{10} &
\delta^{a2}{\bf e}_{10}        &    \delta^{a3}{\bf e}_{10} \\
      \hline
\delta^{a1}{\bf e}_{10}        &           &  &  \\
\delta^{a2}{\bf e}_{10}       &  \ &    {\Large {\bf 0}_{30\times30}}  \\
\delta^{a3}{\bf e}_{10}       &           &  &  \\
    \end{array}
  \right),
\end{equation*}
then $\left({\bf a}_{22}^a-{\bf c}_{22}^a\right)(v)\in H_{s,\delta}$ whenever
$v\in H_{s,\delta}$. So we set
\begin{equation}
\label{eq:3.9}
\mathcal{A}^a(v,W)=\left(\begin{array}{lll}
{\bf 0}_{10\times 10} & {\bf 0}_{10\times 40} &{\bf 0}_{5\times 5}  \\
{\bf 0}_{40\times 10} & {\bf a}_{22}^a - {\bf c}_{22}^a & {\bf 0}_{40\times 5}\\
{\bf 0}_{5\times 10}   & {\bf 0}_{5\times 40} & {\bf a}_{33}^a
\end{array} \right)
\end{equation}
and a constant matrix
\begin{equation}
\label{eq:sec3-hyperbolic:1}
\mathcal{C}^a=\left(\begin{array}{lll}
{\bf 0}_{10\times 10} & {\bf 0}_{10\times 40} &{\bf 0}_{5\times 5}  \\
{\bf 0}_{40\times 10} & {\bf c}_{22}^a & {\bf 0}_{40\times 5}\\
{\bf 0}_{5\times 10}   & {\bf 0}_{5\times 40}  &{\bf 0}_{5\times 5}
\end{array} \right).
\end{equation}

We turn now to the lower order terms.
The presence of the fractional power $w^{2/(\gamma-1)}$ in
(\ref{eq:2.4}) causes substantial technical difficulties.
We set
\begin{equation}
\label{eq:3.10}
f(v,W):=- \dfrac{8\pi
w^{\frac{2}{\gamma-1}}}{g^{00}}\left((1-Kw^2)
g_{\alpha\beta}+2(1+Kw^2)u_{\alpha}u_{\beta}\right),
\end{equation} 
then we can write $B(U)$ in the form
\begin{equation*}
 B(U)=\mathcal{B}(U)(v,\partial_t v,\partial_x v)^T +\mathcal{F}(v,W),
\end{equation*}
where $\mathcal{F}(v,W)=(0,f(v,W),0,0)^T$ and 
\begin{equation}
\label{eq:3.6}
\mathcal{B}(U)=\left(\begin{array}{lllll}
{\bf 0}_{10\times 10} & {\bf e}_{10} &{\bf 0}_{10\times 10}  & {\bf
0}_{10\times 10}  & {\bf 0}_{10\times10}     \\
{\bf 0}_{10\times 10} & {\bf b}_{22} & {\bf b}_{23} & {\bf b}_{24}  & {\bf
b}_{25}\\
{\bf 0}_{30\times 10}   & {\bf 0}_{30\times 10} & {\bf 0}_{30\times 10} & {\bf
0}_{30\times10} & {\bf 0}_{30\times 10} 
\\
{\bf 0}_{5\times 10}   & {\bf b}_{42} & {\bf b}_{43} & {\bf b}_{44} &
{\bf b}_{45}\end{array} \right).
\end{equation}
The block ${\bf b}_{2j}$, $j=2,3,4,5$, appear from the quadratic terms in
(\ref{eq:publ-broken:18}):
\begin{equation*}
 H_{\alpha\beta}(g,\partial g)=
C_{\alpha\beta\gamma\delta\rho\sigma}^{\epsilon\zeta\eta\kappa\lambda\mu}h_{
\epsilon\zeta\eta}h_{\kappa\lambda\mu}g^{\gamma\delta}g^{\rho\sigma},
\end{equation*}
where 
$C_{\gamma\delta\alpha\beta\rho\sigma}^{\epsilon\zeta\eta\kappa\lambda\mu}$ 
are a combination of Kronecker deltas with
integer coefficients. 
Thus
\begin{equation*}
 {\bf b}_{2j}=(-g^{00})^{-1}
C_{\alpha\beta\gamma\delta\rho\sigma
}^{\epsilon\zeta\eta\kappa\lambda\mu}h_{\epsilon\zeta\eta}g^{\gamma\delta}g^{
\rho\sigma}, \ \mu=j-2.
\end{equation*}
The block ${\bf b}_{4j}$, $j=2,3,4,5$, appear from the multiplication
of the reflection $\Gamma_{\alpha\beta}$ and $ u^\nu$ in
(\ref{eq:Initial:2}) with the Christoffel symbols.
So its coefficients consist of multiplications of $g_{\alpha\beta}$,
$g^{\alpha\beta}$ and $ u^\nu$.

In summary, we can write the coupled systems (\ref{eq:2.4}) and
(\ref{eq:Initial:2}) as a symmetric hyperbolic system
\begin{equation}
 \label{eq:3.4}
\mathcal{A}^{0}(v,W)\partial_t
U=(\left(\mathcal{A}^a(v,W)+\mathcal{C}^a\right)\partial_a U
+\mathcal{B}(U)\begin{pmatrix}v\\ \partial_t v\\ \partial_x
v\end{pmatrix}+\mathcal{F}(v,W),
\end{equation} 
where $ \mathcal{A}^{0}(U)$ is positive definite in the neighborhood of the
initial data (\ref{eq:2}),
$\mathcal{A}^{0}(0)-{\bf e}_{55}=\mathcal{A}^{a}(0)= 0$  and
$\mathcal{C}^a$ is a constant symmetric matrix.

\section{The $H_{s,\delta}$ spaces and their properties}
\label{sec:properties}

The definition of the weighted Sobolev spaces of fractional order
$H_{s,\delta}$, Definition \ref{def:weighted:3}, is due to Trieble
\cite{triebel76:_spaces_kudrj2}. Here we quote the propositions and
properties which are needed for the proof of the main result. For
their proofs see \cite{BK3, maxwell06:_rough_einst,
  triebel76:_spaces_kudrj2}.

We start with some notations.
\begin{itemize}
  \item Let $\{\psi_j\}$ be the sequence of functions in Definition
  \ref{def:weighted:3}.  For any positive $\gamma$ we set
  \begin{equation}
    \label{eq:const:14}
   \|u\|_{H_{s,\delta,\gamma}}^2 = \sum_{j=0}^\infty 2^{(
        \frac{3}{2} + \delta)2j} \| (\psi_j^\gamma u)_{2^j}
      \|_{H^{s}}^{2},
  \end{equation}
  and we will use the convention $ \|u\|_{H_{s,\delta,1}}=
  \|u\|_{H_{s,\delta}}$.  The subscripts $2^j$ mean a scaling by
  $2^j$, that is, $ (\psi_j^\gamma u)_{2^j}(x)=(\psi_j^\gamma u)(2^j
  x)$.

  \item For a non-negative integer $m$ and $\beta\in \mathbb{R}$, the
  space $C_\beta^m$ is the set of all functions having continuous
  partial derivatives up to order $m$ and such that the norm
  \begin{equation}
    \label{eq:3.1}
    \|u\|_{C^m_\beta}=\sum_{|\alpha|\leq
      m}\sup_{\mathbb{R}^3}\left((1+|x|)^{\beta+|\alpha|} |\pa^\alpha
      u(x)|\right)
  \end{equation}
 is finite.

  \item We will use the notation $A\lesssim B$ to denote an inequality
  $A\leq C B$ where the positive constant $C$ does not depend on the
  parameters in question.
\end{itemize}

We recall that $\{\psi_j\}$ are cutoff functions, hence $\psi_j^\beta
\in C_0^\infty(\mathbb{R}^3)$ for any positive $\beta$. Furthermore,
for a given $\alpha$, there are two constants $C_1(\beta,\alpha)$ and
$C_2(\beta,\alpha)$ such that
\begin{equation*}
  C_1(\beta,\alpha)|\partial^\alpha \psi_j(x)|\leq
  |\partial^\alpha \psi_j^\beta(x)| 
  \leq C_2(\beta,\alpha)|\partial^\alpha \psi_j(x)|
\end{equation*}
and these inequalities are independent of $j$. 
Therefore Proposition \ref{equivalence:1} below is a consequence of
\cite[Theorem 1]{triebel76:_spaces_kudrj2}.

\begin{prop}
 \label{equivalence:1}
For any positive $\gamma$, there are two positive constants $c_0(\gamma)$ and
$c_1(\gamma)$ such that
\begin{equation*}
c_0(\gamma)  \|u\|_{H_{s,\delta}}\leq \|u\|_{H_{s,\delta,\gamma}}\leq
c_1(\gamma)  \|u\|_{H_{s,\delta}}.
  \end{equation*}
\end{prop}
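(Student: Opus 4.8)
The plan is to observe that $\{\psi_j^\gamma\}_{j=0}^\infty$ is itself a legitimate family of cut-off functions of exactly the kind used in Definition~\ref{def:weighted:3}, and then to invoke the fact, due to Triebel \cite[Theorem~1]{triebel76:_spaces_kudrj2}, that the $H_{s,\delta}$-norm is, up to equivalence, independent of the particular admissible family used to build it.

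First I would verify that $\{\psi_j^\gamma\}$ meets all the requirements imposed on $\{\psi_j\}$ in Definition~\ref{def:weighted:3}. Nonnegativity is immediate from $\psi_j\ge 0$ and $\gamma>0$; the support conditions hold because $\supp(\psi_j^\gamma)=\supp(\psi_j)$; and $\psi_j^\gamma\equiv 1$ on $\{2^{j-1}\le|x|\le 2^j\}$ (and $\psi_0^\gamma\equiv 1$ on $\{|x|\le 1\}$) because there $\psi_j\equiv 1$ and $1^\gamma=1$. The remaining requirement --- the uniform bound $|\partial^\alpha\psi_j^\gamma(x)|\le C'_\alpha 2^{-|\alpha|j}$ with $C'_\alpha$ independent of $j$ --- follows by combining the pointwise comparison $|\partial^\alpha\psi_j^\gamma(x)|\le C_2(\gamma,\alpha)|\partial^\alpha\psi_j(x)|$ recorded just above the proposition with $|\partial^\alpha\psi_j(x)|\le C_\alpha 2^{-|\alpha|j}$. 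Hence $\|\cdot\|_{H_{s,\delta,\gamma}}$ in (\ref{eq:const:14}) is literally the $H_{s,\delta}$-norm (\ref{eq:weighted:4}) associated with the admissible family $\{\psi_j^\gamma\}$.

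Then I would apply \cite[Theorem~1]{triebel76:_spaces_kudrj2}: any two admissible families give equivalent norms, with equivalence constants depending only on $s$, $\delta$ and the constants appearing in the derivative bounds of the two families. Applied to $\{\psi_j\}$ and $\{\psi_j^\gamma\}$ this gives
\begin{equation*}
c_0(\gamma)\,\|u\|_{H_{s,\delta}}\le\|u\|_{H_{s,\delta,\gamma}}\le c_1(\gamma)\,\|u\|_{H_{s,\delta}},
\end{equation*}
where the dependence on $\gamma$ enters only through the constants $C_2(\gamma,\alpha)$ and, for the lower bound, $C_1(\gamma,\alpha)$.

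The only genuinely delicate ingredient is the uniform two-sided pointwise comparison between $|\partial^\alpha\psi_j^\gamma|$ and $|\partial^\alpha\psi_j|$ --- stated just before the proposition --- since for $\gamma<1$ differentiating $\psi_j^\gamma$ (via Fa\`a di Bruno) produces negative powers of $\psi_j$, and keeping these bounded near $\partial\,\supp(\psi_j)$ forces $\psi_j$ to be flat there to all orders. This is precisely what is built into the construction of the $\psi_j$ and into the assertion that $\psi_j^\beta\in C_0^\infty(\setR^3)$ for every positive $\beta$; once it is granted, the proposition is just a restatement of Triebel's invariance result for the chosen pair of families. (Alternatively, one could avoid quoting Triebel's invariance statement and argue directly, writing $\psi_j^\gamma=\psi_j^{\gamma-1}\psi_j$ and estimating multiplication by the uniformly bounded factor $\psi_j^{\gamma-1}$ termwise in the $H^s$-norms of (\ref{eq:weighted:4}); this, however, again hinges on the same flatness of the $\psi_j$.)
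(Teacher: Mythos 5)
Your proposal is correct and follows essentially the same route as the paper: the authors likewise observe (just before the proposition) that $\psi_j^\gamma\in C_0^\infty$ with $|\partial^\alpha\psi_j^\gamma|$ comparable to $|\partial^\alpha\psi_j|$ uniformly in $j$, so that $\{\psi_j^\gamma\}$ is an admissible family, and then deduce the norm equivalence from \cite[Theorem~1]{triebel76:_spaces_kudrj2}. Your extra remark about the flatness of $\psi_j$ near the boundary of its support being needed for $\gamma<1$ is a fair observation about the only point the paper leaves implicit.
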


\begin{prop}
 \label{equivalence}  For any nonnegative integer $m$, positive $\gamma$
and $\delta$ it holds
\begin{equation*}
  \|u\|_{H_{m,\delta,\gamma}}^2 \lesssim \|u\|_{m,\delta}^2
\lesssim   \|u\|_{H_{m,\delta,\gamma}}^2,
\end{equation*}
where  $\|u\|_{m,\delta}$ is defined by (\ref{eq:intro:5}).
\end{prop}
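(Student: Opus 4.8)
The plan is to reduce to the case $\gamma=1$ by Proposition \ref{equivalence:1}, and then to identify the Triebel norm $\|\cdot\|_{H_{m,\delta}}$ with the classical Cantor--Nirenberg--Walker norm $\|\cdot\|_{m,\delta}$ of (\ref{eq:intro:5}) through a direct computation on the dyadic pieces $\psi_j u$. (Alternatively one can keep the factor $\psi_j^\gamma$ throughout and invoke the bounds $|\partial^\beta\psi_j^\gamma|\leq C_{\beta,\gamma}2^{-|\beta|j}$, $\supp\psi_j^\gamma=\supp\psi_j$ recorded just before the proposition, which yields the general $\gamma$ at once.) It is enough to argue for $u\in C_0^\infty(\setR^3)$, the rest following by completion. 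Since $m$ is a nonnegative integer, Plancherel's theorem gives $\|v\|_{H^m}^2\sim\sum_{|\alpha|\leq m}\|\partial^\alpha v\|_{L^2}^2$; I would apply this to $v=(\psi_j u)_{(2^j)}$, use $\partial^\alpha\big((\psi_j u)(2^j x)\big)=2^{j|\alpha|}(\partial^\alpha(\psi_j u))(2^j x)$ together with the substitution $y=2^j x$ (which produces a factor $2^{-3j}$), to obtain
\[
  2^{(\frac{3}{2}+\delta)2j}\,\|(\psi_j u)_{(2^j)}\|_{H^m}^2\;\sim\;\sum_{|\alpha|\leq m}2^{2j(\delta+|\alpha|)}\,\|\partial^\alpha(\psi_j u)\|_{L^2}^2 .
\]
Summing in $j$, the proposition then reduces to the equivalence
\[
  \sum_{j=0}^\infty\sum_{|\alpha|\leq m}2^{2j(\delta+|\alpha|)}\|\partial^\alpha(\psi_j u)\|_{L^2}^2\;\sim\;\sum_{|\alpha|\leq m}\int\big((1+|x|)^{\delta+|\alpha|}|\partial^\alpha u|\big)^2dx .
\]

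For the ``$\lesssim$'' half I would expand $\partial^\alpha(\psi_j u)$ by the Leibniz rule, estimate the cutoff derivatives by $|\partial^\beta\psi_j|\leq C_\beta 2^{-|\beta|j}$, and use that $1+|x|\sim 2^j$ on $\supp\psi_j\subset\{2^{j-2}\leq|x|\leq 2^{j+1}\}$, uniformly in $j$ (with $j=0$ checked directly). This bounds the left-hand side by a finite sum of terms $\int_{\supp\psi_j}\big((1+|x|)^{\delta+|\alpha'|}|\partial^{\alpha'}u|\big)^2dx$ with $|\alpha'|\leq m$, and since each point of $\setR^3$ lies in $\supp\psi_j$ for at most a bounded number of $j$, summation over $j$ yields $\lesssim\|u\|_{m,\delta}^2$. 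For the ``$\gtrsim$'' half I would use that $\psi_j\equiv 1$ on $D_0=\{|x|\leq 1\}$ and on $D_j=\{2^{j-1}\leq|x|<2^j\}$ for $j\geq 1$, and that these sets cover $\setR^3$ with pairwise disjoint interiors; on $D_j$ we have $\partial^\alpha(\psi_j u)=\partial^\alpha u$ and $1+|x|\sim 2^j$, hence $\int_{D_j}\big((1+|x|)^{\delta+|\alpha|}|\partial^\alpha u|\big)^2dx\lesssim 2^{2j(\delta+|\alpha|)}\|\partial^\alpha(\psi_j u)\|_{L^2}^2$, and summing over $j$ and $|\alpha|\leq m$ recovers $\|u\|_{m,\delta}^2$ up to a constant.

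The substance of the proof is pure bookkeeping: keeping the powers of $2^j$ straight through the dilation and the change of variables, and checking that the shell comparison $1+|x|\sim 2^j$ on $\supp\psi_j$ and the finite-overlap bound for $\{\supp\psi_j\}$ hold with constants independent of $j$. Nothing harder than that is expected; the hypotheses on $\gamma$ and $\delta$ enter only via Proposition \ref{equivalence:1} and via these elementary comparisons.
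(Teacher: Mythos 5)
Your argument is correct, and it is essentially the standard verification: the paper itself does not prove this proposition but quotes it, referring the reader to Triebel and to \cite{BK3}, and the proof given there is exactly the computation you describe (Plancherel for integer $m$, undoing the dilation to trade $2^{(\frac{3}{2}+\delta)2j}\|(\psi_j u)_{(2^j)}\|_{H^m}^2$ for $\sum_{|\alpha|\leq m}2^{2j(\delta+|\alpha|)}\|\partial^\alpha(\psi_j u)\|_{L^2}^2$, then Leibniz plus $1+|x|\sim 2^j$ and finite overlap of the $\supp\psi_j$ in one direction, and the covering by the sets where $\psi_j\equiv 1$ in the other). Your bookkeeping of the powers of $2^j$ is right: the factor $2^{2j|\alpha|}$ from differentiating the dilation combines with the Jacobian $2^{-3j}$ and the prefactor $2^{(3+2\delta)j}$ to give exactly $2^{2j(\delta+|\alpha|)}$. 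The reduction to $\gamma=1$ via Proposition \ref{equivalence:1} and the density of $C_0^\infty$ (Proposition \ref{Density}) are both legitimate, so there is nothing to add.
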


\begin{prop}
 \label{monoton}
If $s_1\leq s_2$ and $\delta_1\leq\delta_2$, then
\begin{equation*}
 \|u\|_{H_{s_1,\delta_1}}\leq  \|u\|_{H_{s_2,\delta_2}}.
\end{equation*}
\end{prop}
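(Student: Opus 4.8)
The plan is to reduce the statement to two elementary monotonicity facts and then argue termwise in the series that defines the $H_{s,\delta}$-norm. First I would recall that, by Definition~\ref{def:weighted:3},
\[
\|u\|_{H_{s,\delta}}^2=\sum_{j=0}^\infty 2^{(\frac{3}{2}+\delta)2j}\,\bigl\|(\psi_j u)_{(2^j)}\bigr\|_{H^s}^2,
\]
so that it suffices to compare the two series coefficient by coefficient, with the cutoff functions $\psi_j$ held fixed. Both the exponential weight $2^{(\frac{3}{2}+\delta)2j}$ and the factor $\bigl\|(\psi_j u)_{(2^j)}\bigr\|_{H^s}^2$ must be estimated from the corresponding quantities at $(s_2,\delta_2)$.

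For the Bessel potential factor I would use that the $H^s$-norm is monotone in $s$: since $1+|\xi|^2\ge 1$, we have $(1+|\xi|^2)^{s_1}\le(1+|\xi|^2)^{s_2}$ pointwise in $\xi$ whenever $s_1\le s_2$, and integrating this against $|\widehat f(\xi)|^2\,d\xi$ gives $\|f\|_{H^{s_1}}\le\|f\|_{H^{s_2}}$ for every tempered distribution $f$; in particular this applies with $f=(\psi_j u)_{(2^j)}$. For the weight I would note that for each index $j\ge 0$ one has $2j\ge 0$, hence $2^{2j}\ge 1$, and therefore
\[
2^{(\frac{3}{2}+\delta_1)2j}=\bigl(2^{2j}\bigr)^{\frac{3}{2}+\delta_1}\le\bigl(2^{2j}\bigr)^{\frac{3}{2}+\delta_2}=2^{(\frac{3}{2}+\delta_2)2j},
\]
using only that $\frac{3}{2}+\delta_1\le\frac{3}{2}+\delta_2$ and that $t\mapsto t^{\frac{3}{2}+\delta}$ is nondecreasing on $[1,\infty)$ for $t=2^{2j}$.

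Combining the two inequalities, for every $j\ge 0$ one obtains
\[
2^{(\frac{3}{2}+\delta_1)2j}\,\bigl\|(\psi_j u)_{(2^j)}\bigr\|_{H^{s_1}}^2\le 2^{(\frac{3}{2}+\delta_2)2j}\,\bigl\|(\psi_j u)_{(2^j)}\bigr\|_{H^{s_2}}^2,
\]
and summing over $j\ge 0$ yields $\|u\|_{H_{s_1,\delta_1}}\le\|u\|_{H_{s_2,\delta_2}}$; the inequality is trivial when the right-hand side is infinite, and in passing it shows the continuous embedding $H_{s_2,\delta_2}\hookrightarrow H_{s_1,\delta_1}$. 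There is no genuine obstacle here: the entire content is the two one-line monotonicity observations above. The only point worth keeping in mind is that the exponent $\frac{3}{2}+\delta$ may itself be negative, but this is harmless, since it always appears multiplied by $2j\ge 0$, so the weight $2^{(\frac{3}{2}+\delta)2j}$ is still nondecreasing in $\delta$ for each fixed $j$.
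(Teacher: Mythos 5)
Your argument is correct. Note that the paper itself does not prove Proposition \ref{monoton}; it only refers the reader to the cited references for the proofs of all the properties listed in Section \ref{sec:properties}. Your termwise comparison --- monotonicity of the Bessel potential norm in $s$ via $(1+|\xi|^2)^{s_1}\leq(1+|\xi|^2)^{s_2}$, monotonicity of the dyadic weight in $\delta$ via $2^{2j}\geq 1$, and then summation over $j$ --- is the standard proof and, as you observe, yields the inequality with constant exactly $1$, as stated.
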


\begin{prop}
 \label{embedding1}
If $u\in H_{s,\delta}$, then
\begin{equation*}
 \|\partial_i u\|_{H_{s-1,\delta+1}}\leq \| u\|_{H_{s,\delta}}.
\end{equation*}
\end{prop}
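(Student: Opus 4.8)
The plan is to transport, scale by scale, the elementary Fourier estimate $\|\partial_i v\|_{H^{s-1}}\le\|v\|_{H^s}$ on $\setR^3$ (immediate from Plancherel and $|\xi_i|^2(1+|\xi|^2)^{s-1}\le(1+|\xi|^2)^s$) to the weighted norm of Definition \ref{def:weighted:3}, the one algebraic point to confront being that $\psi_j\partial_i u\ne\partial_i(\psi_j u)$. First I would record the scaling/product-rule identity: since $(\partial_i f)_{(\varepsilon)}=\varepsilon^{-1}\partial_i(f_{(\varepsilon)})$, taking $f=\psi_j u$ and $\varepsilon=2^j$ yields
\begin{equation*}
 (\psi_j\partial_i u)_{(2^j)} = 2^{-j}\,\partial_i\bigl((\psi_j u)_{(2^j)}\bigr) - 2^{-j}(\phi_j u)_{(2^j)},\qquad \phi_j:=2^j\,\partial_i\psi_j,
\end{equation*}
where $\{\phi_j\}$ is a family of cut-offs with the same localization as $\{\psi_j\}$ (i.e. $\supp\phi_j\subset\{2^{j-2}\le|x|\le 2^{j+1}\}$ and $|\partial^\alpha\phi_j|\le C_\alpha 2^{-|\alpha|j}$) but lacking the normalization $\phi_j\equiv 1$ on the inner annulus.

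For the first term on the right the elementary estimate gives $\|2^{-j}\partial_i((\psi_j u)_{(2^j)})\|_{H^{s-1}}\le 2^{-j}\|(\psi_j u)_{(2^j)}\|_{H^s}$, and since $2^{(\frac32+\delta+1)2j}2^{-2j}=2^{(\frac32+\delta)2j}$, after multiplying by the weight of $H_{s-1,\delta+1}$ and summing over $j$ this contribution is exactly $\|u\|_{H_{s,\delta}}^2$. The substance of the proof is thus the remaining term, namely the bound $\sum_j 2^{(\frac32+\delta)2j}\|(\phi_j u)_{(2^j)}\|_{H^{s-1}}^2\lesssim\|u\|_{H_{s,\delta}}^2$. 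Here I would exploit that on $\supp\phi_j$ the full sum $\Sigma:=\sum_k\psi_k$ satisfies $1\le\Sigma\le C$ and reduces to $\sum_{|k-j|\le 3}\psi_k$, so $\phi_j u=\sum_{|k-j|\le 3}(\phi_j/\Sigma)(\psi_k u)$ with $\phi_j/\Sigma$ again a cut-off localized at scale $2^j$ with the same derivative bounds. Rescaling by $2^j$, multiplication by $(\phi_j/\Sigma)_{(2^j)}$ is bounded on $H^{s-1}(\setR^3)$ uniformly in $j$, and $(\psi_k u)_{(2^j)}$ is the dilation of $(\psi_k u)_{(2^k)}$ by the bounded factor $2^{j-k}$; hence $\|(\phi_j u)_{(2^j)}\|_{H^{s-1}}\lesssim\sum_{|k-j|\le 3}\|(\psi_k u)_{(2^k)}\|_{H^{s-1}}\le\sum_{|k-j|\le 3}\|(\psi_k u)_{(2^k)}\|_{H^s}$. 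Squaring, applying Cauchy--Schwarz over the finite index set $\{|k-j|\le 3\}$, using $2^{(\frac32+\delta)2j}\lesssim 2^{(\frac32+\delta)2k}$ for $|j-k|\le 3$ and the finite overlap of the annuli, one gets this sum bounded by $\sum_k 2^{(\frac32+\delta)2k}\|(\psi_k u)_{(2^k)}\|_{H^s}^2=\|u\|_{H_{s,\delta}}^2$.

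Combining the two contributions yields $\|\partial_i u\|_{H_{s-1,\delta+1}}\le C\|u\|_{H_{s,\delta}}$; the sharp constant $1$ asserted in the statement is visible most directly for integer $s=m$ from the explicit weighted norm of Proposition \ref{equivalence}, since the derivatives $\partial^\beta\partial_i u$ with $|\beta|\le m-1$ form a subfamily of the $\partial^\alpha u$ with $1\le|\alpha|\le m$. I expect the only genuinely delicate step to be the treatment of the error term built from $\phi_j=2^j\partial_i\psi_j$: because $\partial_i\psi_j$ is not a member of the defining family $\{\psi_k\}$, one must re-localize $\phi_j u$ onto the $\psi_k$ with $|k-j|$ bounded and control the resulting multiplication and dilation operators on $H^{s-1}(\setR^3)$ with constants independent of $j$; everything else is routine bookkeeping with the scaling weights $2^{(\frac32+\delta)2j}$.
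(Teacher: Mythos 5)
Your argument is correct and is essentially the standard proof of this fact: the paper itself does not prove Proposition \ref{embedding1} but defers to \cite{BK3,triebel76:_spaces_kudrj2}, and your two devices --- commuting $\partial_i$ past $\psi_j$ at the cost of the factor $2^{-j}$ that exactly absorbs the shift $\delta\mapsto\delta+1$ in the weight, and re-localizing the commutator term $(\partial_i\psi_j)u$ onto the neighbouring $\psi_k$ by dividing by $\sum_k\psi_k$ --- are precisely the manipulations the paper uses elsewhere (compare the functions $\Psi_k=\left(\sum_j\psi_j\right)^{-1}\psi_k$ in the sketch of Lemma \ref{lem:1}). The one caveat is that your proof yields $\|\partial_i u\|_{H_{s-1,\delta+1}}\leq C\|u\|_{H_{s,\delta}}$ rather than the literal constant $1$ in the statement, and your appeal to Proposition \ref{equivalence} for integer $s$ only gives constant $1$ for the equivalent classical norm (\ref{eq:intro:5}), not for the dyadic norm (\ref{eq:weighted:4}); since the proposition is only ever invoked as a boundedness statement, this is immaterial.
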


\begin{prop}
 \label{Algebra}  Let $s_1,s_2\geq s$,
$s_1+s_2>s+\frac{3}{2}$, $s_1+s_2\geq0$  and
    $\delta_1+\delta_2\geq\delta-\frac{3}{2}$. If $u\in H_{s_1,\delta_1}$ and
$v\in H_{s_2,\delta_2}$, then 
\begin{equation}
    \label{eq:elliptic_part:12}
\left\|uv\right\|_{H_{s,\delta}}\lesssim \left\|u\right\|_{H_{s_1,\delta_1}}
\left\|v\right\|_{H_{s_2,\delta_2}}.
    \end{equation}
\end{prop}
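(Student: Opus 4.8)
The plan is to unravel the dyadic definition (\ref{eq:weighted:4}) of the $H_{s,\delta}$ norm, localize the product into a fixed annulus, and there invoke a \emph{dilation-uniform} multiplication estimate in the unweighted space $H^s(\setR^3)$; the weights then recombine by an elementary $\ell^2$ manipulation. \textbf{Step 1 (localization).} Since $\psi_j^2\,uv=(\psi_j u)(\psi_j v)$ pointwise and the scaling $f\mapsto f_{(2^j)}$ is a pointwise operation, $(\psi_j^2 uv)_{(2^j)}=(\psi_j u)_{(2^j)}\,(\psi_j v)_{(2^j)}$. Applying Proposition \ref{equivalence:1} with $\gamma=2$ gives
\begin{equation*}
  \|uv\|_{H_{s,\delta}}^2\;\lesssim\;\sum_{j=0}^\infty 2^{(\frac32+\delta)2j}\big\|(\psi_j u)_{(2^j)}\,(\psi_j v)_{(2^j)}\big\|_{H^s}^2 .
\end{equation*}

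\textbf{Step 2 (uniform multiplication estimate).} By the support conditions on $\psi_j$ in Definition \ref{def:weighted:3}, each of $(\psi_j u)_{(2^j)}$ and $(\psi_j v)_{(2^j)}$ is supported in the \emph{fixed} ball $\{|x|\le 2\}$, independently of $j$. The key analytic input is the classical fact that for $f,g$ supported in a fixed ball $B\subset\setR^3$,
\begin{equation*}
  \|fg\|_{H^s}\;\lesssim\;\|f\|_{H^{s_1}}\,\|g\|_{H^{s_2}}
\end{equation*}
whenever $s_1,s_2\ge s$, $s_1+s_2>s+\tfrac32$ and $s_1+s_2\ge0$, with a constant depending only on $B$ and on $s,s_1,s_2$, hence uniform in $j$. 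I would prove this by a Littlewood--Paley/paraproduct splitting of $fg$ into the low--high, high--low and high--high pieces: the first two are controlled using $s_i\ge s$ and $s_1+s_2\ge 0$, and the high--high piece is summable in $L^2$ with $s$ derivatives precisely because $s_1+s_2>s+\tfrac32$; alternatively one reduces $B$ to a bounded domain (or torus) containing it and quotes the corresponding multiplication theorem from \cite{runst96:_sobol_spaces_fract_order_nemyt}.

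\textbf{Step 3 (recombining the weights).} Set $a_j=\|(\psi_j u)_{(2^j)}\|_{H^{s_1}}$ and $b_j=\|(\psi_j v)_{(2^j)}\|_{H^{s_2}}$. Steps 1--2 give, after inserting and removing the factors $2^{(\frac32+\delta_1)j}$ and $2^{(\frac32+\delta_2)j}$,
\begin{equation*}
  \|uv\|_{H_{s,\delta}}^2\;\lesssim\;\sum_{j=0}^\infty 2^{-2j(\delta_1+\delta_2+\frac32-\delta)}\big(2^{(\frac32+\delta_1)j}a_j\big)^2\big(2^{(\frac32+\delta_2)j}b_j\big)^2 .
\end{equation*}
By hypothesis $\delta_1+\delta_2+\tfrac32-\delta\ge0$, so every exponential factor is $\le1$ for $j\ge0$; estimating the sequence $\big(2^{(\frac32+\delta_2)j}b_j\big)_j$ by its $\ell^\infty$ norm and the sequence $\big(2^{(\frac32+\delta_1)j}a_j\big)_j$ by its $\ell^2$ norm yields
\begin{equation*}
  \|uv\|_{H_{s,\delta}}^2\;\lesssim\;\Big(\sup_{j}2^{(\frac32+\delta_2)j}b_j\Big)^2\sum_{j}\big(2^{(\frac32+\delta_1)j}a_j\big)^2\;\le\;\|v\|_{H_{s_2,\delta_2}}^2\,\|u\|_{H_{s_1,\delta_1}}^2,
\end{equation*}
which is the desired estimate (\ref{eq:elliptic_part:12}).

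\textbf{Main obstacle.} The delicate point is Step 2: the multiplication estimate in $H^s(\setR^3)$ for compactly supported functions must hold under the sharp triple of conditions \emph{and} with a constant that does not degenerate under the dilations $f\mapsto f_{(2^j)}$. It is exactly this dilation-uniformity that forces us to first push everything into the fixed annulus $\{\tfrac14\le|x|\le 2\}$ (respectively the ball $\{|x|\le 2\}$ for $j=0$) and only then apply an unweighted product inequality; the rest of the argument is bookkeeping with geometric series. Everything else --- the equivalence of the $\psi_j$- and $\psi_j^2$-norms, the pointwise factorization, and the weight split --- is routine.
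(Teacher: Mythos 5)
Your proof is correct. The paper itself does not prove Proposition \ref{Algebra} --- it quotes it and refers to \cite{BK3, maxwell06:_rough_einst, triebel76:_spaces_kudrj2} --- but your argument (factor $\psi_j^2uv=(\psi_ju)(\psi_jv)$, pass to the $\gamma=2$ norm via Proposition \ref{equivalence:1}, apply the unweighted $H^{s_1}\times H^{s_2}\to H^s$ multiplication theorem on each dyadic piece, and recombine the weights using $\delta_1+\delta_2+\tfrac32-\delta\ge0$ with an $\ell^2$--$\ell^\infty$ split) is exactly the standard proof given in those references. The only cosmetic remark is that the dilation-uniformity you worry about in Step 2 is automatic: the constant in the multiplication theorem on all of $\setR^3$ is universal, so the compact support of the localized pieces is not actually needed there.
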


\begin{rem}
\label{rem:2}
If for a fixed constant $c_0$,  $u-c_0\in H_{s_1,\delta_1}$ and  $v\in
H_{s_2,\delta_2}$,  then  we can apply the multiplication property
(\ref{eq:elliptic_part:12}) to $(u-c_0)v$ and obtain
\begin{equation*}
 \left\| uv\right\|_{H_{s,\delta}}\lesssim \left(\left\|
u-c_0\right\|_{H_{s_1,\delta_1}} +|c_0|\right)\left\|
v\right\|_{H_{s_2,\delta_2}}.
\end{equation*} 
\end{rem}

\begin{prop}
  \label{Kateb}
  Let $u\in H_{s,\delta}\cap L^\infty$, $1<\beta$, $
  0<s<\beta +\frac{1}{2}$ and $\delta\in \mathbb{R}$, then
  \begin{equation*}
     \||u|^\beta\|_{H_{s,\delta}}\leq
    C(\|u\|_{L^\infty})
    \|u\|_{H_{s,\delta}}.
  \end{equation*}
\end{prop}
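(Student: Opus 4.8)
The plan is to reduce the weighted estimate, by localization and rescaling, to the corresponding composition estimate on the flat Bessel potential space $H^{s}(\setR^{3})$. The latter is the result of Runst and Sickel \cite{runst96:_sobol_spaces_fract_order_nemyt}: if $f\in H^{s}(\setR^{3})\cap L^{\infty}$ and $0<s<\beta+\frac{1}{2}$, then
\[
  \big\|\,|f|^{\beta}\,\big\|_{H^{s}}\leq C\big(\|f\|_{L^{\infty}}\big)\,\|f\|_{H^{s}},
\]
and we may assume that $r\mapsto C(r)$ is nondecreasing. Note that the admissible range $0<s<\beta+\frac{1}{2}$ is exactly the hypothesis of the proposition, and that the constant depends on $f$ only through its sup-norm; these two facts are what will make the summation below work.

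The bookkeeping rests on two elementary facts about the composition (Nemytskii) operator $f\mapsto|f|^{\beta}$: it commutes with dilations, $(|f|^{\beta})_{(2^{j})}=|f_{(2^{j})}|^{\beta}$, and, since the cutoffs $\psi_{j}$ are nonnegative, $\psi_{j}^{\beta}\,|u|^{\beta}=(\psi_{j}|u|)^{\beta}=|\psi_{j}u|^{\beta}$. Combining the two gives, for every $j$,
\[
  \big(\psi_{j}^{\beta}\,|u|^{\beta}\big)_{(2^{j})}=\big|(\psi_{j}u)_{(2^{j})}\big|^{\beta}.
\]
Accordingly, I would first apply Proposition \ref{equivalence:1} with $\gamma=\beta$ to the function $|u|^{\beta}$, obtaining $\big\||u|^{\beta}\big\|_{H_{s,\delta}}\lesssim\big\||u|^{\beta}\big\|_{H_{s,\delta,\beta}}$, and then expand the right-hand side blockwise according to (\ref{eq:const:14}) and the displayed identity:
\[
  \big\||u|^{\beta}\big\|_{H_{s,\delta,\beta}}^{2}=\sum_{j\geq0}2^{(\frac{3}{2}+\delta)2j}\,\big\|\,|(\psi_{j}u)_{(2^{j})}|^{\beta}\,\big\|_{H^{s}}^{2}.
\]
The point of the choice $\gamma=\beta$ in Proposition \ref{equivalence:1} is precisely that it lets the cutoff be absorbed into the composition operator.

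Next I would apply the flat composition estimate to each summand with $f=(\psi_{j}u)_{(2^{j})}$; this is legitimate since $u\in H_{s,\delta}\cap L^{\infty}$ forces $(\psi_{j}u)_{(2^{j})}\in H^{s}\cap L^{\infty}$. Because $0\leq\psi_{j}\leq1$ and dilations preserve the sup-norm, $\|(\psi_{j}u)_{(2^{j})}\|_{L^{\infty}}=\|\psi_{j}u\|_{L^{\infty}}\leq\|u\|_{L^{\infty}}$, so by monotonicity of $C$ the constant $C(\|(\psi_{j}u)_{(2^{j})}\|_{L^{\infty}})$ is bounded by $C(\|u\|_{L^{\infty}})$ uniformly in $j$. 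Consequently
\[
  \big\||u|^{\beta}\big\|_{H_{s,\delta,\beta}}^{2}\leq C(\|u\|_{L^{\infty}})^{2}\sum_{j\geq0}2^{(\frac{3}{2}+\delta)2j}\big\|(\psi_{j}u)_{(2^{j})}\big\|_{H^{s}}^{2}=C(\|u\|_{L^{\infty}})^{2}\,\|u\|_{H_{s,\delta}}^{2},
\]
where the last equality is Definition \ref{def:weighted:3}. Combining this with the first step yields the claim.

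I do not expect a genuine obstacle: the whole argument is a reduction to the scalar-variable composition estimate on $H^{s}(\setR^{3})$. The only points requiring care are the ones already flagged — quoting the Runst--Sickel estimate in the sharp range $s<\beta+\frac{1}{2}$, so that the hypothesis of the proposition transfers verbatim, and keeping the constant's dependence on $f$ confined to $\|f\|_{L^{\infty}}$, so that it passes unchanged through the $j$-summation.
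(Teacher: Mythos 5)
Your argument is correct. The paper itself does not prove Proposition \ref{Kateb} --- it defers to \cite{BK3, maxwell06:_rough_einst, triebel76:_spaces_kudrj2} --- but your reduction is exactly the route that the dyadic definition of $H_{s,\delta}$ is designed for: the identity $(\psi_j^{\beta}|u|^{\beta})_{(2^j)}=|(\psi_j u)_{(2^j)}|^{\beta}$ together with Proposition \ref{equivalence:1} at $\gamma=\beta$ converts the weighted estimate into the flat Runst--Sickel/Kateb bound applied term by term, and the uniformity in $j$ of the constant is correctly secured by $\|(\psi_j u)_{(2^j)}\|_{L^\infty}\leq\|u\|_{L^\infty}$. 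This matches the provenance the paper indicates (the sharp flat-space range $s<\beta+\tfrac12$ from \cite{runst96:_sobol_spaces_fract_order_nemyt}), so I see no gap.
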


\begin{prop}
 \label{Embedding}
  If $s>\frac{3}{2} + m$
and $\delta+\frac 3  2\geq\beta$, then
\begin{equation}
\label{eq:em:2} \| {u}\|_{C^m_\beta}\lesssim \|{u}\|_{H_{s,\delta}},
\end{equation}
where $\| {u}\|_{C^m_\beta}$ is given by (\ref{eq:3.1}).
\end{prop}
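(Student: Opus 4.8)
The plan is to reduce the assertion to the classical Sobolev embedding $H^{s}(\setR^{3})\hookrightarrow C^{m}_{b}(\setR^{3})$ — valid because $s>\frac{3}{2}+m$ — applied to the dyadic building blocks $g_{j}:=(\psi_{j}u)_{(2^{j})}$ occurring in Definition \ref{def:weighted:3}, and then to check that the dilation in that definition manufactures precisely the spatial weight $(1+|x|)^{\beta+|\alpha|}$ from the $C^{m}_{\beta}$-norm. We may assume $u\in H_{s,\delta}$, since otherwise there is nothing to prove; then each $g_{j}$ has finite $H^{s}$-norm.

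First I would extract the smoothness. Since $g_{j}\in H^{s}(\setR^{3})$, the classical embedding gives $g_{j}\in C^{m}_{b}(\setR^{3})$ with $\|g_{j}\|_{C^{m}}\lesssim\|g_{j}\|_{H^{s}}$, and undoing the scaling, $\psi_{j}u=g_{j}(2^{-j}\,\cdot\,)\in C^{m}(\setR^{3})$ is compactly supported. Because at every point some $\psi_{j}$ equals $1$, the locally finite sum $\phi:=\sum_{j}\psi_{j}$ is a $C^{\infty}$ function with $\phi\ge 1$, so $u=\phi^{-1}\sum_{j}\psi_{j}u$ is a locally finite sum of $C^{m}$ functions; hence $u\in C^{m}(\setR^{3})$ and each map $x\mapsto(1+|x|)^{\beta+|\alpha|}|\pa^{\alpha}u(x)|$ is continuous. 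Moreover, on the set where $\psi_{j}\equiv 1$ — the closed annulus $\{2^{j-1}\le|x|\le 2^{j}\}$ for $j\ge 1$, and the ball $\{|x|\le 1\}$ for $j=0$ — the positive-order derivatives of $\psi_{j}$ vanish on its interior (which is dense in it), so by continuity $\pa^{\alpha}u=\pa^{\alpha}(\psi_{j}u)$ throughout the set for every $|\alpha|\le m$.

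The heart of the argument is the scaling bookkeeping. From $g_{j}(x)=(\psi_{j}u)(2^{j}x)$ one has $\pa^{\alpha}(\psi_{j}u)(y)=2^{-j|\alpha|}(\pa^{\alpha}g_{j})(2^{-j}y)$, hence for $|\alpha|\le m$
\[
\|\pa^{\alpha}(\psi_{j}u)\|_{L^{\infty}}=2^{-j|\alpha|}\|\pa^{\alpha}g_{j}\|_{L^{\infty}}\lesssim 2^{-j|\alpha|}\|g_{j}\|_{H^{s}} .
\]
Now take $x_{0}$ with $|x_{0}|\ge 1$ and choose $j\ge 1$ with $2^{j-1}\le|x_{0}|\le 2^{j}$; then $1+|x_{0}|$ is comparable to $2^{j}$, so $(1+|x_{0}|)^{\beta+|\alpha|}\lesssim 2^{j(\beta+|\alpha|)}$ with a constant depending only on $\beta$ and $m$. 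Using $\pa^{\alpha}u(x_{0})=\pa^{\alpha}(\psi_{j}u)(x_{0})$ we obtain
\[
(1+|x_{0}|)^{\beta+|\alpha|}\,|\pa^{\alpha}u(x_{0})|\lesssim 2^{j(\beta+|\alpha|)}\,2^{-j|\alpha|}\,\|g_{j}\|_{H^{s}}=2^{j\beta}\,\|g_{j}\|_{H^{s}}\le 2^{j(\frac{3}{2}+\delta)}\,\|g_{j}\|_{H^{s}} ,
\]
where the last step uses the hypothesis $\beta\le\delta+\frac{3}{2}$ together with $j\ge 0$. The right-hand side is $\le\big(\sum_{k\ge 0}2^{2k(\frac{3}{2}+\delta)}\|g_{k}\|_{H^{s}}^{2}\big)^{1/2}=\|u\|_{H_{s,\delta}}$. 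For $|x_{0}|<1$ one argues in the same way with $j=0$: there $(1+|x_{0}|)^{\beta+|\alpha|}$ is bounded by a constant and $|\pa^{\alpha}u(x_{0})|=|\pa^{\alpha}(\psi_{0}u)(x_{0})|\lesssim\|g_{0}\|_{H^{s}}\le\|u\|_{H_{s,\delta}}$. Taking the supremum over $x_{0}$ and summing over the finitely many multi-indices $|\alpha|\le m$ gives $\|u\|_{C^{m}_{\beta}}\lesssim\|u\|_{H_{s,\delta}}$.

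I expect the only real obstacle to be this weight-matching step: one must see that the factor $2^{j|\alpha|}$ produced by differentiating the dilated piece exactly cancels the $|\alpha|$-part of $(1+|x|)^{\beta+|\alpha|}\sim 2^{j(\beta+|\alpha|)}$, leaving just $2^{j\beta}$, and that the condition $\delta+\frac{3}{2}\ge\beta$ is precisely what lets $2^{j\beta}$ be absorbed into the weight $2^{j(\frac{3}{2}+\delta)}$ carried by the $H_{s,\delta}$-norm — this is the single place the assumption on $\delta$ is used. The remaining ingredients (the classical embedding, undoing the scaling, and the identification $\pa^{\alpha}u=\pa^{\alpha}(\psi_{j}u)$ on $\{\psi_{j}\equiv 1\}$) are routine; if preferred, one could instead run the whole argument with $\psi_{j}^{\gamma}$ in place of $\psi_{j}$ and invoke Proposition \ref{equivalence:1}, but this brings no simplification here.
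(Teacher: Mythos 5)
Your argument is correct: the reduction to the unweighted embedding $H^{s}\hookrightarrow C^{m}_{b}$ applied to the rescaled dyadic pieces $(\psi_{j}u)_{(2^{j})}$, the cancellation of the $2^{j|\alpha|}$ scaling factor against the $|\alpha|$-part of the weight, and the use of $\beta\le\delta+\frac{3}{2}$ to absorb $2^{j\beta}$ into the dyadic weight of the $H_{s,\delta}$-norm are exactly the right ingredients, and the continuity argument identifying $\pa^{\alpha}u$ with $\pa^{\alpha}(\psi_{j}u)$ on the closed annuli where $\psi_{j}\equiv1$ is sound. The paper itself does not prove this proposition but only cites the literature; your proof is the standard dyadic-rescaling argument used in those references, so there is nothing to object to.
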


\begin{prop}
 \label{Moser}
 Let $F:\setR^m\to\setR^l$ be a $C^{N+1}$-function such that $F(0)=0$ and where
 $N\geq [s]+1$. Then there is a constant $C$ such that for any $u\in
 H_{s,\delta}$
\begin{equation}
\label{eq:14}
\|F(u)\|_{H_{s,\delta}}\leq
C\|F\|_{C^{N+1}}\left(1+\|u\|_{L^\infty}^N\right)
\|u\|_{H_{s,\delta}}.
\end{equation}
\end{prop}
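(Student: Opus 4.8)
The plan is to reduce this weighted composition estimate to the classical unweighted Moser estimate in $H^s(\setR^3)$, exploiting the scaling built into the norm (\ref{eq:weighted:4}): after rescaling by $2^j$ the cutoff $\psi_j$ becomes a function supported in a \emph{fixed} ball whose $C^m_b$ norms are bounded uniformly in $j$, precisely because $|\pa^\alpha\psi_j|\le C_\alpha 2^{-|\alpha|j}$. If $\|u\|_{L^\infty}=\infty$ there is nothing to prove, so assume $u\in L^\infty$, which for $s>\tfrac32$ is automatic by Proposition \ref{Embedding}. The three steps are: (1) localize with the dyadic cutoffs and rescale; (2) establish a $j$-uniform unweighted Moser estimate for the rescaled pieces; (3) sum over $j$.

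For step (1) I would introduce an auxiliary family $\{\tilde\psi_j\}\subset C_0^\infty(\setR^3)$ with $0\le\tilde\psi_j\le1$, $\tilde\psi_j\equiv1$ on $\supp\psi_j$, $\supp\tilde\psi_j$ in a dyadic annulus of the same type as $\supp\psi_j$ (slightly enlarged), and $|\pa^\alpha\tilde\psi_j|\le C_\alpha 2^{-|\alpha|j}$ with $C_\alpha$ independent of $j$ (take $\tilde\psi_j(x)=\Psi(2^{-j}x)$ for a fixed bump $\Psi$, with the obvious modification at $j=0$). Since $\tilde\psi_j\equiv1$ on $\supp\psi_j$ and $\psi_j$ vanishes off it, one has the pointwise identity $\psi_j F(u)=\psi_j F(\tilde\psi_j u)$ for every $j$, while $F(0)=0$ guarantees that $F$ of a compactly supported function again lies in $H^s$. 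Scaling by $2^j$ gives
\[
(\psi_j F(u))_{(2^j)}=(\psi_j)_{(2^j)}\,F\big((\tilde\psi_j)_{(2^j)}\,u_{(2^j)}\big).
\]
For step (2) put $V_j:=(\tilde\psi_j)_{(2^j)}u_{(2^j)}=(\tilde\psi_j u)_{(2^j)}$; then $V_j$ is compactly supported (in a ball independent of $j$) and $\|V_j\|_{L^\infty}\le\|u\|_{L^\infty}$ since $0\le\tilde\psi_j\le1$. Applying the classical Moser/composition estimate in $H^s(\setR^3)$ — valid because $F\in C^{N+1}$ with $N\ge[s]+1$; see e.g.\ \cite{runst96:_sobol_spaces_fract_order_nemyt} — to $F(V_j)$, and then multiplying by $(\psi_j)_{(2^j)}$, whose $C^{[s]+1}_b$ norm, and hence its norm as a pointwise multiplier on $H^s$, is bounded uniformly in $j$, yields
\[
\|(\psi_j F(u))_{(2^j)}\|_{H^s}\le C\,\|F\|_{C^{N+1}}\big(1+\|u\|_{L^\infty}^N\big)\,\|(\tilde\psi_j u)_{(2^j)}\|_{H^s},
\]
with $C$ independent of $j$ (the extra derivative $N+1$ and the room $[s]+1$ in the hypothesis supply exactly the slack needed to absorb this multiplier).

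For step (3) I would substitute the last bound into (\ref{eq:weighted:4}) and sum over $j$:
\[
\|F(u)\|_{H_{s,\delta}}^2\le C^2\,\|F\|_{C^{N+1}}^2\big(1+\|u\|_{L^\infty}^N\big)^2\sum_{j=0}^\infty 2^{(\frac{3}{2}+\delta)2j}\,\|(\tilde\psi_j u)_{(2^j)}\|_{H^s}^2 .
\]
Since $\{\tilde\psi_j\}$ is an admissible cutoff sequence in the sense of \cite{triebel76:_spaces_kudrj2} (it has the required scaling bounds and $\sum_j\tilde\psi_j\ge1$ with finite overlap), the last sum is equivalent to $\|u\|_{H_{s,\delta}}^2$, cf.\ Proposition \ref{equivalence:1}; taking square roots gives the assertion. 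The main point requiring care is the uniformity in $j$ of every constant that appears — the Moser constant for the compactly supported $V_j$, the $H^s$-multiplier norm of $(\psi_j)_{(2^j)}$, and the norm-equivalence constant for the family $\{\tilde\psi_j\}$ — and this is exactly what the $2^j$-scaling in Definition \ref{def:weighted:3} is designed to deliver; once the uniformity is in place, the remaining ingredients are the classical flat Moser estimate and routine bookkeeping.
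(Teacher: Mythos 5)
Your argument is correct and is essentially the standard proof of this estimate: the paper itself only quotes Proposition \ref{Moser} without proof (deferring to \cite{BK3} and \cite{triebel76:_spaces_kudrj2}), and the proof in those sources follows exactly your localize--rescale--apply-the-flat-Moser-estimate--resum scheme, with the uniformity in $j$ coming from the scaling bounds $|\partial^\alpha\psi_j|\le C_\alpha 2^{-|\alpha|j}$ and from the independence (up to equivalence) of the $H_{s,\delta}$-norm from the choice of admissible cutoff system. The one ingredient you import as a black box --- the unweighted composition estimate in the quantitative form $C\|F\|_{C^{N+1}}\left(1+\|u\|_{L^\infty}^N\right)\|u\|_{H^s}$ for $F\in C^{N+1}$, $F(0)=0$, $N\ge[s]+1$ --- is indeed available, so there is no gap.
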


\begin{prop}
 \label{Density}\quad
\begin{enumerate}
\item[(a)] The class $C_0^\infty(\mathbb{R}^3)$ is dense in
$H_{s,\delta}$.
\item[(b)] Given $u\in H_{s,\delta}$, $s'>s\geq 0$ and $\delta'\geq\delta$.
Then
for $\rho>0$  there is $u_\rho\in C_0^\infty(\mathbb{R}^3)$ and  a
positive constant $C(\rho)$ such that
\begin{equation*}
\|u_\rho -u\|_{H_{s,\delta}}\leq \rho\quad \text{and}\quad
\|u_\rho\|_{H_{s',\delta'}}\leq C(\rho) \|u\|_{H_{s,\delta}}.
\end{equation*}
\end{enumerate}
\end{prop}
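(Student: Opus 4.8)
The plan is to establish both parts by the classical two-step device, \emph{truncate and then mollify}, carried out inside the annular decomposition of Definition \ref{def:weighted:3}. Everything rests on two localization observations, which I would isolate first. \emph{(i) Uniform truncation.} For $N\in\setN$ fix a radial $\chi_N\in C_0^\infty(\setR^3)$ with $\chi_N\equiv 1$ on $\{|x|\le 2^N\}$, $\supp\chi_N\subset\{|x|\le 2^{N+1}\}$ and $|\pa^\alpha\chi_N(x)|\le C_\alpha 2^{-|\alpha| N}$. On the support of the rescaled piece $(\psi_j u)_{2^j}$ the rescaled multiplier $(\chi_N)_{2^j}(x)=\chi_N(2^j x)$ is identically $1$ for $j\lesssim N$, identically $0$ for $j\gtrsim N$, and in the $O(1)$-many transitional indices $j\approx N$ it has $C^k$-norms bounded independently of $N$; hence multiplication by $\chi_N$ is bounded on $H_{s,\delta}$ uniformly in $N$, and $(1-\chi_N)u$ contributes only through the transitional and larger indices of the series (\ref{eq:weighted:4}), so that $\|(1-\chi_N)u\|_{H_{s,\delta}}\to0$ as $N\to\infty$, being the tail of a convergent series. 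Therefore the compactly supported elements of $H_{s,\delta}$ are dense. \emph{(ii) Weighted $\simeq$ unweighted on a fixed ball.} If $v$ is supported in $\{|x|\le R\}$ with $R\ge1$, then only boundedly many indices $j$ contribute to (\ref{eq:weighted:4}); since each $\psi_j v$ is a $C_0^\infty$-multiple of $v$ and, over that finite range, the factor $2^{(\frac32+\delta'')2j}$ together with the dilation $f\mapsto f_{2^j}$ distort the $H^\tau$-norm only by bounded constants (for $\tau\ge0$), the finite-overlap of $\{\psi_j\}$ yields constants $0<c(R,\tau,\delta'')\le C(R,\tau,\delta'')$, independent of $v$, with $c\,\|v\|_{H^\tau}\le\|v\|_{H_{\tau,\delta''}}\le C\,\|v\|_{H^\tau}$.

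With (i) and (ii) in hand, part (a) is immediate: truncate $u$ to $v=\chi_N u$ with $\|u-v\|_{H_{s,\delta}}$ arbitrarily small; $v$ is compactly supported, so by (ii) and the classical density of $C_0^\infty(\setR^3)$ in $H^s$ a standard mollification $u_\rho:=\phi_\varepsilon*v$ (with $\phi\in C_0^\infty(\setR^3)$, $\int\phi=1$, $\phi_\varepsilon=\varepsilon^{-3}\phi(\cdot/\varepsilon)$, $\varepsilon\le1$) lies in $C_0^\infty(\setR^3)$ and satisfies $\|u_\rho-v\|_{H_{s,\delta}}\le C\,\|u_\rho-v\|_{H^s}\to0$ by (ii), using $\|\phi_\varepsilon*v-v\|_{H^s}\to0$.

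For part (b) the same construction is made quantitative. Given $\rho>0$, use (i) to choose $N=N(\rho)$ with $\|(1-\chi_N)u\|_{H_{s,\delta}}\le\rho/2$ and set $v:=\chi_N u$, so $v$ is supported in $\{|x|\le 2^{N+1}\}$ and $\|v\|_{H_{s,\delta}}\le C_0\|u\|_{H_{s,\delta}}$ with $C_0$ absolute. Let $\phi$, $\phi_\varepsilon$ be as above; then $\phi_\varepsilon*v$ and $\phi_\varepsilon*v-v$ are all supported in the fixed ball $\{|x|\le 2^{N+2}\}$, and by (ii) there are $c_\rho,C_\rho>0$ with $c_\rho\|\cdot\|_{H^\tau}\le\|\cdot\|_{H_{\tau,\delta''}}\le C_\rho\|\cdot\|_{H^\tau}$ on that ball for $\tau\in\{s,s'\}$ and $\delta''\in\{\delta,\delta'\}$. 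The only remaining ingredient is the elementary smoothing estimate on $\setR^3$: $\|\phi_\varepsilon*v-v\|_{H^s}\to0$ as $\varepsilon\to0$, and
\begin{equation*}
\|\phi_\varepsilon*v\|_{H^{s'}}\lesssim \varepsilon^{-(s'-s)}\|v\|_{H^s},
\end{equation*}
which follows from $\widehat{\phi_\varepsilon*v}(\xi)=\hat\phi(\varepsilon\xi)\hat v(\xi)$ and $(1+|\xi|^2)^{s'}|\hat\phi(\varepsilon\xi)|^2\lesssim \varepsilon^{-2(s'-s)}(1+|\xi|^2)^{s}$ (valid for $\varepsilon\le1$ since $\hat\phi$ is rapidly decreasing and $s'-s>0$). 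Now pick $\varepsilon=\varepsilon(\rho)\le1$ with $C_\rho\|\phi_\varepsilon*v-v\|_{H^s}\le\rho/2$ and put $u_\rho:=\phi_\varepsilon*v\in C_0^\infty(\setR^3)$; then
\begin{equation*}
\|u_\rho-u\|_{H_{s,\delta}}\le\|\phi_\varepsilon*v-v\|_{H_{s,\delta}}+\|(1-\chi_N)u\|_{H_{s,\delta}}\le\tfrac{\rho}{2}+\tfrac{\rho}{2}=\rho,
\end{equation*}
\begin{equation*}
\|u_\rho\|_{H_{s',\delta'}}\le C_\rho\|u_\rho\|_{H^{s'}}\lesssim C_\rho\,\varepsilon^{-(s'-s)}\|v\|_{H^s}\le C_\rho\,\varepsilon^{-(s'-s)}c_\rho^{-1}C_0\,\|u\|_{H_{s,\delta}},
\end{equation*}
and since $N$ and $\varepsilon$ depend only on $\rho$, the last constant is the desired $C(\rho)$. (The assumption $\delta'\ge\delta$ is consistent with, though not essential to, this argument once the approximant is compactly supported; the role of $s'>s$ is precisely to make the smoothing estimate above meaningful.)

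The step I expect to require the most care is the pair of localization lemmas (i) and (ii): controlling how the cut-and-rescale operation $u\mapsto(\psi_j u)_{2^j}$ of Definition \ref{def:weighted:3} interacts with multiplication by the uniformly bounded cutoffs $\chi_N$ and with the large fixed dilations $2^j$, and in particular showing that the resulting multiplier bounds are independent of the annular index $j$. These facts are, however, essentially contained in \cite{triebel76:_spaces_kudrj2, maxwell06:_rough_einst, BK3}, and once they are available the remainder reduces to the two textbook facts about $H^s(\setR^3)$ used above: density of $C_0^\infty$ and the mollifier smoothing estimate. A subsidiary point to keep straight is the order of quantifiers: the truncation radius $N$ must be fixed first, and only then may the mollification parameter $\varepsilon$ be chosen, since $\varepsilon$ is allowed to depend on the ($N$-dependent) equivalence constants $c_\rho,C_\rho$.
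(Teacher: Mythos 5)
Your proof is correct. The paper itself gives no proof of Proposition \ref{Density} --- it is quoted from \cite{triebel76:_spaces_kudrj2, maxwell06:_rough_einst, BK3} --- and your truncate-and-mollify argument, with the two localization facts (uniform boundedness of the cutoffs $\chi_N$ on $H_{s,\delta}$ and equivalence of the weighted and unweighted norms on compactly supported functions) reduced to the references, is exactly the standard route taken there. The only imprecision is the closing remark that $N$ and $\varepsilon$ ``depend only on $\rho$'': they also depend on $u$ (through the decay rate of the tail and of $\|\phi_\varepsilon*v-v\|_{H^s}$), but since $u$ is fixed before $\rho$ in the statement, and the proposition is only ever applied to a fixed initial datum, this does not affect the result.
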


\subsection{Product spaces.}

The unknown of system (\ref{eq:3.4}) is a vector valued function 
\begin{equation*}
 U=\left(v,\partial_t v,\partial_x v,W\right),
\end{equation*}
where $v=g_{\alpha\beta}-\eta_{\alpha\beta}$ stands for the field variables and
$W=(w,u^\alpha-e_0^\alpha)$ stands for the fluid variables. We consider it in
the space
\begin{equation}
\label{eq:5.7}
 X_{s,\delta}:=H_{s,\delta}\times H_{s,\delta+1}\times H_{s,\delta+1}\times
H_{s+1,\delta+1},
\end{equation}
with the  norm (see (\ref{eq:const:14}))
\begin{equation}
\begin{split}
 \left\|U\right\|_{X_{s,\delta}}^2 &
=\left\|v\right\|_{H_{s,\delta,2}}^2 +
\left\|\partial_t v\right\|_{H_{s,\delta+1,2}}^2 +
\left\|\partial_x v\right\|_{H_{s,\delta+1,2}}^2 + 
\left\|W\right\|_{H_{s+1,\delta+1,2}}^2.
\end{split}
\end{equation}

\begin{rem}
\label{rem:1}
 Note that if $U=\left(v,\partial_t v,\partial_x v,W\right)\in X_{s,\delta}$,
then  $v\in H_{s+1,\delta}$. Because $U\in X_{s,\delta}$ implies that
$v\in H_{s,\delta}$
and $\partial_x v\in H_{s,\delta+1}$, so by the
integral representation of the norm $H_{s,\delta}$ (see  \cite[\S2)]{BK3}), 
we obtain that
\begin{equation*}
 \left\| v \right\|_{H_{s+1,\delta}}\lesssim \left(\left\|
v\right\|_{H_{s,\delta}}+\left\| \partial_x v \right\|_{H_{s,\delta+1}}
\right).
\end{equation*} 
\end{rem}

\section{Energy estimates}
\label{sec:energy-estimates}

In this section we will derive the energy estimates for
a linear symmetric hyperbolic system, a system which we have obtained by
linearising  (\ref{eq:3.4}).
So we consider 
\begin{equation}
\label{eq:5.1}
 \mathcal{A}^0\partial_t U=\left(\mathcal{A}^a+\mathcal{C}^a\right)\partial_a
U+\mathcal{B}\left(
\begin{array}{l}
v\\
\partial_t v\\ \partial_x v
\end{array}\right)
+ \mathcal{F}+\mathcal{D},
\end{equation} 
where $U=(v,\partial_t v,\partial_x v,W)$,  the matrices 
$\mathcal{A}^0$, $\mathcal{A}^a$, $\mathcal{B}$ and $\mathcal{C}^a$  have the
same structural form as the corresponding matrices in (\ref{eq:3.4}),
$\mathcal{C}^a$ is a constant matrix, and the vectors
$\mathcal{F}$ and $\mathcal{D}$ have  the form $(0,f,0,0)$  and $(0,d_2,
d_3,d_4)$ respectively.
     
\begin{con}
  \label{ass:1}
  All the matrices have the same block structure as (\ref{eq:3.7}),
(\ref{eq:3.9})
  and (\ref{eq:3.6}) and satisfy:
  \begin{subequations}
    \begin{eqnarray}
      &&\ 
      \left(\mathcal{A}^0(t,\cdot)-{\bf e}_{55}\right),
\mathcal{A}^a(t,\cdot)\in
      H_{s+1,\delta};\\ &&\ \
      \label{eq:5.3}
\exists \  c_0 \geq1 \ \mbox{such that }\
      {c_0}^{-1}V^TV\leq V^T\mathcal{A}^0V\leq c_0 V^TV,\quad \forall V\in
\setR^{55};\ 
      \\ &&\ \
      \label{eq:5.9}
      \partial_t \mathcal{A}^0(t,\cdot)\in L^\infty;\\
      &&\ \
      {\bf b}_{2j}(t,\cdot)\protect {, {\bf b}_{4j}(t,\cdot)}\in
H_{s,\delta+1}, \ \ \ j=2,3,4,5;
      \\
      &&      \mathcal{F}(t,\cdot),  \mathcal{D}(t,\cdot)\in H_{s,\delta+1}.
    \end{eqnarray}
  \end{subequations}
\end{con}

\subsection{$X_{s,\delta}$--energy estimates}
\label{sec:x_s-delta-energy}

We turn now to the definition of an inner--product of the space $X_{s,\delta}$
which takes into account the structure of the matrix $\mathcal{A}^0$ of the
system (\ref{eq:5.1}).

Let $F(u)$ denote the Fourier transform of a distribution  $u$ and set
\begin{equation*}
 \Lambda^s(u)=
(1-\Delta)^{\frac{s}{2}}=F^{-1}\left(\left(1+|\xi|^2\right)^{\frac{s}{2}}
F\right)(u).
\end{equation*}
The standard inner--product of the Bessel-potential spaces $H^s$ is
\begin{equation*}
 \langle u_1,u_2\rangle_s =\langle
\Lambda^s(u_1),\Lambda^s(u_2)\rangle_{L^2}.
\end{equation*}
Taking into account the term--wise definition of the norm (\ref{eq:weighted:4}),
we define the  inner--product of $H_{s,\delta}$ as follows:
 \begin{equation}
\label{eq:5.2}
       \langle u_1,u_2\rangle_{s,\delta} :=\sum_{j=0}^\infty
       2^{\left(\delta+\frac{3}{2}\right)2j}\left\langle
         \Lambda^{s}\left(\psi_j^2
           u_1\right)_{2^j}, \Lambda^{s}\left(\psi_j^2
           u_2\right)_{2^j}\right\rangle_{L^2},
\end{equation}
where $(u)_{2^j}$ denotes scaling by $2^j$.  By Proposition \ref{equivalence:1}, $ \langle
u,u\rangle_{s,\delta}=\left\|u\right\|_{H_{s,\delta,2}}^2\simeq
\left\|u\right\|_{H_{s,\delta}}^2$.
To each component of the space
\begin{equation*}
 X_{s,\delta}:=H_{s,\delta}\times  H_{s,\delta+1}\times
H_{s,\delta+1} \times
H_{s+1,\delta+1}
\end{equation*}
we assign its own inner--product. Since $\mathcal{A}^0=({\bf a}_{ij}^0)$, where
${\bf a}_{ij}^0$ is the zero matrix for $i\not=j$, ${\bf a}_{ii}^0$ is
 the identity for $i=1,2$, we assign to the first two components the
inner--product (\ref{eq:5.2}), while for the other terms we insert
$\mathcal{A}^0$ termwise.

 \begin{defn}[Inner--product in $X_{s,\delta}$]
Let $U_i=(v_i,\partial_t
v_i,\partial_x v_i, W_i)\in X_{s,\delta}$, $i=1,2$ and assume that the matrix
$\mathcal{A}^0$ satisfies Assumption \ref{ass:1}, then we denote 
the  {\it inner--product of $X_{s,\delta}$}:
\begin{equation}
  \label{eq:5.6}
  \begin{split}
    \left\langle U_1,U_2\right\rangle_{X_{s,\delta,\mathcal{A}^0}}& :=
    \langle   v_1,v_2\rangle_{s,\delta}+  \langle \partial_t v_1,\partial_t
    v_2\rangle_{s,\delta+1} \\ &+ \left\langle\partial_x v_1, \partial_x v_2
    \right\rangle_{s,\delta+1,{\bf a}_{33}^0} +
    \left\langle W_1, W_2\right\rangle_{s+1,\delta+2,{\bf a}_{44}^0},
  \end{split}
\end{equation} 
where the terms are defined in the following way:

\begin{itemize}

  \item {\it An inner--product of $H_{s,\delta}$} of the form:
  \begin{math}
    \langle v_1,v_2\rangle_{s,\delta}
  \end{math}, where the inner--product $\langle
  \cdot,\cdot\rangle_{s,\delta}$ is defined by (\ref{eq:5.2});
  
  \item {\it An inner--product of $ H_{s,\delta+1} $} of the form:
  \begin{math}
    \langle \partial_t v_1,\partial_t v_2\rangle_{s,\delta+1}
  \end{math}, where $\langle \cdot,\cdot\rangle_{s,\delta+1} $ is
  defined by (\ref{eq:5.2}) with $\delta+1$;

  \item {\it An inner-product on $ H_{s,\delta+1} $} of the form:
  \begin{equation}
    \label{eq:5.8}
    \begin{split}
      \left\langle
        \partial_x v_1, \partial_x v_2\right\rangle_{s,\delta+1,{\bf
          a}_{33}^0} := \sum_{j=0}^\infty
      2^{\left(\delta+1+\frac{3}{2}\right)2j}\left\langle
        \Lambda^s\left(\psi_j^2 \partial_x v_1\right)_{2^j}, \left(
          {\bf a}_{33}^0 \right)_{2^j} \Lambda^s\left(\psi_j^2
          \partial_x v_2 \right)_{2^j}\right\rangle_{L^2};
    \end{split}
  \end{equation}

  \item {\it An inner--product of $H_{s+1,\delta+1}$} of
  the form:
  \begin{equation}
    \label{eq:5.11}
    \begin{split}
      \left\langle W_1, W_2\right\rangle_{s+1,\delta+2,{\bf a}_{44}^0}
      := \sum_{j=0}^\infty
      2^{\left(\delta+1+\frac{3}{2}\right)2j}\left\langle
        \Lambda^{s+1}\left(\psi_j^2 W_1\right)_{2^j}, \left( {\bf
            a}_{44}^0\right)_{2^j} \Lambda^{s+1}\left(\psi_j^2
          W_2\right)_{2^j}\right\rangle_{L^2};
    \end{split}
  \end{equation}
\end{itemize}
\end{defn}
We denote by $\left\| U\right\|_{X_{s,\delta,\mathcal{A}^0}}$ the norm
associated with the inner--product (\ref{eq:5.6}). Since the matrix
$\mathcal{A}^0$ satisfies  (\ref{eq:5.3}), the
following equivalence
\begin{equation}
\label{eq:5.4}
 \left\|U\right\|_{X_{s,\delta}}\lesssim
\left\|U\right\|_{X_{s,\delta,\mathcal{A}^0}}\lesssim
\left\|U\right\|_{X_{s,\delta}} 
\end{equation}
holds. In order to simplify the notation we set $U(t)=U(t,x^1,x^2,x^3)$. 
\vskip 5mm
\begin{lem}
\label{lem:1}
 Let $s>\frac{3}{2}$,
$\delta\geq-\frac{3}{2}$ and assume the coefficients of
(\ref{eq:5.1})  satisfy Assumptions \ref{ass:1}. If $U(t)\in
C_0^\infty(\setR^3)$ is a solution of (\ref{eq:5.1}), then
\begin{equation}
\label{eq:5.5}
 \dfrac{d}{dt} \left\langle
U(t),U(t)\right\rangle_{X_{s,\delta,\mathcal{A}^0}}\leq
Cc_0 \left( \left\langle
U(t),U(t)\right\rangle_{X_{s,\delta,\mathcal{A}^0}}+1\right),
\end{equation} 
where the constant $C$ depends on the corresponding  norms of the
coefficients, $s$ and $\delta$.  
\end{lem}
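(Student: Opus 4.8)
The plan is to obtain the differential inequality (\ref{eq:5.5}) by differentiating each of the four summands of $\langle U(t),U(t)\rangle_{X_{s,\delta,\mathcal{A}^0}}$ in turn, applying the equation (\ref{eq:5.1}), and integrating by parts at the level of each dyadic piece. Since $U(t)\in C_0^\infty(\setR^3)$, all the manipulations below are justified and the series in (\ref{eq:5.2}), (\ref{eq:5.8}), (\ref{eq:5.11}) converge with only finitely many nonzero terms for each fixed $x$-support consideration; one keeps track of constants uniformly in $j$.

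First I would treat the two components carrying the plain inner--product $\langle\cdot,\cdot\rangle_{s,\delta}$ and $\langle\cdot,\cdot\rangle_{s,\delta+1}$, namely $v$ and $\partial_t v$. Here $\frac{d}{dt}\langle v,v\rangle_{s,\delta}=2\langle\partial_t v,v\rangle_{s,\delta}$, and the first row of (\ref{eq:5.1}) gives $\partial_t v=\partial_t g_{\alpha\beta}$, so this term is controlled by $\langle U,U\rangle_{X_{s,\delta,\mathcal{A}^0}}$ via Cauchy--Schwarz and Proposition~\ref{equivalence:1}. For $\langle\partial_t v,\partial_t v\rangle_{s,\delta+1}$ one uses $\mathcal{A}^0\partial_t U=(\mathcal{A}^a+\mathcal{C}^a)\partial_a U+\mathcal{B}(v,\partial_t v,\partial_x v)^T+\mathcal{F}+\mathcal{D}$: the $\mathcal{B}$, $\mathcal{F}$, $\mathcal{D}$ contributions are estimated by the multiplication property (Proposition~\ref{Algebra}), Remark~\ref{rem:2}, the embedding $H_{s,\delta}\hookrightarrow C^0_{\delta+3/2}$ (Proposition~\ref{Embedding}), and Assumption~\ref{ass:1}, producing the additive ``$+1$'' on the right of (\ref{eq:5.5}) (from the inhomogeneous terms) together with a factor linear in $\langle U,U\rangle$; the spatial-derivative terms $(\mathcal{A}^a+\mathcal{C}^a)\partial_a U$ are the ones requiring integration by parts, and are deferred to the next step.

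The heart of the matter is the term-wise handling of the weighted $\mathcal{A}^0$-inner--products (\ref{eq:5.8}) and (\ref{eq:5.11}). Differentiating, say, $\langle\partial_x v_1,\partial_x v_2\rangle_{s,\delta+1,\mathbf{a}_{33}^0}$ along the solution produces three types of terms: (i) a term with $\partial_t(\mathbf{a}_{33}^0)$ inside, controlled by $\partial_t\mathcal{A}^0\in L^\infty$ (Assumption (\ref{eq:5.9})) after commuting $\Lambda^s$ with the $L^\infty$ multiplier via a commutator/Moser estimate (Proposition~\ref{Moser}); (ii) ``good'' symmetric terms where, because $\mathbf{a}_{33}^0$ and the $\mathbf{a}_{33}^a$ are symmetric and the spatial operator $\mathcal{A}^a+\mathcal{C}^a$ is first order, an integration by parts in $x$ on each dyadic block moves one derivative across and leaves a residual of order zero — here one must also account for the scaling $x\mapsto 2^j x$ and for the derivatives of $\psi_j^2$, which by the bound $|\partial^\alpha\psi_j|\le C_\alpha 2^{-|\alpha|j}$ contribute at most a bounded (in $j$) factor; and (iii) commutator terms $[\Lambda^s,\psi_j^2]$ and $[\Lambda^s,\text{coefficient}]$, estimated again by Proposition~\ref{Moser} and the algebra property, all of which are absorbed into $C\langle U,U\rangle_{X_{s,\delta,\mathcal{A}^0}}$. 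The same scheme applies verbatim to (\ref{eq:5.11}) with $\Lambda^{s+1}$ in place of $\Lambda^s$ and the block $\mathbf{a}_{44}^0$, using that the fluid coefficients are $C^\infty$ in $(g,w,u)$ so that Proposition~\ref{Moser} is applicable with $N=[s]+1$. Finally one sums the four estimates, uses (\ref{eq:5.4}) to pass between $\|\cdot\|_{X_{s,\delta}}$ and $\|\cdot\|_{X_{s,\delta,\mathcal{A}^0}}$, and absorbs all the constants (which depend on the coefficient norms, $s$, and $\delta$, as asserted) together with the factor $c_0$ from (\ref{eq:5.3}); this yields (\ref{eq:5.5}).

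I expect the main obstacle to be the bookkeeping in step (ii): keeping the integration-by-parts residuals, the $\psi_j$-derivative terms, and the dyadic scaling factors $2^{(\delta+\frac52)2j}$ all uniformly under control as $j\to\infty$, and in particular verifying that replacing $\mathbf{a}_{22}^a$ by $\mathbf{a}_{22}^a-\mathbf{c}_{22}^a$ plus the \emph{constant} matrix $\mathcal{C}^a$ (whose contribution integrates by parts with \emph{no} residual and no weight mismatch) is exactly what makes the spatial-derivative terms land in $H_{s,\delta+1}$ rather than merely $H_{s,\delta}$. This is precisely the structural point emphasised before Assumption~\ref{ass:1}, and it is where the argument of \cite{Ka1} is invoked; the remaining estimates are applications of Propositions~\ref{Algebra}, \ref{Embedding}, and \ref{Moser} in the manner indicated.
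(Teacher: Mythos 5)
Your overall architecture matches the paper's: differentiate the four summands of the inner--product, use equation (\ref{eq:5.1}) on the time derivatives, handle the $\partial_t\mathcal{A}^0$ terms with Assumption (\ref{eq:5.9}), integrate by parts on the symmetric first--order spatial terms at the level of each dyadic block, and control the remainders coming from $\psi_j$, the scaling, and the commutators. The easy terms ($\langle v,\partial_t v\rangle_{s,\delta}$, the $\mathcal{B},\mathcal{F},\mathcal{D}$ contributions, the infinite sums with $\partial_t\mathbf{a}_{33}^0$, $\partial_t\mathbf{a}_{44}^0$) are treated exactly as in the paper.

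However, there is a genuine gap at the decisive step, namely your item (iii). You propose to control the commutators $[\Lambda^s,\text{coefficient}]$ by ``Proposition~\ref{Moser} and the algebra property.'' Proposition~\ref{Moser} is a composition (Moser--type) estimate, not a commutator estimate, and the standard commutator route fails here: applying the Kato--Ponce estimate (\ref{eq:appen:9}) to $\Lambda^s$ acting on $\widetilde{A}^a\partial_a V$ produces a term $\|\widetilde{A}^a\|_{H^s}\,\|\partial_a V\|_{L^\infty}$, and since $V$ already contains $\partial_x v$, the factor $\|\partial_a\partial_x v\|_{L^\infty}$ is controlled by $\|\partial_x v\|_{H^s}$ only when $s-1>\frac{3}{2}$, i.e.\ $s>\frac{5}{2}$. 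This is precisely the obstruction the lemma is designed to overcome, and your write-up does not supply the mechanism. The paper's resolution (sketched via the terms $E_{\partial_x}(j,k)$, $E_{\partial_x}(a,j,k)$, $E_{\partial_x}(b,j,k)$ and spelled out heuristically in the Appendix) is to rewrite $\widetilde{A}^a\partial_a V=\partial_a(\widetilde{A}^a V)-(\partial_a\widetilde{A}^a)V$ and then apply the Kato--Ponce estimate to the \emph{order--$(s+1)$} pseudodifferential operator $P=\Lambda^s\partial_a$; this trades the fatal $\|\partial_a V\|_{L^\infty}$ factor for $\|\widetilde{A}^a\|_{H^{s+1}}$, which is finite exactly because the matrices depend only on $v\in H_{s+1,\delta}$ and not on $\partial v$. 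You correctly sense that the coefficient structure is ``precisely the structural point,'' but you invoke \cite{Ka1} without identifying how that structure enters the estimate; as written, your step does not close for $\frac{3}{2}<s\leq\frac{5}{2}$.
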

The corresponding energy estimates for the vacuum Einstein equations
were obtained in \cite{Ka1}. The same techniques can be applied here with some
obvious modifications.  We therefore give only a short sketch of the proof.
\vskip 5mm
\noindent
\textit{Sketch of the proof.}
From the inner--product (\ref{eq:5.6}) we see that
\begin{equation*}
\begin{split}
 \dfrac{1}{2}\dfrac{d}{dt} \left\langle
U,U\right\rangle_{X_{s,\delta,\mathcal{A}^0}} &= \langle
v,\partial_t v\rangle_{s,\delta}+  \langle
\partial_t v,\partial_t^2 v\rangle_{s,\delta+1}+\left\langle  \partial_x
v, \partial_x\partial_t v\right\rangle_{s,\delta+1,{\bf a}_{33}^0} \\ & +
\left\langle W,\partial_t W\right\rangle_{s+1,\delta+2,{\bf a}_{44}^0} \\  &+
\sum_{j=0}^\infty
2^{\left(\delta+1+\frac{3}{2}\right)2j}\left\langle
\Lambda^s\left(\psi_j^2 \partial_x v\right)_{2^j}, \partial_t\left( {\bf
a}_{33}^0\right)_{2^j}\Lambda^s\left(\psi_j^2 \partial_x
v\right)_{2^j}\right\rangle_{L^2} \\  &+
\sum_{j=0}^\infty
2^{\left(\delta+2+\frac{3}{2}\right)2j}\left\langle
\Lambda^{s+1}\left(\psi_j^2 W\right)_{2^j}, \partial_t\left( {\bf
a}_{44}^0\right)_{2^j}\Lambda^{s+1}\left(\psi_j^2
W\right)_{2^j}\right\rangle_{L^2}.
\end{split}
\end{equation*}
By the Cauchy Schwarz inequality, we obtain
\begin{equation*}
 |\langle v,\partial_t v\rangle_{s,\delta}|\leq \left\|
v\right\|_{H_{s,\delta,2}}\left\| \partial_t
v\right\|_{H_{s,\delta,2}}\leq\dfrac{1}{2}\left(\left\|
v\right\|_{H_{s,\delta,2}}^2+\left\|\partial_t
v\right\|_{H_{s,\delta+1,2}}^2\right),
\end{equation*}
and by Assumption (\ref{eq:5.9}), the first infinite sum is less than
\begin{equation*}
\begin{split}
 &C\left\|\partial_t {\bf a}_{33}^0\right\|_{L^\infty}
\sum_{j=0}^\infty
2^{\left(\delta+1+\frac{3}{2}\right)2j}\left\langle
\Lambda^s\left(\psi_j^2 \partial_x v\right)_{2^j},\Lambda^s\left(\psi_j^2
\partial_x
v\right)_{2^j}\right\rangle_{L^2}\\ =& C\left\|\partial_t {\bf
a}_{33}^0\right\|_{L^\infty}\left\|\partial_x v\right\|_{H_{s,\delta+1,2}}^2.
\end{split}
\end{equation*}
A similar estimate  holds for the second infinite sum. 
The most difficult part is the estimate 
\begin{equation}
\label{eq:5.12}
 \Big|  \langle
\partial_t v,\partial_t^2 v\rangle_{s,\delta+1}+\left\langle  \partial_x
v, \partial_x\partial_t v\right\rangle_{s,\delta+1,{\bf a}_{33}^0} \Big|\lesssim
\left(\left\| U\right\|_{X_{s,\delta}}^2 +1\right),
\end{equation}
and here it is essential to use the assumption that $\mathcal{A}^\alpha\in
H_{s+1,\delta}$ and $s>\frac{3}{2}$. 
We present here the main ideas of this estimate and for a detailed proof 
we refer to \cite[\S4]{Ka1}.

Let
\begin{equation}
\label{eq:5.10}
 E_{\partial_t}(j)=
\left\langle
           \Lambda^s\left(\psi_j^2 \partial_t v\right)_{2^j},
           \Lambda^s\left(\psi_j^2 (\partial_t^2 v)
           \right)_{2^j}\right\rangle_{L^2}
\end{equation} 
and 
\begin{equation}
\label{eq:5.14}
 E_{\partial_x}(j)=
\left\langle
           \Lambda^s\left(\psi_j^2 \partial_x v\right)_{2^j},
           \left({\bf a}_{33}^0\right)_{2^j}
           \Lambda^s\left(\psi_j^2 \partial_t(\partial_x v)
           \right)_{2^j}\right\rangle_{L^2}.
\end{equation} 
In order to use equation  (\ref{eq:5.1}) we need to commute $\left({\bf
a}_{33}^0\right)_{2^j}$ both with the operator $\Lambda^s$ and $\psi_j^2$. 
An essential ingredient is the Kato--Ponce commutator estimate 
(\ref{eq:appen:9}) (see Appendix). Usually
this estimate is used  in similar situations with the operator
$\Lambda^s$, 
we, however, apply  it to the pseudodifferential
operator $P=\Lambda^s\partial_x$.
This  enables  us to
use  estimate (\ref{eq:appen:9}) with the index $s+1$ and to exploit the
assumption that $\mathcal{A}^\alpha\in H^{s+1}_{\rm loc}$.
 
Let $\Psi_k=\left(\sum_{j=0}^\infty\psi_j\right)^{-1}\psi_k$, then ${\rm
supp}(\Psi_k\psi_j)\neq\emptyset$  for $k=j-2,\ldots,j+1$, and hence we have
that
\begin{equation}
\label{eq:5.112}
 E_{\partial_x}(j)=\sum_{k=j-2}^{j+1}\left\langle
           \Lambda^s\left(\psi_j^2 \partial_x v\right)_{2^j},
           \left({\bf a}_{33}^0\right)_{2^j}
           \Lambda^s\left(\Psi_k\psi_j^2 \partial_t(\partial_x v)
           \right)_{2^j}\right\rangle_{L^2}=:\sum_{k=j-2}^{j+1}
E_{\partial_x}(j,k),
\end{equation} 
and similarly
\begin{equation}
\label{eq:5.113}
 E_{\partial_t}(j)=\sum_{k=j-2}^{j+1}\left\langle
           \Lambda^s\left(\psi_j^2 \partial_t v\right)_{2^j},
           \Lambda^s\left(\Psi_k\psi_j^2 \partial_t(\partial_t v)
           \right)_{2^j}\right\rangle_{L^2}=:\sum_{k=j-2}^{j+1}
E_{\partial_t}(j,k).
\end{equation} 
Writing 
\begin{equation}
 \left(\Psi_k\psi_j^2 \partial_t(\partial_x
v)\right)_{2^j}=2^{-j}\partial_x\left(\Psi_k\psi_j^2\partial_t v\right)_{2^j}-
\partial_x\left(\Psi_k\psi_j^2\right)_{2^{j}}
\left(\partial_t v\right)_{2j}, 
\end{equation} 
and
\begin{equation}
\label{eq:5.13}
\begin{split}
 \Lambda^s\left(\partial_x\left(\Psi_k\psi_j^2\partial_t v\right)_{2^j}\right)
& =
\left(\Lambda^s\partial_x\right)\left(\Psi_k\psi_j^2\partial_t v\right)_{2^j}
-\left(\Psi_k\right)_{2^j}
\left(\Lambda^s\partial_x\right)\left(\left(\psi_j^2\partial_t
v\right)_{2^j}\right)\\ & +
\left(\Psi_k\right)_{2^j}
\left(\Lambda^s\partial_x\right)\left(\left(\psi_j^2\partial_t
v\right)_{2^j}\right),
\end{split}
\end{equation} 
we obtain that
\begin{equation}
\begin{split}
 E_{\partial_x}(j,k) & =2^{-j}\left\langle \Lambda^s\left(\psi_j^2\partial_x v
\right), \left(\Psi_k{\bf
a}_{33}^0\right)_{2^j}
\left(\Lambda^s\partial_x\right)\left(\psi_j^2\partial_t
v\right)_{2^j}\right\rangle +R(a,j,k)\\ &=: E_{\partial_x}(a,j,k)+ R(a,j,k). 
\end{split}
\end{equation}
We estimate the remainder term $R(a,j,k)$ by using the 
Cauchy--Schwarz
inequality, the  Sobolev embedding theorem and of course the Kato--Ponce commutator
estimate. This results in
\begin{equation}
\label{eq:5.17}
 |R(a,j,k)|\lesssim \left\|\left({\bf a}_{33}^0\right)_{2^j}\right\|_{L^\infty}
\left\|\left(\psi_j^2\partial_x v\right)_{2^j}\right\|_{H^s}\left(
\left\|\left(\psi_j\partial_t v\right)_{2^j}\right\|_{H^s}+
\left\|\left(\psi_j^2\partial_t v\right)_{2^j}\right\|_{H^s}\right).
\end{equation}

Next, we commute $\left(\Psi_k{\bf a}_{33}^0\right)_{2^j}$ with
$\Lambda^s\partial_x$, that
is, we write, 
\begin{equation}
 \begin{split}
&\left(\Psi_k{\bf a}_{33}^0\right)_{2^j}
\left(\Lambda^s\partial_x\right)\left(\psi_j^2\partial_t v\right)_{2^j} =
\left(\Psi_k{\bf a}_{33}^0\right)_{2^j}
\left(\Lambda^s\partial_x\right)\left(\psi_j^2\partial_t v\right)_{2^j}-
\left(\Lambda^s\partial_x\right)\left(\left(\Psi_k{\bf a}_{33}^0\right)_{2^j}
\left(\psi_j^2\partial_t v\right)_{2^j}\right)\\ &+
\Lambda^s\left(\left(\partial_x\left(\Psi_k{\bf
a}_{33}^0\psi_j^2\right)\right)_{2^j}\left(\partial_t v\right)_{2^j} \right)+2^j
\Lambda^s\left(\left(\Psi_k{\bf
a}_{33}^0\psi_j^2\right)_{2^j}\left(\partial_t\partial_x v\right)_{2^j}
\right),
 \end{split}
\end{equation} 
then
\begin{equation}
\begin{split}
 E_{\partial_x}(a,j,k) &=
\left\langle \Lambda^s\left(\psi_j^2\partial_x v \right),
\Lambda^s\left(\Psi_k{\bf a}_{33}^0\psi_j^2\partial_t
\partial_x v\right)_{2^j}\right\rangle_{L^2} + R(b,j,k)\\ & :=
E_{\partial_x}(b,j,k)+R(b,j,k).
\end{split}
\end{equation} 

Since ${\bf a}_{33}^0\in H^{s+1}_{\rm loc}$,  the Kato--Ponce commutator
estimate (\ref{eq:appen:9}) implies that
\begin{equation}
 \begin{split}
  &\left\|\left(\Psi_k{\bf a}_{33}^0\right)_{2^j}
\left(\Lambda^s\partial_x\right)\left(\psi_j^2\partial_t v\right)_{2^j}-
\left(\Lambda^s\partial_x\right)\left(\left(\Psi_k{\bf a}_{33}^0\right)_{2^j}
\left(\psi_j^2\partial_t v\right)_{2^j}\right)\right\|_{L^2}\\ \lesssim & 
\left\|\nabla\left(\Psi_k{\bf a}_{33}^0\right)_{2^j}\right\|_{L^\infty}
\left\|\left(\psi^2_j\partial_t v\right)_{2^j}\right\|_{H^{s}}+
\left\|\left(\Psi_k{\bf a}_{33}^0\right)_{2^j}\right\|_{H^{s+1}}
\left\|\left(\psi^2_j\partial_t v\right)_{2^j}\right\|_{L^\infty}.
 \end{split}
\end{equation}

Taking into account that $\left\|\left(\Psi_k{\bf a}_{33}^0\right)_{2^j}\right\|_{H^{s+1}}\simeq
\left\|\left(\psi_k{\bf a}_{33}^0\right)_{2^k}\right\|_{H^{s+1}}$, we
obtain in a similar manner, as in as in the estimate of the previous
remainder term, the following
\begin{equation}
\label{eq:5.21}
\begin{split}
 |R(b,j,k)| &\lesssim\left(\left\|\left(\psi_k\left({\bf a}_{33}^0-{\bf
e}_{33}\right)\right)_{2^k}\right\|_{H^{s+1}}+1\right)
\left\|\left(\psi_j^2\partial_x v\right)_{2^j}\right\|_{H^{s}}\\ &\times
\left(\left\|\left(\psi_j\partial_t v\right)_{2^j}\right\|_{H^{s}}+
\left\|\left(\psi^2_j\partial_t v\right)_{2^j}\right\|_{H^{s}}\right).
\end{split}
\end{equation}

Adding $E_{\partial_t}(j,k)$ to $E_{\partial_x}(b,j,k) $ enables us now 
 to use equation (\ref{eq:5.1}), that is,
\begin{equation*}
 \begin{split}
  E_{\partial_t}(j,k)+E_{\partial_x}(b,j,k) &=
\left\langle \Lambda^s\left(\psi_j^2\begin{pmatrix}\partial_t v\\ \partial_x v
\end{pmatrix}\right)_{2^j},\Lambda^s\left(\Psi_k\psi_j^2
\begin{pmatrix}{\bf e}_{10} & {\bf 0}_{10\times 30}\\ {\bf 0}_{30\times 10} &
{\bf a}_{33}^0 \end{pmatrix}\partial_t\begin{pmatrix}\partial_t v \\ \partial_x
v\end{pmatrix}\right)_{2^j}\right\rangle_{L^2}\\ &=
\sum_{a=1}^3
\left\langle \Lambda^s\left(\psi_j^2\begin{pmatrix}\partial_t v\\ \partial_x v
\end{pmatrix}\right)_{2^j},\Lambda^s\left(\Psi_k\psi_j^2
\left({\bf a}_{22}^a+{\bf c}_{22}^a\right)\partial_a\begin{pmatrix}\partial_t v
\\ \partial_x
v\end{pmatrix}\right)_{2^j}\right\rangle_{L^2}\\  &+
\left\langle \Lambda^s\left(\psi_j^2\begin{pmatrix}\partial_t v\\ \partial_x v
\end{pmatrix}\right)_{2^j},\Lambda^s\left(\Psi_k\psi_j^2{\bf B}
\right)_{2^j}\right\rangle_{L^2},
 \end{split}
\end{equation*} 
where the matrices ${\bf a}_{22}^a$ and ${\bf c}_{22}^a$ are defined
in subsection \ref{sec:evolution-equations} and ${\bf c}_{22}^a$ are
constant matrices.
In the last expression ${\bf B}$ contains the zero and first order
derivatives of $v$.
It is straightforward to estimate this term since it does not contain
second order derivatives.

In order to estimate the second order terms, we write
\begin{equation}
\label{eq:5.23}
\begin{split}
 &\left(\Psi_k\psi_j^2
\left({\bf a}_{22}^a+{\bf c}_{22}^a\right)\partial_a\begin{pmatrix}\partial_t v
\\ \partial_x v\end{pmatrix}\right)_{2^j}=2^{-j}\partial_a
\left(\Psi_k
\left({\bf a}_{22}^a+{\bf c}_{22}^a\right)\psi_j^2\begin{pmatrix}\partial_t v
\\ \partial_x v\end{pmatrix}\right)_{2^j} \\ -
& \left(\partial_a \left(\Psi_k\psi_j^2
\left({\bf a}_{22}^a+{\bf c}_{22}^a\right)\right)\begin{pmatrix}\partial_t v
\\ \partial_x
v\end{pmatrix}\right)_{2^j}
\end{split}
\end{equation} 
{and then we commute $\Lambda^s\partial_a$ with $\Psi_k
\left({\bf a}_{22}^a+{\bf c}_{22}^a\right)$ in the first term of the right hand
side of (\ref{eq:5.23}).  This}  results in 
\begin{equation}
\label{eq:5.24}
 \left\langle \Lambda^s\left(\psi_j^2\begin{pmatrix}\partial_t v\\ \partial_x v
\end{pmatrix}\right)_{2^j},\partial_a\left(\Psi_k
\left({\bf a}_{22}^a+{\bf c}_{22}^a\right)\right)_{2^j}
\Lambda^s\left(\psi_j^2\begin{pmatrix}\partial_t
v \\ \partial_x v\end{pmatrix}\right)_{2^j}\right\rangle_{L^2},
\end{equation} 
plus a remainder term
 which can be estimate by the Kato--Ponce commutator estimate 
(\ref{eq:appen:9}) in a similar manner to the previous steps.
{We use the common method of integration by parts in order to estimate
the inner--product (\ref{eq:5.24})}, and as to the second term of
(\ref{eq:5.23}),
we have that 
\begin{equation*}
 \left(\partial_a \left(\Psi_k\psi_j^2
\left({\bf a}_{22}^a+{\bf c}_{22}^a\right)\right)\begin{pmatrix}\partial_t v
\\ \partial_x
v\end{pmatrix}\right)=\left(\partial_a\left(\Psi_k\left({\bf
a}_{22}^a+{\bf c}_{22}^a\right)\right)\psi_j+2\Psi_k\left({\bf
a}_{22}^a+{\bf
c}_{22}^a\right)\partial_a\psi_i\right)\psi_j\begin{pmatrix}\partial_t
v\\ \partial_x v\end{pmatrix}.
\end{equation*}
So by the Cauchy--Schwraz inequality we obtain that
\begin{equation}
\label{eq:5.25}
\begin{split}
&\Big| \left\langle \Lambda^s\left(\psi_j^2\begin{pmatrix}\partial_t v\\
\partial_x v
\end{pmatrix}\right)_{2^j},\Lambda^s \left(\partial_a\left(\Psi_k\left({
\bf a}_{22}^a\right)\right)\psi_j^2\begin{pmatrix}\partial_t
v \\ \partial_x v\end{pmatrix}\right)_{2^j}\right\rangle_{L^2}\Big| \\ \lesssim
&
\left(\left\|\left(\psi_j^2 \partial_t v\right)_{2^j}\right\|_{H^s}+
\left\|\left(\psi_j^2 \partial_x v\right)_{2^j}\right\|_{H^s}\right)\left\|
\left(\partial_a\left(\Psi_k\left({
\bf a}_{22}^a\right)\right)\psi_j^2\begin{pmatrix}\partial_t
v \\ \partial_x v\end{pmatrix}\right)_{2^j}\right\|_{H^s},
\end{split}
\end{equation}
and since $\mathcal{A}^a\in H_{s+1,\delta}, s>\frac{3}{2}$, we have by
the multiplicity property of the $H^s$ spaces that
\begin{equation}
\label{eq:5.26}
\begin{split}
& \left\|
\left(\partial_a\left(\Psi_k\left({
\bf a}_{22}^a\right)\right)\psi_j^2\begin{pmatrix}\partial_t
v \\ \partial_x v\end{pmatrix}\right)_{2^j}\right\|_{H^s}\\ \lesssim &
\left\|\left(\psi_k{
\bf a}_{22}^a\right)_{2^j}\right\|_{H^{s+1}}
\left(\left\|\left(\psi_j^2 \partial_t v\right)_{2^j}\right\|_{H^s}+
\left\|\left(\psi_j^2 \partial_x v\right)_{2^j}\right\|_{H^s}\right).
\end{split}
\end{equation}

The other term can be estimated by similar arguments.
From inequalities (\ref{eq:5.17}), (\ref{eq:5.21}), (\ref{eq:5.25}) and
(\ref{eq:5.26}), we conclude that
\begin{equation}
\begin{split}
 &\sum_{j=0}^\infty\sum_{k=j-2}^{j+1}
2^{(\delta+1+\frac{3}{2})2j}\Big|E_{\partial_t}(j,k)+
E_{\partial_x}(j,k)\Big|\\ \lesssim & \sum_{j=0}^\infty\sum_{k=j-2}^{j+1}
2^{(\delta+1+\frac{3}{2})2j}
\left\|\left(\psi_k {\bf a}_{22}^a\right)_{2^k}\right\|_{H^{s+1}}
\left\|\left(\psi_j^2 \partial_t v\right)_{2^j}\right\|_{H^{s}}
\left\|\left(\psi_j \partial_x v\right)_{2^j}\right\|_{H^{s}}+\cdots,
\end{split}
\end{equation}  
where $\cdots$ represent terms that are sums of similar structure.
  For example $\psi_j$ is replaced by $\psi_j^2$, ${\bf a}_{22}^a$ is
  replaced by $({\bf a}_{33}^0-{\bf e}_{33})$, or the $H^{s+1}$--norm
  is replaced by the $L^\infty$--norm.

Applying the H\"older inequality and using the equivalence of norms
(Propositions \ref{equivalence:1}), we have that 
\begin{equation}
\begin{split}
 &\sum_{j=0}^\infty\sum_{k=j-2}^{j+1}
2^{(\delta+1+\frac{3}{2})2j}
\left\|\left(\psi_k {\bf a}_{22}^a\right)_{2^k}\right\|_{H^{s+1}}
\left\|\left(\psi_j^2 \partial_t v\right)_{2^j}\right\|_{H^{s}}
\left\|\left(\psi_j \partial_x v\right)_{2^j}\right\|_{H^{s}}\\ 
\lesssim  & \left\|{\bf a}_{22}^a\right\|_{H_{s+1,\delta}}\left\|\partial_t
v\right\|_{H_{s+1,\delta}} 
\left\|\partial_x v\right\|_{H_{s+1,\delta}}\leq
\left\|{\bf a}_{22}^a\right\|_{H_{s+1,\delta}}\left(\left\|\partial_t
v\right\|_{H_{s+1,\delta}}^2 +
\left\|\partial_x v\right\|_{H_{s+1,\delta}}^2\right).
\end{split}
\end{equation}

So recalling the properties of  the inner--products 
(\ref{eq:5.2}), (\ref{eq:5.8}), (\ref{eq:5.6}), (\ref{eq:5.10}),
(\ref{eq:5.14}), (\ref{eq:5.112}) and (\ref{eq:5.113}), we obtain
(\ref{eq:5.12}).
 The estimate of $\left\langle W,\partial_t W\right\rangle_{s+1,\delta+2,{\bf
a}_{44}^0}$ relies on similar ideas to those of (\ref{eq:5.12}), but it is
simpler, since  $W\in H^{s+1}_{\rm loc}$.
Having collected the estimates  of all the terms, we have 
\begin{equation*}
  \dfrac{1}{2}\dfrac{d}{dt} \left\langle
U,U\right\rangle_{X_{s,\delta,\mathcal{A}^0}}\lesssim \left(\left\| U
\right\|_{X_{s,\delta}}^2 + 1\right)
\end{equation*}
and by the equivalence (\ref{eq:5.4}) we obtain (\ref{eq:5.5}).
\hfill{$\square$}

\begin{rem} 
Note that if the coefficients of $\partial_t v$ in the matrix $\mathcal{A}^0$
were dependent on $t$, then  we would have to reiterate the commutation
(\ref{eq:5.13}), but with the operator $\Lambda^s\partial_t$ instead of   
$\Lambda^s\partial_x$. However,  $\Lambda^s\partial_t$ is pseudodifferential 
operator of oder $s$, and hence we would not get the desired regularity. This
is the reason for dividing equations (\ref{eq:2.3}) by $-g^{00}$. 
\end{rem}

\subsection{$L_\delta^2$--energy estimates}
\label{sec:l_delt-energy-estim}

The next section deals with the existence of solutions to the
nonlinear symmetric hyperbolic systems by means of an iteration
scheme.
The $X_{s,\delta}$--energy estimates are used to obtain boundedness of
the sequence, while $L_\delta^2$--energy estimates are needed in order
to establish the contraction.

Let 
\begin{equation*}
 \langle
u_1,u_2\rangle_{\delta}=\int_{\setR^3}\left(1+|x|\right)^{2\delta}
u_1^T(x)u_2(x)dx,
\end{equation*} 
denote a weighted $L^2$ inner--product,  where $u_1^Tu_2$ denote the scalar
product between two vectors.
The $L_\delta^2(\setR^3)$--space is the closure of all continuous functions
under the norm $\|u\|_{L_\delta^2}^2=\langle u,u\rangle_\delta$, and this norm
 is equivalent to the norm $\|u\|_{H_{0,\delta}}$ (see
\cite{triebel76:_spaces_kudrj2}). Similar to (\ref{eq:5.7}), we set
\begin{equation*}
 Y_\delta = L_\delta^2\times L_{\delta+1}^2\times L_{\delta+1}^2\times
L_{\delta+1}^2
\end{equation*}
and $\|U\|_{Y_\delta}^2= \left\| v\right\|_{L_\delta^2}^2+ \left\|
\partial_t v\right\|_{L_{\delta+1}^2}^2+\left\|
\partial_x v\right\|_{L_{\delta+1}^2}^2+\left\|
W\right\|_{L_{\delta+1}^2}^2$.
We also define the  inner--product of $Y_\delta$ in
accordance with to the system (\ref{eq:5.1}):
\begin{equation*}
\begin{split}
& \left\langle U_1,U_2\right\rangle_{Y_\delta,\mathcal{A}^0}=
\left\langle v_1,v_2\right\rangle_\delta+\left\langle
\partial_t v_1,\partial_t v_2\right\rangle_{\delta+1}
+\left\langle\partial_x v_1,{\bf a}_{33}^0\partial_x v_2\right\rangle_{\delta+1}
+\left\langle W_1,{\bf a}_{44}^0 W_2\right\rangle_{\delta+1},
\end{split}
\end{equation*} 
and the associated norm $\left\|
U\right\|_{Y_\delta,\mathcal{A}^0}^2=\langle
U,U\rangle_{Y_\delta,\mathcal{A}^0}$. By assumption (\ref{eq:5.3}), $\left\|
U\right\|_{Y_\delta,\mathcal{A}^0}\simeq \left\|U\right\|_{Y_\delta}$.

\begin{lem}
  \label{lem:2}
  Assume the coefficients of (\ref{eq:5.1}) satisfy Assumptions
  \ref{ass:1}. If $U(t)\in X_{1,\delta}$ is a solution of (\ref{eq:5.1}),
  then
  \begin{equation}
\label{eq:5.29}
    \dfrac{d}{dt} \left\langle
U(t),U(t)\right\rangle_{Y_\delta,\mathcal{A}^0}\leq
    Cc_0 \left(\left\langle
        U(t),U(t)\right\rangle_{Y_\delta,\mathcal{A}^0}+\left\|\mathcal{F}
      \right\|_{L_{\delta+1}^2}^2+\left\|\mathcal{D}
      \right\|_{L_{\delta+1}^2}^2\right) ,
  \end{equation} 
  where the constant $C$ depends upon the $L^\infty$--norms of $
  \mathcal{A}^\alpha$, $\partial_\alpha \mathcal{A}^\alpha$ and $\mathcal{B}$.
\end{lem}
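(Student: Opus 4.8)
The plan is to run the standard weighted $L^2$ energy argument for symmetric hyperbolic systems, adapted to the block structure of $\mathcal{A}^0$ and to the weights built into $Y_\delta$. First I would differentiate $t\mapsto\langle U(t),U(t)\rangle_{Y_\delta,\mathcal{A}^0}$ and split the result according to the four diagonal blocks of $\mathcal{A}^0$: the identity blocks attached to $v$ (weight $\delta$) and to $\partial_t v$ (weight $\delta+1$), the block ${\bf a}_{33}^0$ attached to $\partial_x v$, and the block ${\bf a}_{44}^0$ attached to $W$. Differentiating the last two produces, besides the principal terms $\langle\partial_x v,{\bf a}_{33}^0\partial_t\partial_x v\rangle_{\delta+1}$ and $\langle W,{\bf a}_{44}^0\partial_t W\rangle_{\delta+1}$, two terms in which $\partial_t$ falls on ${\bf a}_{33}^0$ or ${\bf a}_{44}^0$; these are bounded immediately by $\|\partial_t\mathcal{A}^0\|_{L^\infty}\|U\|_{Y_\delta}^2$ using Assumption (\ref{eq:5.9}). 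The term $\langle v,\partial_t v\rangle_\delta$ is disposed of by Cauchy--Schwarz together with the elementary inclusion $\|\cdot\|_{L^2_\delta}\le\|\cdot\|_{L^2_{\delta+1}}$ (since $(1+|x|)^{2\delta}\le(1+|x|)^{2(\delta+1)}$) and Young's inequality, so it is controlled by $\|U\|_{Y_\delta}^2$.

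For the metric--derivative block I would set $Z=(\partial_t v,\partial_x v)$ and $\tilde{\mathcal{A}}^0=\mathrm{diag}({\bf e}_{10},{\bf a}_{33}^0)$, so that the sum $\langle\partial_t v,\partial_t^2 v\rangle_{\delta+1}+\langle\partial_x v,{\bf a}_{33}^0\partial_t\partial_x v\rangle_{\delta+1}$ equals $\langle Z,\tilde{\mathcal{A}}^0\partial_t Z\rangle_{\delta+1}$. Using equation (\ref{eq:5.1}) restricted to this block I would replace $\tilde{\mathcal{A}}^0\partial_t Z$ by $\sum_a({\bf a}_{22}^a+{\bf c}_{22}^a)\partial_a Z$ plus the lower--order contributions, namely $\mathcal{B}$ acting on $(\partial_t v,\partial_x v)$ (which is all that $\mathcal{B}$ sees, since its first block column vanishes) together with the $Z$--components of $\mathcal{F}=(0,f,0,0)$ and of $\mathcal{D}$; by Cauchy--Schwarz these give $\|Z\|_{L^2_{\delta+1}}\bigl(\|\mathcal{B}\|_{L^\infty}\|Z\|_{L^2_{\delta+1}}+\|\mathcal{F}\|_{L^2_{\delta+1}}+\|\mathcal{D}\|_{L^2_{\delta+1}}\bigr)$. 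The principal term $\sum_a\langle Z,({\bf a}_{22}^a+{\bf c}_{22}^a)\partial_a Z\rangle_{\delta+1}$ I would handle by integration by parts in $x^a$: since $({\bf a}_{22}^a+{\bf c}_{22}^a)$ is symmetric, the derivative is forced onto the weight $w=(1+|x|)^{2(\delta+1)}$ and onto the coefficient matrix, and because $|\partial_a w|\le 2|\delta+1|w$ and $\partial_a{\bf c}_{22}^a=0$, the result is bounded by $C\bigl(\|{\bf a}_{22}^a\|_{L^\infty}+\|{\bf c}_{22}^a\|_{L^\infty}+\|\partial_a{\bf a}_{22}^a\|_{L^\infty}\bigr)\|Z\|_{L^2_{\delta+1}}^2$. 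The fluid block $\langle W,{\bf a}_{44}^0\partial_t W\rangle_{\delta+1}$ is treated identically, using the restriction of (\ref{eq:5.1}) to the $W$--block, whose spatial matrices are the ${\bf a}_{33}^a$ from (\ref{eq:Initial:2}) with no constant--matrix correction and whose lower--order terms are the blocks ${\bf b}_{4j}$ acting on $(\partial_t v,\partial_x v)$ together with $d_4$.

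Collecting the estimates, applying Young's inequality to the cross terms involving $\mathcal{F}$ and $\mathcal{D}$, and converting $\|U\|_{Y_\delta}^2$ into $\langle U,U\rangle_{Y_\delta,\mathcal{A}^0}$ via the equivalence (\ref{eq:5.3}) — which is precisely where the factor $c_0$ enters — yields (\ref{eq:5.29}). The integrations by parts are justified first for $U(t)\in C_0^\infty(\setR^3)$ and then for $U(t)\in X_{1,\delta}$ by the density statement of Proposition \ref{Density}, all quantities appearing being continuous in the $X_{1,\delta}$ topology. This argument is much lighter than the proof of Lemma \ref{lem:1}: there is no Kato--Ponce commutator, no fractional--order subtlety, and the restriction $\delta\ge-\tfrac32$ is not needed, because in a first--order $L^2$ estimate the weight derivative costs only the bounded factor $|\nabla w|/w$. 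The single point that genuinely requires care is bookkeeping rather than analysis: charging the source $\mathcal{F}=(0,f,0,0)$ to the $Z$--block only, keeping track that the constant matrix $\mathcal{C}^a$ drops out of the derivative--of--coefficient terms while still contributing its fixed finite $L^\infty$ norm to the weight--derivative term, and resolving the mismatch between the weight $\delta$ on the $v$--component and the weight $\delta+1$ on the remaining components the easy way.
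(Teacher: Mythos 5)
Your proposal is correct and follows essentially the same route as the paper's proof: differentiate the $Y_\delta$-inner product blockwise, absorb the $\partial_t{\bf a}_{33}^0$ and $\partial_t{\bf a}_{44}^0$ terms via Assumption (\ref{eq:5.9}), substitute the semi-decoupled equation into the $(\partial_t v,\partial_x v)$ and $W$ blocks, integrate by parts using the symmetry of ${\bf a}_{22}^a+{\bf c}_{22}^a$ (picking up the weight derivative and $\partial_a{\bf a}_{22}^a$), and pass to the $\mathcal{A}^0$-norm via (\ref{eq:5.3}). Your added remarks on the density justification of the integration by parts and on why no Kato--Ponce commutator is needed here are accurate but do not change the argument.
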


\begin{proof}
 We find that
\begin{equation*}
 \begin{split}
  \frac{1}{2}\frac{d}{dt} &\left\langle
U(t),U(t)\right\rangle_{Y_\delta,\mathcal{A}^0} =\left\langle
v,\partial_t v\right\rangle_\delta+\left\langle
\partial_t v,\partial_t^2 v\right\rangle_{\delta+1}
+\left\langle\partial_x v,{\bf a}_{33}^0 \partial_t \partial_x
v\right\rangle_{\delta+1} \\ & +
\left\langle W,{\bf a}_{44}^0 \partial_t W\right\rangle_{\delta+1}
+\left\langle\partial_x v,\partial_t({\bf a}_{33}^0)  \partial_x
v\right\rangle_{\delta+1}
+\left\langle W,\partial_t({\bf a}_{44}^0)  W\right\rangle_{\delta+1}.
 \end{split}
\end{equation*}  
By the Cauchy--Schwarz inequality
$|\left\langle v,\partial_t
v\right\rangle_\delta|\leq(\left\|v\right\|_{L_{\delta}^2}^2+
\left\|\partial_t v\right\|_{L_{\delta}^2}^2)$, 
$|\left\langle\partial_x v,\partial_t({\bf a}_{33}^0)  \partial_x
v\right\rangle_{\delta+1}|\lesssim \|\partial_t({\bf a}_{33}^0)\|_{L^\infty}
\|\partial_x v\|_{L_{\delta+1}^2}^2$  and 
$|\left\langle W,\partial_t({\bf a}_{44}^0) 
W\right\rangle_{\delta+1}|\lesssim \|\partial_t({\bf a}_{44}^0)\|_{L^\infty}
\|W\|_{L_{\delta+1}^2}^2$. 
Since the system (\ref{eq:5.1}) is semi-decoupled, we may estimate the 
expressions with $\partial_t v$ and $ \partial_x v$ first, and
later the term $W$. Using
equation (\ref{eq:5.1})
and recalling the structure  matrices (\ref{eq:3.7}), (\ref{eq:3.9}) and
(\ref{eq:3.6}),
we have
\begin{equation*}
 \begin{split}
 &\left\langle\partial_t v,\partial_t^2 v\right\rangle_{\delta+1}
+\left\langle\partial_x v,{\bf a}_{33}^0 \partial_t \partial_x
v\right\rangle_{\delta+1} =\sum_{a=1}^3\left\langle\begin{pmatrix}\partial_t
v\\ \partial_x v\end{pmatrix}, \left({\bf a}_{22}^a+{\bf c}_{22}^a\right)
\partial_a\begin{pmatrix}\partial_t v\\ \partial_x
v\end{pmatrix}\right\rangle_{\delta+1}\\ &+\left\langle\partial_t v, {\bf
b}_{22}\partial_t v+\left( {\bf b}_{23}+{\bf b}_{24}+{\bf
b}_{25}\right)\partial_x v\right\rangle_{\delta+1}+\left\langle\partial_t v,
f+d_2\right\rangle_{\delta+1}+\left\langle\partial_x v,
d_3\right\rangle_{\delta+1}.
 \end{split}
\end{equation*}

Exploring the symmetry of the matrices and using integration by parts, we have
that 
\begin{equation*}
\begin{split}
 &2\left\langle\begin{pmatrix}\partial_t
v\\ \partial_x v\end{pmatrix}, \left({\bf a}_{22}^a+{\bf c}_{22}^a\right)
\partial_a\begin{pmatrix}\partial_t v\\ \partial_x
v\end{pmatrix}\right\rangle_{\delta+1}=-\left\langle\begin{pmatrix}\partial_t
v\\ \partial_x v\end{pmatrix},\partial_a \left({\bf a}_{22}^a\right)
\begin{pmatrix}\partial_t v\\ \partial_x
v\end{pmatrix}\right\rangle_{\delta+1}\\&-2(\delta+1) 
\left\langle\begin{pmatrix}\partial_t
v\\ \partial_x v\end{pmatrix},\frac{x^a}{|x|} \left({\bf a}_{22}^a+{\bf
c}_{22}^a\right)
\begin{pmatrix}\partial_t v\\ \partial_x
v\end{pmatrix}\right\rangle_{\delta+1}.
\end{split}
\end{equation*} 
Thus
\begin{equation*}
\begin{split}
 &\Big|\left\langle\partial_t v,\partial_t^2 v\right\rangle_{\delta+1}
+\left\langle\partial_x v,{\bf a}_{33}^0 \partial_t \partial_x
v\right\rangle_{\delta+1}\Big|\\ \lesssim &
\left(\sum_{a=1}^3\left(\|\partial_a({\bf a}_{22}^a)\|_{L^\infty}+ \|{\bf
a}_{22}^a\|_{L^\infty}\right)
+\|\mathcal{B}\|_{L^\infty}\right)\left(\|\partial_t
v\|_{L_{\delta+1}^2}^2+\|\partial_x v\|_{L_{\delta+1}^2}^2\right) 
\\ + &\|\partial_t v\|_{L_{\delta+1}^2}^2 +\|\partial_x
v\|_{L_{\delta+1}^2}^2
+\|f\|_{L_{\delta+1}^2}^2+\|d_2\|_{L_{\delta+1}^2}^2+\|d_3\|_{L_{\delta+1}^2}^2.
\end{split}
\end{equation*}

And for $W$ we have,
\begin{equation*}
 \left\langle W,{\bf a}_{44}^0 \partial_t W\right\rangle_{\delta+1}=
\sum_{a=1}^3
\left\langle W,{\bf a}_{33}^a \partial_a W\right\rangle_{\delta+1}
+\left\langle W,d_4\right\rangle_{\delta+1},
\end{equation*} 
so similar arguments give that 
\begin{equation*}
 \Big|\left\langle W,{\bf a}_{44}^0 \partial_t W\right\rangle_{\delta+1}\Big|
\lesssim \left(\sum_{a=1}^3\left(\|\partial_a({\bf a}_{33}^a)\|_{L^\infty}+
\|{\bf
a}_{33}^a\|_{L^\infty}\right)+1\right)\|W\|_{L_{\delta+1}^2}^2+\|d_4\|_{L_ {
\delta+1 }^2} ^2.
\end{equation*} 
Finally,  using the equivalence of the norms we obtain (\ref{eq:5.29}).
\end{proof}

\section{The iteration process}
\label{sec:iteration}
In this section we adopt Majda's iterative
scheme\cite{majda84:_compr_fluid_flow_system_conser} in order to prove
the well-posdeness of the coupled hyperbolic system (\ref{eq:3.4}) in
the $H_{s,\delta}$-spaces.
A similar approach was carried out in \cite{Ka1} for the vacuum
Einstein equations, but in the presence of a perfect fluid there are
additional difficulties.
The zero order term $B$ is not necessarily a
$C^\infty$--function since it contains the fractional power of the
Makino variable $w^{2/(\gamma-1)}$.
Hence we could not apply the standard iteration scheme in order to
prove the existence theorem for symmetric
hyperbolic systems.
We denote the initial data by $U_0=(\phi,\varphi,\partial_x\phi,W_0)$,
where $W_0=(w_0, (u_0)^\alpha-e_0^\alpha)$ represents the initial data
for the Makino variable $w$ and the four velocity vector
$u^\alpha-e_0^\alpha$.
\begin{thm}
  \label{thm:1}
  Let $\frac{3}{2}<s<\frac{2}{\gamma-1}+\frac{1}{2}$,
  $-\frac{3}{2}\leq\delta$.
  Assume $U_0\in X_{s,\delta}$, $w_0\geq0$ and that there exits a
  positive constant $\mu$ such that
  \begin{equation}
    \label{eq:6.5}
    \dfrac{1}{\mu}V^TV\leq V^T\mathcal{A}^0(\phi,W_0)V\leq \mu V^TV\ \ \text{for
      all}\
    V\in\setR^{55}.
  \end{equation}
  Then there exists a positive constant $T$ and a unique solution
  $U(t)=(v(t),\partial_t v(t),\partial_x v(t), W(t))$ to the system
  (\ref{eq:3.4}) such that $U(0,x)=U_0(x)$,
  \begin{equation}
    \label{eq:6.4}
    U\in C\left([0,T], X_{s,\delta}\right)\ \ \ \text{and}\ \ \ W\in
    C^1\left([0,T], H_{s,\delta+2}\right).
  \end{equation} 
  The size of the time interval depends only on the norms of the initial data.
\end{thm}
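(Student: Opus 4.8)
\textit{Proof plan.}
The proof runs Majda's iteration scheme, the one genuinely new point being how to deal with the non-smooth source $w^{2/(\gamma-1)}$. The obstruction is that even when $w\in C_0^\infty$ and $w\geq 0$, the function $w^{2/(\gamma-1)}$ need not be $C^\infty$, so a linear iterate carrying this source is not $C_0^\infty$, and the energy estimate of Lemma \ref{lem:1} --- which is stated for $C_0^\infty$ iterates --- cannot be applied to it. I would remove this obstruction by adjoining $\epsilon$ as an extra unknown. Using the first fluid equation in \eqref{eq:Initial:2} together with $P^\nu{}_\beta\nabla_\nu u^\beta=\nabla_\nu u^\nu$, one sees that on solutions $w$ obeys the linear transport law $u^\nu\nabla_\nu w=-\tfrac{\gamma-1}{2}(1+Kw^2)(\nabla_\nu u^\nu)\,w$, hence $\epsilon:=w^{2/(\gamma-1)}$ obeys
\begin{equation}
\label{eq:plan-transport}
u^\nu\nabla_\nu\epsilon+(1+Kw^2)(\nabla_\nu u^\nu)\,\epsilon=0 ,
\end{equation}
whose coefficients are $C^\infty$ functions of $(g_{\alpha\beta},w,u^\alpha)$. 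I would append \eqref{eq:plan-transport} to the system \eqref{eq:3.4} and replace $w^{2/(\gamma-1)}$ in $\mathcal{F}$ by the new variable $\epsilon$ (so that $\mathcal{F}$ becomes linear in $\epsilon$ with $C^\infty$ coefficients), working in the enlarged space $\widehat X_{s,\delta}:=X_{s,\delta}\times H_{s,\delta+1}$. The enlarged system has the same structural form as \eqref{eq:3.4} with one extra scalar slot; crucially $\widehat{\mathcal{A}}^0$ and $\widehat{\mathcal{A}}^a$ still depend only on $(v,W)$, not on $\epsilon$ and not on derivatives, so the proof of Lemma \ref{lem:1} applies with only cosmetic changes (the extra slot being handled exactly as the $W$--block, one derivative lower). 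The initial value for the new slot is $\epsilon_0:=w_0^{2/(\gamma-1)}$, which lies in $H_{s,\delta+1}$ by Proposition \ref{Kateb}, since $w_0\in H_{s+1,\delta+1}\cap L^\infty$ and $s<\tfrac{2}{\gamma-1}+\tfrac12$; this is essentially the only place the upper bound on $s$ is used (cf.\ Remark \ref{rem:sec2-main-result:2}).

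Next I would set up the iteration and prove uniform bounds. Approximate the enlarged data $(U_0,\epsilon_0)$ by $C_0^\infty$ data $(U_0^\rho,\epsilon_0^\rho)$ via Proposition \ref{Density}(b), keeping the $\widehat X_{s,\delta}$--bound and, for $\rho$ small, the ellipticity \eqref{eq:6.5}. Set $\widehat U^{(0)}:=(U_0^\rho,\epsilon_0^\rho)$ and let $\widehat U^{(n+1)}$ solve the linear symmetric hyperbolic system obtained by freezing all coefficients of the enlarged system at $\widehat U^{(n)}$, with data $(U_0^\rho,\epsilon_0^\rho)$. Since the frozen coefficients, the source (linear in $\epsilon^{(n)}$), and the data are all $C_0^\infty$, standard linear theory \cite{majda84:_compr_fluid_flow_system_conser} gives $\widehat U^{(n+1)}\in C^\infty$, compactly supported in $x$ --- this is precisely the step the enlargement rescues. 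One checks inductively that each $\widehat U^{(n)}$ satisfies Assumptions \ref{ass:1}: the matrix entries lie in $H_{s+1,\delta}$ by Proposition \ref{Moser}; $\partial_t\widehat{\mathcal{A}}^0\in L^\infty$ because $\widehat{\mathcal{A}}^0$ depends only on $(v,W)$, so $\partial_t\widehat{\mathcal{A}}^0$ is a bounded function of $(v,W,\partial_t v,\partial_t W)$, all of which embed in $L^\infty$ by Proposition \ref{Embedding}; and the source lies in $H_{s,\delta+1}$ by Propositions \ref{Algebra} and \ref{Kateb}. Feeding this into Lemma \ref{lem:1}, Gronwall's inequality, and the usual continuation argument yields a radius $R$ and a time $T>0$ depending only on $\|(U_0,\epsilon_0)\|_{\widehat X_{s,\delta}}$ (in particular independent of $n$ and $\rho$) with $\sup_{[0,T]}\|\widehat U^{(n)}(t)\|_{\widehat X_{s,\delta}}\leq R$, the metrics $g^{(n)}$ staying uniformly close to $\eta$ and \eqref{eq:6.5} preserved on $[0,T]$.

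For the contraction, the difference $\widehat U^{(n+1)}-\widehat U^{(n)}$ solves a linear system whose coefficients are built from $\widehat U^{(n)},\widehat U^{(n-1)}$ and whose source is Lipschitz, in the $Y_\delta$ norm, in $\widehat U^{(n)}-\widehat U^{(n-1)}$ --- again the enlargement matters here, since the source is now $C^1$ and no non-Lipschitz fractional power remains. Lemma \ref{lem:2}, Gronwall, and shrinking $T$ give $\sup_{[0,T]}\|\widehat U^{(n+1)}(t)-\widehat U^{(n)}(t)\|_{Y_\delta}\leq\tfrac12\sup_{[0,T]}\|\widehat U^{(n)}(t)-\widehat U^{(n-1)}(t)\|_{Y_\delta}$, so $(\widehat U^{(n)})$ converges in $C([0,T],Y_\delta)$ to a limit $\widehat U^\rho$ solving the enlarged nonlinear system with data $(U_0^\rho,\epsilon_0^\rho)$. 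Interpolating the uniform $\widehat X_{s,\delta}$--bound against this convergence gives convergence in $C([0,T],\widehat X_{s',\delta})$ for every $s'<s$, enough to pass to the limit in the equation; a weak-$*$ argument then gives $\widehat U^\rho\in L^\infty([0,T],\widehat X_{s,\delta})$, strong time-continuity follows by the standard argument (continuity of the $\mathcal{A}^0$--weighted energy together with the equation bounding $\partial_t$), and $W,\epsilon\in C^1([0,T],H_{s,\delta+2})$ straight from their evolution equations. Letting $\rho\to0$ and applying Lemma \ref{lem:2} once more to $\widehat U^\rho-\widehat U^{\rho'}$ produces a solution $\widehat U=(U,\epsilon)$ with data $(U_0,\epsilon_0)$ and the regularity \eqref{eq:6.4}. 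Finally, $w$ satisfies its fluid equation, so the transport law above shows $w\geq0$ is preserved and $w^{2/(\gamma-1)}$ solves the same equation \eqref{eq:plan-transport} with the same initial value $\epsilon_0$; uniqueness for this linear transport equation forces $\epsilon\equiv w^{2/(\gamma-1)}$, so $U$ solves the original system \eqref{eq:3.4}. Uniqueness of $U$ is one more application of Lemma \ref{lem:2} to the difference of two solutions.

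The main obstacle is exactly the one flagged in the introduction: making the energy estimates available for the iterates despite the non-$C^\infty$ term $w^{2/(\gamma-1)}$. The enlargement by $\epsilon$ and \eqref{eq:plan-transport} is the mechanism that handles it, and it must be checked carefully that the enlarged system retains the structure required by Lemma \ref{lem:1} --- above all that $\widehat{\mathcal{A}}^0$ stays independent of the derivatives of the unknowns, which is what keeps the commutator estimate at pseudodifferential order $s$ rather than $s+1$ --- and that the a posteriori identification $\epsilon=w^{2/(\gamma-1)}$ indeed holds. The remaining ingredients (uniform bounds, contraction, continuity in time) are the routine Majda machinery once Lemmas \ref{lem:1} and \ref{lem:2} are in hand.
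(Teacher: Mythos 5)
Your proposal is correct in substance and rests on exactly the same key observation as the paper --- that $\epsilon=w^{2/(\gamma-1)}$ obeys a \emph{linear} transport equation with coefficients that are smooth in $(g_{\alpha\beta},w,u^\alpha)$ --- but it deploys that observation differently. The paper does not enlarge the system: it keeps (\ref{eq:3.4}) as is and, inside the iteration, first adjusts the $C_0^\infty$ approximation of the initial datum $w_0$ so that $(w_0^k)^{2/(\gamma-1)}\in C_0^\infty$ (Proposition \ref{prop:6.1}, by adding a small positive constant under a cutoff), and then proves by induction (Proposition \ref{prop:6.2}) that $\epsilon^{k}(t,\cdot)=(w^k)^{2/(\gamma-1)}(t,\cdot)$ stays $C_0^\infty$, by reading off from the linearized $51$st row the transport equation (\ref{eq:6.3}) and solving it by characteristics; this is what legitimizes applying the linear $C_0^\infty$ theory and Lemma \ref{lem:1} to each iterate. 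Your augmented-system route buys something real: Proposition \ref{Kateb} is invoked only once, on the initial data, rather than at every iteration (cf.\ (\ref{eq:6.28})) and again in the contraction step for $\partial f/\partial v$, and the source becomes linear in the new unknown, so Step 4 needs no differentiation of a fractional power. The price is that you must verify the augmented system still satisfies Assumptions \ref{ass:1}: the principal part of the $\epsilon$-slot is the scalar $u^\nu\partial_\nu$, which is fine, but the coupling term $(1+Kw^2)(\nabla_\nu u^\nu)\,\epsilon$ must be frozen at the previous iterate and placed in the zero-order block (its coefficient contains $\partial W$, which lands in $H_{s,\delta+2}\subset H_{s,\delta+1}$, so the extended Lemma \ref{lem:1} goes through), and the $\mathcal{B}$-block now also acts on $\epsilon$, which is a harmless but non-cosmetic modification of the inner product bookkeeping. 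One genuine caveat: in passing from the first row of (\ref{eq:Initial:2}) (equivalently (\ref{eq:eineul:8})) to your transport law you replaced $P^\nu{}_\beta\nabla_\nu u^\beta$ by $\nabla_\nu u^\nu$, which uses the normalization (\ref{eq:publ-broken:2}); for the a posteriori identification $\epsilon\equiv w^{2/(\gamma-1)}$ you should either append the equation with the projection $P^\nu{}_\beta\nabla_\nu u^\beta$ kept intact, or first invoke the conservation of the constraint (the paper's Proposition in Subsection \ref{sec:cons-constr-equat}), otherwise $\epsilon$ and $w^{2/(\gamma-1)}$ solve slightly different transport equations and uniqueness does not force them to agree.
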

The proof of Theorem \ref{thm:1} will be carried out in several steps:
\begin{enumerate}
 \item[{\bf1.}] Setting up  the iterative scheme;
\item[{\bf2.}]  Proving that the fractional power $(w^k)^{2/(\gamma-1)}$ is a
$C_0^\infty$-function;
\item[{\bf3.}] Boundedness of the iteration sequence in the $X_{s,\delta}$-norm;
\item[{\bf4.}] Contraction in a lower norm;
\item[{\bf5.}] Convergence;
\item[{\bf6.}] Uniqueness;
\item[{\bf7.}] Continuity in the $X_{s,\delta}$--norm.
\end{enumerate}

A part of the above proofs are standard, but some of them require a
special attention due to the specific form of the system
(\ref{eq:3.4}) and use of the space $X_{s,\delta}$.
Moreover, the fact that the matrices
$\mathcal{A}^\alpha=\mathcal{A}^\alpha(v,W)$ are not dependent on the
derivative of the semi-metric plays an essential role here.
\vskip 5mm
\noindent
\textbf{Step 1.}
From condition (\ref{eq:6.5}) and the embedding into the continuous,
Proposition \ref{Embedding}, we see that there is a bounded domain
$G\subset \setR^{55}$ containing the initial value $U_0$ and a
constant $c_0\geq1$ such that
\begin{equation}
\label{eq:6.6}
 \dfrac{1}{c_0}V^TV\leq V^T\mathcal{A}^0(v,W)V\leq c_0 V^TV
\end{equation}
for all $ U=(v,\pa_t v,\pa_x v, W)\in G$ and $V\in\setR^{55}$.
By means of the density properties of $H_{s,\delta}$, Proposition
\ref{Density}, there 
exist positive constants $C$ and $R$, and a sequence
\begin{equation}
\label{eq:6.16}
\left\{U_0^k\right\}_{k=0}^\infty=\left\{(\phi^k, \varphi^k,
\partial_x\phi^k,w_0^k, (u_0^\alpha)^k-e_0^\alpha)\right\}_{k=0}^\infty\subset
C_0^\infty(\setR^3),
\end{equation}
such that
\begin{equation}
\label{eq:6.24}
  \left\| U^0_0\right\|_{X_{s+1,\delta}}\leq C  \left\|
  U_0\right\|_{X_{s,\delta}},
\end{equation}
\begin{equation}
  \label{eq:6.9}
  \left\|U- U^0_0\right\|_{X_{s,\delta}}\leq R \Rightarrow U\in
G,
\end{equation}
and
\begin{equation}
\label{eq:6.8}
 \left\| U^k_0 -U_0\right\|_{X_{s,\delta}}\leq \dfrac{R 2^{-k}}{4c_0}.
\end{equation}

The iterative  scheme is defined as follows: let $U^0(t,x)=U^0_0(x)$ and 
$$U^{k+1}(t,x)=(v^{k+1}(t,x),\partial_t v^{k+1}(t,x), \partial_x
v^{k+1}(t,x),W^{k+1}(t,x))$$ be a solution of  the linear Cauchy problem
\begin{equation}
 \label{eq:6.1}
\left\{\begin{array}{l}
\begin{array}{lll}
\mathcal{A}^{0}(v^k,W^k)\partial_t
U^{k+1} &=& \left(\mathcal{A}^a(v^k,W^k)+\mathcal{C}^a\right)\partial_a
U^{k+1} \\ & +&\mathcal{B}(U^k)
\begin{pmatrix}
v^{k+1}\\ \partial_t v^{k+1}\\ \partial_x
v^{k+1}
\end{pmatrix}+\mathcal{F}(v^k,W^k)
\end{array}\\
U^{k+1}(0,x) = U_0^{k+1}(x)
\end{array}\right.,
\end{equation}
where $\mathcal{F}(v^k,W^k)=(0,f(v^k,W^k),0,0)$, $f(v^k,W^k)$ is given by
(\ref{eq:3.10}) and $W^k=(w^k, (u^\alpha)^k-e_0^\alpha)$.
\vskip 5mm
\noindent
\textbf{Step 2.}
The iterative method relies on the fact that solutions of linear
symmetric hyperbolic systems with $C_0^\infty$ coefficients and
initial data, are also $C_0^\infty$.
However, even if $w^k\geq0$ and $w^k\in C^\infty_0$, it does not
guarantee that $(w^k)^{2/(\gamma-1)}$ is a $C_0^\infty$-function.
Since the function $f(v^k,W^k)$ contains the term
$(w^k)^{2/(\gamma-1)}$, we must assure that it is a
$C_0^\infty$-function.
\begin{prop}
\label{prop:6.1}
 Let $u\in H_{s,\delta}$ be non-negative and $\beta>0$. Then there is a
sequence $\{u^k\}\subset C_0^\infty$ such that $u^k\to u$ in the 
$H_{s,\delta}$-norm and $(u^k)^\beta\in C_0^\infty$.
\end{prop}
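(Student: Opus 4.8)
The plan is to build the approximating sequence out of three elementary operations: mollification, which produces smooth approximants while preserving the sign $u\ge 0$; a shift by a small positive constant, so that the map $t\mapsto t^{\beta}$ is only ever evaluated at strictly positive numbers; and multiplication by a cutoff that is \emph{flat} --- i.e.\ vanishes to infinite order --- at the boundary of its support, so that the cutoff stays $C^{\infty}$ after being raised to the power $\beta$. The first ingredient I would isolate is this cutoff: using the usual building block $g(t)=e^{-1/t}$ for $t>0$ and $g(t)=0$ for $t\le 0$, one produces for each $R>0$ a radial $\chi_{R}\in C_{0}^{\infty}(\setR^{3})$ with $0\le\chi_{R}\le 1$, $\chi_{R}\equiv 1$ on $\{|x|\le R\}$, $\supp\chi_{R}\subset\{|x|\le 2R\}$, $\chi_{R}>0$ on $\{|x|<2R\}$, $|\pa^{\alpha}\chi_{R}|\lesssim R^{-|\alpha|}$, and such that $\chi_{R}$ vanishes to infinite order at every point of $\{\chi_{R}=0\}$. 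For such a $\chi_{R}$ the function $\chi_{R}^{\beta}$ also vanishes to infinite order there, hence $\chi_{R}^{\beta}\in C_{0}^{\infty}(\setR^{3})$ for every $\beta>0$.

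Fix $k$, let $\rho_{m}$ be a standard mollifier, and set
\begin{equation*}
 u^{k}:=\chi_{R_{k}}\bigl((u*\rho_{m_{k}})+c_{k}\bigr),
\end{equation*}
where the parameters $R_{k}\to\infty$, $m_{k}\to\infty$, $c_{k}\downarrow 0$ are chosen below. Since $u\ge 0$ we have $u*\rho_{m_{k}}\ge 0$ and $u*\rho_{m_{k}}\in C^{\infty}$, so $u^{k}\in C_{0}^{\infty}(\setR^{3})$; and because $u*\rho_{m_{k}}+c_{k}\ge c_{k}>0$ everywhere, the map $t\mapsto t^{\beta}$ is $C^{\infty}$ on the range of $u*\rho_{m_{k}}+c_{k}$, whence $(u*\rho_{m_{k}}+c_{k})^{\beta}\in C^{\infty}(\setR^{3})$. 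Together with $\chi_{R_{k}}^{\beta}\in C_{0}^{\infty}$ this yields $(u^{k})^{\beta}=\chi_{R_{k}}^{\beta}\,(u*\rho_{m_{k}}+c_{k})^{\beta}\in C_{0}^{\infty}(\setR^{3})$.

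It remains to arrange $u^{k}\to u$ in $H_{s,\delta}$. Writing
\begin{equation*}
 u^{k}-u=\chi_{R_{k}}\bigl(u*\rho_{m_{k}}-u\bigr)+\bigl(\chi_{R_{k}}-1\bigr)u+c_{k}\chi_{R_{k}},
\end{equation*}
I would fix the parameters in turn. By the term-wise structure of the $H_{s,\delta}$-norm, $(\chi_{R_{k}}-1)u$ involves only the dyadic pieces $\psi_{j}$ with $2^{j+1}>R_{k}$, so $\|(\chi_{R_{k}}-1)u\|_{H_{s,\delta}}$ is bounded by the tail of the convergent series defining $\|u\|_{H_{s,\delta}}$; choose $R_{k}\to\infty$ making this term $\le 2^{-k}$. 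Then, with $R_{k}$ fixed, multiplication by $\chi_{R_{k}}$ is bounded on $H_{s,\delta}$ (Proposition \ref{Algebra}, $\chi_{R_{k}}$ being compactly supported), and $u*\rho_{m_{k}}\to u$ in $H_{s,\delta}$ by the standard approximation property of these spaces \cite{BK3,triebel76:_spaces_kudrj2}, so choose $m_{k}$ with $\|\chi_{R_{k}}(u*\rho_{m_{k}}-u)\|_{H_{s,\delta}}\le 2^{-k}$; finally choose $c_{k}\le 2^{-k}(1+\|\chi_{R_{k}}\|_{H_{s,\delta}})^{-1}$. Then $\|u^{k}-u\|_{H_{s,\delta}}\le 3\cdot 2^{-k}\to 0$.

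The genuinely new point, and the only real obstacle, is the interaction between the cutoff and the fractional power: a generic smooth cutoff $\chi$ makes $\chi^{\beta}$ merely H\"older continuous across $\pa\,\supp\chi$, and the mollification $u*\rho_{m_{k}}$ may vanish to finite order at interior points, so both the flatness of $\chi_{R}$ and the constant shift $c_{k}$ are indispensable. Everything else reduces to the definition of $H_{s,\delta}$ and the elementary estimates of Section \ref{sec:properties}.
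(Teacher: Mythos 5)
Your proof is correct and follows essentially the same route as the paper: approximate $u$ by a compactly supported smooth function, shift by a small positive constant so that $t\mapsto t^{\beta}$ is only evaluated away from $0$, and multiply by a cutoff whose $\beta$-th power is still $C_{0}^{\infty}$. You are in fact more careful than the paper on two points it glosses over --- you mollify $u$ so the smooth approximant stays non-negative (the paper's $u^{\varepsilon}$ obtained from the density proposition need not be, so $u^{\varepsilon}+\varrho>0$ is not automatic), and you require the cutoff to vanish to infinite order at the boundary of its support so that $\chi^{\beta}$ is genuinely smooth.
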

\begin{proof}
  Let $\varepsilon>0$, then by Proposition \ref{Density} there is
  $u^\varepsilon\in C_ 0^\infty$ with $\left\|u-
    u^\varepsilon\right\|_{H_{s,\delta}}<\varepsilon$.
  Take now a positive number $M$ so that $\supp(u^\varepsilon)\subset
  \{|x|\leq M\}$, and let $\chi_M$ be the cut-off function satisfying
  $\chi_M(x)=1$ for $|x|\leq M$ and $\chi_M(x)=0$ for $|x|\geq M+1$.
  For any positive number $\varrho$, we set
  \begin{equation*}
 u^{\varepsilon,\varrho}(x)=\chi_M(x)\left(u^{\varepsilon}(x)+\varrho\right).
\end{equation*}
Then $(u^{\varepsilon,\varrho})^\beta\in C_0^\infty$, since
$(u^{\varepsilon}+\varrho)>0$ and $\chi_M$ is a cut-off function. Moreover,
$ u^{\varepsilon,\varrho}- u^{\varepsilon}=\chi_M\varrho$, hence
$u^{\varepsilon,\varrho}\to u^{\varepsilon}$ in the $H_{s,\delta}$-norm as
$\varrho\to0$.
\end{proof}
Thus we may assume that $\{w_0^k\}_{k=0}^\infty$, the $C_0^\infty$ approximation
of the initial data of the Makino variable $w_0$ in 
(\ref{eq:6.16}), satisfies $(w_0^k)^{2/(\gamma-1)}\in C_0^\infty$. We turn now
showing that for $t\geq 0$,
\begin{equation*}
\epsilon^{k}(t,x)=\left(w^k\right)^{\frac{2}{\gamma-1}}(t,x) 
\end{equation*}
is also a $C_0^\infty$-function. 
\begin{prop}
\label{prop:6.2}
 For each integer $k\geq 0$, $\epsilon^{k}(t,\cdot)\in C_0^\infty(\setR^3)$.
\end{prop}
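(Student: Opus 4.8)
The plan is to argue by induction on $k$. For $k=0$ one has $U^0(t,\cdot)=U_0^0\in C_0^\infty(\setR^3)$ for every $t$, and $\epsilon^0=(w_0^0)^{\frac{2}{\gamma-1}}\in C_0^\infty(\setR^3)$ by the choice of the approximating sequence $\{w_0^k\}$ made right after Proposition \ref{prop:6.1}. Assuming that $U^k(t,\cdot)\in C_0^\infty(\setR^3)$ and $\epsilon^k(t,\cdot)=(w^k)^{\frac{2}{\gamma-1}}(t,\cdot)\in C_0^\infty(\setR^3)$ on the interval of existence, I would derive the same conclusions at level $k+1$.

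First I would show $U^{k+1}(t,\cdot)\in C_0^\infty(\setR^3)$. Every entry of the matrices $\mathcal{A}^\alpha(v^k,W^k)$, $\mathcal{C}^a$ and $\mathcal{B}(U^k)$ in the linear Cauchy problem (\ref{eq:6.1}) is a $C^\infty$ function of $(v^k,W^k)$, hence, since $v^k$ and $W^k$ are $C_0^\infty(\setR^3)$ up to the constants $\eta_{\alpha\beta}$ and $e_0^\alpha$, lies in $C_0^\infty(\setR^3)$ up to a constant matrix. The only non-smooth ingredient, the fractional power $(w^k)^{\frac{2}{\gamma-1}}$ occurring in the inhomogeneity $\mathcal{F}(v^k,W^k)=(0,f(v^k,W^k),0,0)$ of (\ref{eq:3.10}), equals $\epsilon^k$, which lies in $C_0^\infty(\setR^3)$ by the inductive hypothesis; so $\mathcal{F}(v^k,W^k)\in C_0^\infty(\setR^3)$. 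Thus (\ref{eq:6.1}) is a linear symmetric hyperbolic Cauchy problem with $C_0^\infty$ coefficients and $C_0^\infty$ data, and by the classical regularity theory and finite propagation speed for such systems (cf.\ \cite{majda84:_compr_fluid_flow_system_conser}) its solution satisfies $U^{k+1}(t,\cdot)\in C_0^\infty(\setR^3)$; in particular $w^{k+1}(t,\cdot)$ and $(u^{k+1})^\alpha(t,\cdot)-e_0^\alpha$ belong to $C_0^\infty(\setR^3)$.

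The crux is then $\epsilon^{k+1}:=(w^{k+1})^{\frac{2}{\gamma-1}}\in C_0^\infty(\setR^3)$, which does \emph{not} follow from $w^{k+1}\in C_0^\infty(\setR^3)$ alone: when $\frac{2}{\gamma-1}$ is not an integer and $w^{k+1}$ has zeros, $(w^{k+1})^{\frac{2}{\gamma-1}}$ may fail to be $C^\infty$ there. Here I would use the first order linear equation alluded to in the Introduction. Dividing the Makino equation $\kappa^2u^\nu\nabla_\nu w+\sigma\kappa\,P^\nu{}_\beta\nabla_\nu u^\beta=0$ by $\kappa^2$, multiplying by $\frac{2}{\gamma-1}w^{\frac{2}{\gamma-1}-1}$ (the $w$-derivative of $\epsilon=w^{\frac{2}{\gamma-1}}$), and using $\sigma=\sqrt{\gamma K}\,w$, $\kappa=\frac{2}{\gamma-1}\frac{\sqrt{\gamma K}}{1+Kw^2}$, so that $\frac{\sigma}{\kappa}=\frac{\gamma-1}{2}w(1+Kw^2)$, one finds that $\epsilon=w^{\frac{2}{\gamma-1}}$ obeys
\begin{equation*}
  u^\nu\nabla_\nu\epsilon+(1+Kw^2)\bigl(P^\nu{}_\beta\nabla_\nu u^\beta\bigr)\,\epsilon=0 ,
\end{equation*}
which is linear and homogeneous in $\epsilon$ once $w$, $u^\alpha$, $g_{\alpha\beta}$ are regarded as given: the fractional power has disappeared, and $\nabla_\nu u^\beta$ involves only $\partial u^\beta$ and the Christoffel symbols, i.e.\ $\partial g$. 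The same computation at level $k+1$, followed by division by $(u^{k+1})^0>0$, shows that $\epsilon^{k+1}$ solves a scalar transport Cauchy problem $\partial_t\zeta+b^a\partial_a\zeta+c\,\zeta=0$, $\zeta(0,\cdot)=(w_0^{k+1})^{\frac{2}{\gamma-1}}$, whose datum is in $C_0^\infty(\setR^3)$ and whose coefficients $b^a,c$ lie in $C_0^\infty(\setR^3)$ (products of the $C_0^\infty$ quantities $w^{k+1}$, $(u^{k+1})^\alpha-e_0^\alpha$, $\partial u^{k+1}$, $\partial v^{k+1}$ and smooth bounded functions of them). Solving by characteristics, the flow $\Phi_t$ of the smooth compactly supported field $b^a$ is complete and equals the identity outside a fixed ball, and $\zeta$ equals its initial value transported by $\Phi_t$ times $\exp\bigl(-\int_0^t c\bigr)$; smooth dependence on data gives $\zeta(t,\cdot)\in C^\infty$, while $\supp\zeta(t,\cdot)\subset\Phi_t\bigl(\supp\zeta(0,\cdot)\bigr)$ is compact, uniformly for $t$ bounded. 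Since $(w^{k+1})^{\frac{2}{\gamma-1}}$ solves the same Cauchy problem (the derivation above is legitimate where $w^{k+1}>0$, and along characteristics emanating from $\{w_0^{k+1}=0\}$ both $\zeta$ and $(w^{k+1})^{\frac{2}{\gamma-1}}$ vanish), uniqueness gives $\epsilon^{k+1}=\zeta\in C_0^\infty(\setR^3)$, which closes the induction.

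I expect this last step to be the main obstacle, since the smoothness of $(w^{k+1})^{\frac{2}{\gamma-1}}$ must be read off from the evolution equation and not from pointwise algebra. The delicate points are: that the derived transport equation genuinely has $C^\infty$ coefficients, i.e.\ that the fractional power is not reintroduced, which works because the coefficients $\sigma,\kappa$ of (\ref{eq:Initial:2}) are $C^\infty$ functions of $w$; the bookkeeping ensuring that $w^{k+1}$ satisfies the Makino equation with mutually consistent arguments, so that $(w^{k+1})^{\frac{2}{\gamma-1}}$ is genuinely a solution of the transport problem; the behaviour along characteristics that start on the zero set of $w_0^{k+1}$; and the uniform-in-$t$ control of supports, for which finite propagation speed for (\ref{eq:6.1}) is invoked once more.
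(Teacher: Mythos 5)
Your proposal is correct and rests on exactly the idea of the paper's proof: the smoothness of $(w^{k+1})^{2/(\gamma-1)}$ is not obtained by pointwise algebra but by observing that $\epsilon$ obeys a first-order \emph{linear} transport equation with $C_0^\infty$ coefficients and data, which is then solved by characteristics; and the induction closes because the only non-smooth coefficient of the linear system (\ref{eq:6.1}), namely $(w^k)^{2/(\gamma-1)}$ inside $\mathcal{F}$, is $\epsilon^k$. The one structural difference is in how the continuity equation is linearized. You keep $\epsilon^{k+1}$ as a zero-order unknown and obtain the homogeneous problem $\partial_t\zeta+b^a\partial_a\zeta+c\,\zeta=0$, solved by the transported datum times $\exp(-\int_0^t c)$; the induction hypothesis on $\epsilon^k$ is then needed only to make the coefficients of (\ref{eq:6.1}) smooth. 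The paper instead freezes the entire factor $(\epsilon+p)=\epsilon(1+Kw^2)$ at level $k$, so that in (\ref{eq:6.2}) the term $\epsilon^{k}(1+K(w^k)^2)P^\nu{}_\alpha\partial_\nu(u^\alpha)^{k+1}$ is a pure inhomogeneity: equation (\ref{eq:6.3}) has no zero-order term, the solution along characteristics is $Z(0,y)-\int_0^s c$, and the hypothesis $\epsilon^k\in C_0^\infty$ is used a second time, directly in the source. The ``delicate bookkeeping point'' you flag is real, but it is shared by the paper rather than resolved by it: since (\ref{eq:6.1}) evolves $w^{k+1}$ with $\sigma,\kappa,u^\nu$ frozen at level $k$, the equation that $(w^{k+1})^{2/(\gamma-1)}$ literally satisfies carries the factor $w^k\,(w^{k+1})^{\frac{2}{\gamma-1}-1}$, which is neither your $\epsilon^{k+1}$ nor the paper's $\epsilon^{k}$; the paper simply declares (\ref{eq:6.2}) to be ``the linearization of (\ref{eq:eineul:8})'' and takes it as the defining equation for $\epsilon^{k+1}$. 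One small point of alignment with the scheme: the transport field is $(u^\nu)^k$, not $(u^\nu)^{k+1}$, so the division should be by $(u^0)^k$, which the paper justifies via $u^0(0,\cdot)\geq 1$.
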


\begin{proof}
  We conduct the proof by induction.
  Obviously the statement holds when $k=0$.
  Now the $51^{th}$ row of the system (\ref{eq:6.1}) is equivalent to
  (\ref{eq:eineul:8}), so the linearization of it results in
  \begin{equation}
\label{eq:6.2}
\begin{split}
 (u^0)^k\partial_t\epsilon^{k+1}&+(u^a)^k\partial_a\epsilon^{k+1}
+\epsilon^{k}(1+K
(w^k)^2)P_\alpha^\nu\left(g_{\alpha\beta}^k, (u^\beta)^k\right)\partial_v
(u^\alpha)^{k+1} \\ &+\epsilon^{k}(1+K
(w^k)^2)P_\alpha^\nu\left(g^k_{\alpha\beta},
(u^\beta)^k\right)\left(\Gamma_{\nu\mu}^\alpha\right)^k(u^\mu)^k=0,
\end{split}
\end{equation}
where the $P_\alpha^\nu(\cdot,\cdot)$ is the projection
of equation (\ref{eq:eineul:8}) and
$\left(\Gamma_{\nu\mu}^\alpha\right)^k$ are the Christoffel symbols
with respect to the semi-metric $g^k_{\alpha\beta}$.
It follows from \cite[Theorem 2.6]{BK3}, that $u^0(0,x)\geq 1$, hence
$(u^0)^k(0,x)\geq 1$ and therefore we can divide (\ref{eq:6.2}) by
$(u^0)^k$ and we conclude that $\epsilon^{k+1}$ satisfies a first
order linear equation of the form
\begin{equation}
\label{eq:6.3}
\left\{\begin{array}{l}
 \partial_t \epsilon^{k+1}+b_a(t,x)\partial_a \epsilon^{k+1}+c(t,x)=0\\
\epsilon^{k+1}(0,x)=(w^{k+1}_0(x))^{\frac{2}{\gamma-1}}
\end{array}\right..
\end{equation}

Note that $c(t,x)$ contains the term $\epsilon^{k}$, but this term is
$C_0^\infty$ by the induction hypothesis.
Hence all the coefficients of (\ref{eq:6.3}) are
$C_0^\infty$-functions.
We solve (\ref{eq:6.3})  by means of the characteristic method. So let
$\Phi(s,y)$ be the solution of the system
\begin{equation*}
\dfrac{d t}{d s}=1, \ \
\dfrac{dx_a}{ds}=b_a(t,x), \ \  t(0)=0, \ \ x(0)=y.
\end{equation*}
Then $\epsilon^{k+1}(t,x)=Z(\Phi^{-1}(t,x))$, where $Z(s,y)$ is the solution of
then initial value problem
\begin{equation*}
\dfrac{dZ}{ds}=-c(s,x), \ \ \ \  Z(0,y)=(w^{k+1}_0(y))^{\frac{2}{\gamma-1}}.
\end{equation*}
Obviously
\begin{equation*}
 Z(s,y)=(w^{k+1}_0(y))^{\frac{2}{\gamma-1}}-\int_0^s c(\tau,x(\tau))d\tau.
\end{equation*}
Since $c(\tau,\cdot)\in C_0^\infty$ and
$(w^{k+1}_0)^{\frac{2}{\gamma-1}}\in C_0^\infty$ by Proposition
\ref{prop:6.1},
$Z(s,\cdot)$ also belongs to $C_0^\infty$, and hence
also $Z(\Phi^{-1}(t,\cdot))=\epsilon^ {k+1}
(t,\cdot)=\left(w^{k+1}\right)^{\frac{2}{\gamma-1}}(t, \cdot)\in
C_0^\infty$.
\end{proof}
\vskip 5mm
\noindent
\textbf{Step 3.}
We conclude from Proposition \ref{prop:6.1}, and the theory of linear
symmetric hyperbolic systems (c.f.
\cite{john86:_partial}), that for each $ k$ there is a solution
$U^k(t,x)$ of the linear system (\ref{eq:6.1}) such that
$U^k(t,\cdot)\in C_0^\infty(\setR^3)$.
Therefore by (\ref{eq:6.24}), (\ref{eq:6.9}) and (\ref{eq:6.8}), for
each $k$ we have
\begin{equation*}
 T_k=\sup\left\{T: \sup_{0\leq t\leq T} 
\left\|U^k(t)-U_0^0\right\|_{X_{s,\delta}}\leq R\right\}>0.
\end{equation*}
\begin{prop}
\label{prop:4}
There are  positive constants $T^*$ and $L$ such that
\begin{equation}
\label{eq:6.10}
\sup\left\{T: \sup_{0\leq
t\leq T}\left\|U^k(t)-U_0^0\right\|_{X_{s,\delta}}\leq R
\right\}\geq T^*\ \ \ \text{for all}\ \ k,
\end{equation} 
and
\begin{equation}
\label{eq:6.7}
\sup_{0\leq t\leq T^*}\left\|\partial_t W^k\right\|_{H_{s,\delta+2}}\leq L
 \ \ \ \text{for all}\ \ k.
\end{equation} 
 
\end{prop}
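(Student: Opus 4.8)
\emph{Proof proposal.} The plan is to run one induction on $k$ that carries three uniform statements simultaneously on a single interval $[0,T^*]$: that $\|U^k(t)-U_0^0\|_{X_{s,\delta}}\le R$ (equivalently, $U^k(t)\in G$), that $\|U^k(t)\|_{X_{s,\delta}}\le N$, and that $\|\partial_t W^k(t)\|_{H_{s,\delta+2}}\le L$, where $N$, $L$ and $T^*$ will be chosen independently of $k$; the first and third of these are exactly (\ref{eq:6.10}) and (\ref{eq:6.7}). The base case $k=0$ is immediate, since $U^0(t)\equiv U_0^0$ (so $\partial_t W^0\equiv0$) and $\|U_0^0\|_{X_{s,\delta}}\le\|U_0^0\|_{X_{s+1,\delta}}\le C\|U_0\|_{X_{s,\delta}}$ by Proposition \ref{monoton} and (\ref{eq:6.24}). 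The engine of the inductive step is the $X_{s,\delta}$--energy estimate of Lemma \ref{lem:1} together with Gronwall's inequality, applied to the linear problem (\ref{eq:6.1}), whose solution $U^{k+1}(t)\in C_0^\infty(\setR^3)$ exists by Propositions \ref{prop:6.1} and \ref{prop:6.2} and the theory of linear symmetric hyperbolic systems.

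Assuming the three statements at level $k$, the first thing I would do is check that the coefficients of (\ref{eq:6.1}) satisfy Assumptions \ref{ass:1} with $k$--independent bounds on $[0,T^*]$. On that interval $U^k(t)\in G$, so (\ref{eq:6.6}) supplies the ellipticity bound (\ref{eq:5.3}) for $\mathcal A^0(v^k,W^k)$ with the fixed $c_0$. The matrices $\mathcal A^0-\mathbf e_{55}$, $\mathcal A^a$ and the blocks $\mathbf b_{2j},\mathbf b_{4j}$ are built from $C^\infty$ functions of the components of $U^k$ (subtracting the constant $\mathcal C^a$ changes nothing in $H_{s,\delta}$), so the Moser--type estimate Proposition \ref{Moser}, the algebra property Proposition \ref{Algebra}, the regularity $v^k\in H_{s+1,\delta}$ from Remark \ref{rem:1} and the embedding Proposition \ref{Embedding} control all the weighted norms required by Assumptions \ref{ass:1} in terms of $N$; the condition (\ref{eq:5.9}), $\partial_t\mathcal A^0\in L^\infty$, follows from $\partial_t v^k\in H_{s,\delta+1}\hookrightarrow L^\infty$ and $\partial_t W^k\in H_{s,\delta+2}\hookrightarrow L^\infty$ (here both $s>\tfrac32$ and the level--$k$ bound on $\partial_t W^k$ are used, which is why all three statements must be carried together). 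The one delicate term is $\mathcal F(v^k,W^k)=(0,f,0,0)$: by (\ref{eq:3.10}), $f$ carries the fractional power $(w^k)^{2/(\gamma-1)}$, and this is exactly where I would invoke Proposition \ref{Kateb} with $\beta=\tfrac{2}{\gamma-1}$ --- legitimate precisely because $s<\tfrac{2}{\gamma-1}+\tfrac12$ --- to get $\|(w^k)^{2/(\gamma-1)}\|_{H_{s,\delta+1}}\le C(\|w^k\|_{L^\infty})\|w^k\|_{H_{s,\delta+1}}$, with $\|w^k\|_{L^\infty}$ bounded by Proposition \ref{Embedding} and the level--$k$ bound on $W^k$; multiplying by the smooth bounded factors $(1-K(w^k)^2)g^k_{\alpha\beta}$ and $(1+K(w^k)^2)u^k_\alpha u^k_\beta$ via Propositions \ref{Moser} and \ref{Algebra} then gives $\mathcal F(v^k,W^k)\in H_{s,\delta+1}$ with a bound depending only on $N$.

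With Assumptions \ref{ass:1} verified, Lemma \ref{lem:1} gives $\frac{d}{dt}E^{k+1}(t)\le Cc_0(E^{k+1}(t)+1)$ on $[0,T^*\wedge T_{k+1}]$, where $E^{k+1}(t)=\langle U^{k+1}(t),U^{k+1}(t)\rangle_{X_{s,\delta,\mathcal A^0(v^k,W^k)}}$ and $C$ is independent of $k$; Gronwall then yields $E^{k+1}(t)\le(E^{k+1}(0)+1)e^{Cc_0t}-1$. By the equivalence (\ref{eq:5.4}) and by (\ref{eq:6.8}) (which forces $U_0^k\in G$, so the initial ellipticity constant is $\le c_0$), $E^{k+1}(0)\le c_0\|U_0^{k+1}\|_{X_{s,\delta}}^2\le c_0\big(\|U_0\|_{X_{s,\delta}}+\tfrac{R}{4c_0}\big)^2=:\Lambda_0$, a $k$--independent quantity; fixing $N^2\ge 2c_0(\Lambda_0+1)$ and then $T^*$ so small that $e^{Cc_0T^*}\le2$ forces $\|U^{k+1}(t)\|_{X_{s,\delta}}^2\le c_0E^{k+1}(t)\le N^2$ on $[0,T^*\wedge T_{k+1}]$. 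To upgrade this to $\|U^{k+1}(t)-U_0^0\|_{X_{s,\delta}}\le R$ --- i.e.\ to $T_{k+1}\ge T^*$ --- I would decompose $U^{k+1}(t)-U_0^0=(U^{k+1}(t)-U_0^{k+1})+(U_0^{k+1}-U_0^0)$, bound the second summand by $\|U_0^{k+1}-U_0\|_{X_{s,\delta}}+\|U_0-U_0^0\|_{X_{s,\delta}}\le\tfrac{R}{2c_0}\le\tfrac R2$ using (\ref{eq:6.8}), and control the first summand by shrinking $T^*$ further, exploiting the $k$--uniform bound on the time derivative of $t\mapsto E^{k+1}(t)$ on $[0,T^*]$; the standard continuity/maximality argument then shows $T_{k+1}\ge T^*$ cannot fail, since otherwise $t\mapsto\|U^{k+1}(t)-U_0^0\|_{X_{s,\delta}}$ would have to attain the value $R$ at some $t<T^*$, contradicting the bound just obtained. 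This establishes (\ref{eq:6.10}).

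Finally, for (\ref{eq:6.7}) I would solve the fluid block of (\ref{eq:6.1}) --- its last five rows --- for $\partial_t W^{k+1}$, using that $\mathbf a_{44}^0(g^k,w^k,u^k)$ is positive definite and smooth in its arguments, so that $\partial_t W^{k+1}=(\mathbf a_{44}^0)^{-1}\big(\sum_a\mathbf a_{33}^a\,\partial_a W^{k+1}+(\text{terms of order zero in }U^{k+1})\big)$. Since $W^{k+1}\in H_{s+1,\delta+1}$ gives $\partial_a W^{k+1}\in H_{s,\delta+2}$ by Proposition \ref{embedding1}, and the coefficients and the zeroth--order terms are bounded in the appropriate weighted norms in terms of $N$ by Propositions \ref{Moser} and \ref{Algebra}, the multiplication property of the $H_{s,\delta}$--spaces yields $\|\partial_t W^{k+1}\|_{H_{s,\delta+2}}\le L$ with $L$ independent of $k$, closing the induction. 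The hard part is the verification in the second paragraph --- keeping all coefficients controlled uniformly in $k$ in the presence of the non--smooth fractional power $(w^k)^{2/(\gamma-1)}$, which is exactly what forces the regularity ceiling $s<\tfrac{2}{\gamma-1}+\tfrac12$ via Proposition \ref{Kateb} --- together with the somewhat delicate passage, in the third paragraph, from the a priori energy bound on $\|U^{k+1}(t)\|_{X_{s,\delta}}$ to the ball condition $\|U^{k+1}(t)-U_0^0\|_{X_{s,\delta}}\le R$ that feeds the next step of the induction.
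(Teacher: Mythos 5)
Your verification of Assumptions \ref{ass:1} (including the use of Proposition \ref{Kateb} for the fractional power $(w^k)^{2/(\gamma-1)}$, and the need to carry the bound on $\partial_t W^k$ through the induction in order to control (\ref{eq:5.9})) and your derivation of (\ref{eq:6.7}) by solving the fluid block for $\partial_t W^{k+1}$ both match the paper. The gap is in your third paragraph, in the passage from the energy bound on $\|U^{k+1}(t)\|_{X_{s,\delta}}$ to the ball condition. You apply Lemma \ref{lem:1} to $E^{k+1}(t)=\langle U^{k+1},U^{k+1}\rangle_{X_{s,\delta,\mathcal{A}^0}}$ and then propose to bound $\|U^{k+1}(t)-U_0^{k+1}\|_{X_{s,\delta}}$ by ``exploiting the $k$--uniform bound on the time derivative of $t\mapsto E^{k+1}(t)$''. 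But a bound $|\tfrac{d}{dt}E^{k+1}|\le M$ only controls $\big|\,\|U^{k+1}(t)\|^2-\|U_0^{k+1}\|^2\big|$, i.e.\ the difference of the (squared) norms, not the norm of the difference: in a Hilbert space a curve can keep its norm essentially constant while moving an arbitrary distance from its starting point. The alternative of integrating $\|\partial_t U^{k+1}(\tau)\|_{X_{s,\delta}}$ in time also fails, because that norm contains $\|\partial_x\partial_t v^{k+1}\|_{H_{s,\delta+1}}$, one derivative more than the scheme controls uniformly in $k$; the iterates are Lipschitz in $t$ only with values in the lower space $X_{s-1,\delta}$ (the same loss of a derivative that forces the contraction in Step 4 to be carried out in $Y_\delta$ rather than $X_{s,\delta}$). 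So the ``standard continuity/maximality argument'' has nothing quantitative to contradict, and (\ref{eq:6.10}) is not established.

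The paper avoids this by applying the energy estimate not to $U^{k+1}$ but to the shifted unknown $V^{k+1}=U^{k+1}-U_0^0$, which satisfies the linear system (\ref{eq:6.13}) with the additional inhomogeneous term $\mathcal{D}^k$ of (\ref{eq:6.34}); that term must then also be estimated in $H_{s,\delta+1}$ uniformly in $k$, which follows from the multiplication property and the bounds already obtained for $\mathcal{A}^\alpha$ and $\mathcal{B}$. Since $V^{k+1}(0)=U_0^{k+1}-U_0^0$ is small in $X_{s,\delta}$ by (\ref{eq:6.8}), Gronwall applied to $\langle V^{k+1},V^{k+1}\rangle_{X_{s,\delta,\mathcal{A}^0}}$ yields $\sup_{0\le t\le T}\|V^{k+1}(t)\|_{X_{s,\delta}}^2\le e^{Cc_0T}\left(\tfrac{R^2}{8}+Cc_0^2T\right)$ directly, and choosing $T^*$ with $e^{Cc_0T^*}\left(\tfrac{R^2}{8}+Cc_0^2T^*\right)<R^2$ gives (\ref{eq:6.10}) with no norm-of-difference upgrade needed. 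Restructure your third paragraph along these lines; the remaining steps of your proposal then go through.
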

\begin{proof}
We prove it by induction. Set $V^{k+1}=U^{k+1}-U_0^0$, then $V^k$
satisfies the linear initial value problem 
\begin{eqnarray}
\label{eq:6.13}
\left\{\begin{array}{rll}
\mathcal{A}^{0}(v^k,W^k)\partial_t
V^{k+1} &=& \left(\mathcal{A}^a(v^k,W^k)
+\mathcal{C}^a\right)\partial_a
V^{k+1} \\ & + & \mathcal{B}(U^k)
\begin{pmatrix}
v^{k+1}-\phi^0_0\\ \partial_t v^{k+1}-\varphi^0_0\\ \partial_x\left(
v^{k+1}-\phi^{0}_0\right)
\end{pmatrix} + \mathcal{F}(v^k,W^k) + \mathcal{D}^k
\\ V^{k+1}(0,x) & = & U_0^{k+1}(x)-U_0^0(x),
\end{array}\right.
\end{eqnarray}
where
\begin{equation}
\label{eq:6.34}
 \mathcal{D}^k= \left(\mathcal{A}^a(v^k,W^k)+\mathcal{C}^a\right)\partial_a
U^{0}_0  + \mathcal{B}(U^k)\begin{pmatrix}\phi^{0}_0\\ \varphi^{0}_0\\
\partial_x\phi^{0}_0\end{pmatrix}.
\end{equation}

In order to apply the energy estimate, Lemma \ref{lem:1}, to
(\ref{eq:6.13}) we have to check that Assumptions \ref{ass:1} are
satisfied and the corresponding norms are independent of $k$.
Note that all the matrices have the same structure, and from
(\ref{eq:6.6}) we see that condition \ref{eq:5.3} holds.
 By the induction hypothesis,
\begin{equation*}
 \left\|v^k(t)-\phi^0_0\right\|_{H_{s,\delta,2}}^2+
\left\|\partial_x v^k(t)-\partial_x\phi^0_0\right\|_{H_{s,\delta+1,2}}^2\leq
\left\|U^k(t)-U_0^0\right\|_{X_{s,\delta}}^2\leq R^2,
\end{equation*}
therefore by Remark \ref{rem:1},
$\left\|v^k(t)-\phi^0_0\right\|_{H_{s+1,\delta}}\lesssim R$. Applying the
Moser type estimate, Proposition \ref{Moser}, we have
\begin{equation*}
 \begin{split}
&\left\|\mathcal{A}^0(v^k,W^k)-{\bf
e}_{55}\right\|_{H_{s+1,\delta}}^2\lesssim\left(
\left\|v^k\right\|_{H_{s+1,\delta}}^2 +
\left\|W^k\right\|_{H_{s+1,\delta}}^2\right) \\ \lesssim &
\left\|U^k - U_0^0\right\|_{X_{s,\delta}}^2 +
\left\| U_0^0\right\|_{X_{s,\delta}}^2\lesssim\left(R^2+\left\|
U_0^0\right\|_{X_{s,\delta}}^2\right).
 \end{split}
\end{equation*}

In a similar way we get that
$\left\|\mathcal{A}^a(v^k,W^k)\right\|_{H_{s+1,\delta}}$ is bounded by
a constant depending on $R$.
By Propositions \ref{monoton}, \ref{Algebra} ,\ref{Moser}, Remark
\ref{rem:2} and the structure of the matrix $\mathcal{B}$ in
(\ref{eq:3.6}), we obtain that $\left\|{\bf
    b}_{2j}(U^k)\right\|_{H_{s,\delta+1}}\lesssim
\|U^k\|_{X_{s,\delta}}\lesssim\left( R+
  \|U^0_0\|_{X_{s,\delta}}\right)$ and a similar estimate holds for
$\left\|{\bf b}_{4j}(U^k)\right\|_{H_{s,\delta+1}}$, $j=2,3,4,5$.
We recall the $\mathcal{F}(v^k,W^k)=(0,f(v^k,W^k),0,0)$, where
$f(v^k,W^k)$ is given by (\ref{eq:3.10}).
Applying again Propositions \ref{Algebra}, \ref{Moser} and Remark
\ref{rem:2}, we obtain that
\begin{equation}
\label{eq:6.27}
\begin{split}
 \left\| f(v^k,W^k)\right\|_{H_{s,\delta+1}} \lesssim
\left\|(w^k)^{\frac{2}{\gamma-1}}\right\|_{H_{s,\delta+1}}\left(\left\|
v^k\right\|_{H_{s,\delta+1}} + \left\|
(u^\alpha)^k-e_0^\alpha\right\|_{H_{s,\delta+1}}+1\right).
\end{split}
\end{equation} 
Now, for $\frac{3}{2}<s<\frac{2}{\gamma-1}+\frac{1}{2}$, we apply
the estimate for the fractional power, Proposition \ref{Kateb},
and obtain that
\begin{equation}
\label{eq:6.28}
 \left\|(w^k)^{\frac{2}{\gamma-1}}\right\|_{H_{s,\delta+1}}\lesssim
\left\|w^k\right\|_{H_{s,\delta+1}}\leq
\left\|w^k\right\|_{H_{s+1,\delta+1}}. 
\end{equation} 
Since $\left\|v^k\right\|_{H_{s,\delta+1}}$ and $\left\|
W^k\right\|_{H_{s,\delta+1}}$ are bounded by a constant independent of $k$, it
follows from \protect{ (\ref{eq:6.27}) and (\ref{eq:6.28})} that also
 $\left\| f(v^k,W^k)\right\|_{H_{s,\delta+1}}$ is bounded by a constant
independent of $k$.

The required estimate for $\mathcal{D}^k$ defined in (\ref{eq:6.34}) follows
from the
multiplicity property (\ref{eq:elliptic_part:12}) and the estimates
which we have already obtained for
$\left\|\mathcal{A}^\alpha(v^k,W^k)\right\|_{H_{s+1,\delta}}$ and
$\left\|\mathcal{B}(U^k)\right\|_{H_{s,\delta+1}}$.

It remains to verify (\ref{eq:5.9}); note that by the induction
hypothesis (\ref{eq:6.10}), condition (\ref{eq:6.9}) and the embedding
(\ref{eq:em:2}), we have
\begin{equation*}
 \begin{split}
  &\left\| \partial_t\mathcal{A}^0(v^k,W^k)\right\|_{L^\infty}\leq
\sup_{G}\Big|\frac{\partial \mathcal{A}^0}{\partial v}(v,W)\Big|\left\|
\partial_t v^k\right\|_{L^\infty}\\ +\sup_{G} & \Big|\frac{\partial
\mathcal{A}^0}{\pa
W}(v,W)\Big|\left\|
\partial_t W^k\right\|_{L^\infty}\lesssim\left(\left\|
\partial_t v^k\right\|_{H_{s,\delta+1}}+
\left\|\partial_t W^k\right\|_{H_{s,\delta+2}}\right).
\end{split}
\end{equation*}
Since $\left\|\partial_t W^k\right\|_{H_{s,\delta+2}}$ is bounded by
hypothesis (\ref{eq:6.7}), we see that $\left\|
  \partial_t\mathcal{A}^0(v^k,W^k)\right\|_{L^\infty}$  is also bounded
by a constant depending
on $R$ but not on $k$.
We now apply Lemma \ref{lem:1}, and with the combination of Gronwall's
inequality, condition (\ref{eq:6.8}) and the equivalence
(\ref{eq:5.4}), we conclude that there is a constant $C$ depending on
$R$ such that
\begin{equation}
\label{eq:6.26}
\sup_{0\leq t\leq T} \left\|V^{k+1}(t)\right\|_{X_{s,\delta}}^2\leq
e^{Cc_0T}\left(\frac{R^2}{8}+Cc_0^2T\right).
\end{equation}

We turn now to show (\ref{eq:6.7}).
It follows from the structure of the matrices $\mathcal{A}^0$,
$\mathcal{A}^a$ and $\mathcal{B}$ (see (\ref{eq:3.7}), (\ref{eq:3.9})
and (\ref{eq:3.6})) that
\begin{equation}
\label{eq:6.11}
\begin{split}
\partial_t W^{k+1}&= \left( {\bf a}_{44}^0(v^k,W^k)\right)^{-1}
\left[ \sum_{a=1}^3 {\bf a}_{33}^a(v^k,W^k)\partial_a W^{k+1}\right. \\ & \left.
+  {\bf b}_{42}(U^k)\partial_t v^{k+1}+\sum_{a=1}^3{\bf
b}_{4(a+2)}(U^k)\partial_a v^{k+1}\right].
\end{split}
\end{equation}
Note that $\|v^k(t)\|_{H_{s+1,\delta}}$,
$\|W^k(t)\|_{H_{s+1,\delta+1}}$ and $\|U^k(t)\|_{X_{s,\delta}}$ are
bounded by a constant independent of $k$, while $\|\partial_t
v^{k+1}(t)\|_{H_{s,\delta+1}}$, $\|\partial_a
v^{k+1}(t)\|_{H_{s,\delta+1}}$ and $\|\partial_a
W^{k+1}(t)\|_{H_{s,\delta+2}}$ are bounded by (\ref{eq:6.26}).
Hence  applying the  calculus of  the $H_{s,\delta}$--spaces to
(\ref{eq:6.11}), we obtain that  $\left\|\partial_t
  W^{k+1}\right\|_{H_{s,\delta+2}}$ is also bounded by a constant
independent of $k$.
Choosing $T^\ast$ so that
\begin{equation*}
 e^{Cc_0T^\ast}\left(\frac{R^2}{8}+Cc_0^2T^\ast\right)<R^2
\end{equation*}
completes the proof of the Proposition.
\end{proof}
\vskip 5mm
\noindent
\textbf{Step 4.}
Here  we show contraction in  the weighted $L^2$--norm. Our method
relies on the $L^2_\delta$-energy estimates, Lemma \ref{lem:2}
\begin{prop}
\label{prop:6.3}
 There exists positive constants $T^{\ast\ast}\leq T^\ast$ and $\Lambda<1$, and 
a positive sequence $\{\beta_k\}$ with $\sum \beta_k<\infty$ such that 
\begin{equation}
\label{eq:6.17}
\sup_{0\leq t\leq
T^{\ast\ast}}\left\|U^{k+1}(t)-U^k(t)\right\|_{Y_\delta,\mathcal{A}^0}\leq
\Lambda
\sup_{0\leq t\leq
T^{\ast\ast}}\left\|U^{k}(t)-U^{k-1}(t)\right\|_{Y_\delta,\mathcal{A}^0}
+\beta_k.
\end{equation}
\end{prop}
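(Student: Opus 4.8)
The strategy is to set $Z^{k+1} := U^{k+1} - U^k$ and derive a linear symmetric hyperbolic system for $Z^{k+1}$ by subtracting the equation (\ref{eq:6.1}) at level $k$ from the one at level $k+1$, then to apply the $L^2_\delta$--energy estimate of Lemma \ref{lem:2} to this difference system and close the recursion with Gronwall's inequality. First I would write
\begin{equation*}
\mathcal{A}^0(v^k,W^k)\partial_t Z^{k+1}
= \bigl(\mathcal{A}^a(v^k,W^k)+\mathcal{C}^a\bigr)\partial_a Z^{k+1}
+ \mathcal{B}(U^k)\,(v^{k+1},\partial_t v^{k+1},\partial_x v^{k+1})^T
- \mathcal{B}(U^{k-1})\,(v^{k},\partial_t v^{k},\partial_x v^{k})^T
+ \mathcal{F}(v^k,W^k) - \mathcal{F}(v^{k-1},W^{k-1})
+ \widetilde{\mathcal{D}}^k,
\end{equation*}
where the ``error'' term
\begin{equation*}
\widetilde{\mathcal{D}}^k = \bigl(\mathcal{A}^a(v^k,W^k)-\mathcal{A}^a(v^{k-1},W^{k-1})\bigr)\partial_a U^k
+ \bigl(\mathcal{A}^0(v^{k-1},W^{k-1})-\mathcal{A}^0(v^k,W^k)\bigr)\partial_t U^k
\end{equation*}
collects the contributions coming from the fact that the coefficient matrices are evaluated at the previous iterate. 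Regrouping, the right-hand side has the form $\bigl(\mathcal{A}^a(v^k,W^k)+\mathcal{C}^a\bigr)\partial_a Z^{k+1} + \mathcal{B}(U^k)(v^{k+1}-v^k,\ldots)^T + \mathcal{F} + \mathcal{D}$ with $\mathcal{F}$-type and $\mathcal{D}$-type source terms, so it fits the template (\ref{eq:5.1}) to which Lemma \ref{lem:2} applies, with the coefficient matrices evaluated along the bounded iterate $U^k$.

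Next I would verify that the hypotheses of Lemma \ref{lem:2} hold uniformly in $k$: this is exactly the content of Step 3, since Proposition \ref{prop:4} gives that $\|U^k(t)\|_{X_{s,\delta}}$, $\|\partial_t W^k\|_{H_{s,\delta+2}}$, and hence (by the embedding (\ref{eq:em:2}) and the Moser estimate, Proposition \ref{Moser}) the $L^\infty$--norms of $\mathcal{A}^\alpha(v^k,W^k)$, $\partial_\alpha\mathcal{A}^\alpha(v^k,W^k)$ and $\mathcal{B}(U^k)$ are all bounded by a constant depending only on $R$ and the initial data. The source terms must then be estimated in $L^2_{\delta+1}$ in terms of $\|Z^k\|_{Y_\delta,\mathcal{A}^0}$. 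For the $\mathcal{B}$ difference one writes $\mathcal{B}(U^k)(v^{k+1}-v^k,\ldots)^T = \mathcal{B}(U^k)(\text{components of }Z^{k+1}) + (\mathcal{B}(U^k)-\mathcal{B}(U^{k-1}))(v^k,\ldots)^T$; the first piece is absorbed into the Gronwall term on the left, while the second, together with $\mathcal{F}(v^k,W^k)-\mathcal{F}(v^{k-1},W^{k-1})$ and $\widetilde{\mathcal{D}}^k$, is bounded by $C\,\|Z^k\|_{Y_\delta}$ using the mean value theorem for $C^\infty$ coefficient functions, the uniform $L^\infty$ and $X_{s,\delta}$ bounds, and the multiplication property. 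The delicate subterm is $\mathcal{F}(v^k,W^k)-\mathcal{F}(v^{k-1},W^{k-1})$, which involves the difference $(w^k)^{2/(\gamma-1)}-(w^{k-1})^{2/(\gamma-1)}$; here one uses that along the iterates $\epsilon^k=(w^k)^{2/(\gamma-1)}$ is a genuine $C_0^\infty$ function (Propositions \ref{prop:6.1} and \ref{prop:6.2}) satisfying the transport equation (\ref{eq:6.3}), so that $\epsilon^k-\epsilon^{k-1}$ itself obeys a linear transport equation with source controlled by $Z^{k-1}$, giving an $L^2_{\delta+1}$ bound of the form $C\sup_{[0,t]}\|Z^{k-1}\|_{Y_\delta}$ plus a term $\beta_k$ coming from the difference of the initial data $(w_0^{k+1})^{2/(\gamma-1)}-(w_0^k)^{2/(\gamma-1)}$, which by (\ref{eq:6.8}) is summable.

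Finally, feeding these bounds into Lemma \ref{lem:2} and applying Gronwall's inequality on $[0,T^*]$ yields
\begin{equation*}
\sup_{0\le t\le T}\|Z^{k+1}(t)\|_{Y_\delta,\mathcal{A}^0}^2
\le e^{Cc_0 T}\Bigl(\|Z^{k+1}(0)\|_{Y_\delta,\mathcal{A}^0}^2 + Cc_0 T\,\sup_{0\le t\le T}\|Z^k(t)\|_{Y_\delta,\mathcal{A}^0}^2 + \beta_k^2\Bigr),
\end{equation*}
and shrinking the time to $T^{\ast\ast}\le T^*$ so that $e^{Cc_0 T^{\ast\ast}}Cc_0 T^{\ast\ast}=:\Lambda^2<1$, together with $\|Z^{k+1}(0)\|_{Y_\delta}\lesssim \|U_0^{k+1}-U_0^k\|_{X_{s,\delta}}\le R2^{-k}/(2c_0)$ from (\ref{eq:6.8}), produces (\ref{eq:6.17}) with a summable sequence $\{\beta_k\}$ after taking square roots (absorbing the cross terms via $\sqrt{a+b}\le\sqrt a+\sqrt b$). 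The main obstacle is the estimate of the fractional-power difference $(w^k)^{2/(\gamma-1)}-(w^{k-1})^{2/(\gamma-1)}$ in the low norm: since $x\mapsto x^{2/(\gamma-1)}$ need not be Lipschitz near $0$ when $2/(\gamma-1)<1$, one cannot simply bound this by $\|w^k-w^{k-1}\|$; the resolution is precisely to exploit that $\epsilon^k$ solves the linear transport equation (\ref{eq:6.3}) and to run the contraction argument on $\epsilon^k$ rather than on $w^k$ for this particular term, which is why the statement is phrased in the $Y_\delta$--norm of the full vector $U$ (including $w$) rather than attempting a direct nonlinear estimate.
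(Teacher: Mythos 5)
Your overall strategy coincides with the paper's: form the difference system for $U^{k+1}-U^k$ (the paper's (\ref{eq:6.18}) with the error term (\ref{eq:6.15})), invoke the $L^2_\delta$--energy estimate of Lemma \ref{lem:2} with coefficients bounded uniformly in $k$ by Step 3, apply Gronwall, and shrink the time so that the Gronwall factor is less than one, with $\beta_k$ coming from the initial-data differences (\ref{eq:6.8}). Two points in your write-up deserve correction, one a misdiagnosis and one an omission.

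First, the ``main obstacle'' you identify does not arise. Since $\frac{3}{2}<s<\frac{2}{\gamma-1}+\frac{1}{2}$, one always has $\frac{2}{\gamma-1}>s-\frac{1}{2}>1$, so $x\mapsto x^{2/(\gamma-1)}$ is $C^1$ on $[0,\infty)$ and Lipschitz on bounded sets. Accordingly the paper estimates $\mathcal{F}(v^k,W^k)-\mathcal{F}(v^{k-1},W^{k-1})$ by the mean value theorem applied directly to $f$ as a function of $(v,W)$, bounding $\|\partial f/\partial W\|_{L^\infty}$ by the uniform $X_{s,\delta}$--bounds; no transport-equation argument is needed here. The transport equation (\ref{eq:6.3}) is used only in Step 2, to guarantee that $\epsilon^k=(w^k)^{2/(\gamma-1)}$ is $C_0^\infty$ so that the linear theory applies. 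Your proposal to ``run the contraction on $\epsilon^k$'' would moreover require controlling the differences of the coefficients $b_a$ and $c$ in (\ref{eq:6.3}) between consecutive iterates and would introduce $\epsilon^k$ as an extra unknown not present in the $Y_\delta$--norm, i.e.\ it complicates rather than resolves the step.

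Second, the genuinely delicate point, which you pass over with ``the multiplication property,'' is a weight mismatch: $v^k-v^{k-1}$ lies only in $L^2_\delta$, not in $L^2_{\delta+1}$, whereas the source terms must be measured in $L^2_{\delta+1}$. Hence terms such as $\frac{\partial f}{\partial v}\,(v^k-v^{k-1})$ and $\bigl({\bf a}_{33}^0(v^k)-{\bf a}_{33}^0(v^{k-1})\bigr)\partial_t\partial_x v^k$ cannot be handled by an $L^\infty$--$L^2$ bound. The paper resolves this by showing $\partial f/\partial v\in H_{s,\delta+1}$ (via Propositions \ref{Kateb}, \ref{Algebra} and \ref{Moser}) and using Proposition \ref{Algebra} in the form $H_{s,\delta+1}\times H_{0,\delta}\to H_{0,\delta+1}$, and, for the $\mathcal{A}^0$--difference, by applying the multiplication property twice with the index choices $(s_1,s_2,s)=(1,s-1,0)$ and $(s,1,1)$ together with the observation (Remark \ref{rem:1}) that $\|v^k-v^{k-1}\|_{H_{1,\delta}}$ is controlled by the $Y_\delta$--norm of $U^k-U^{k-1}$. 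Without this bookkeeping the recursion (\ref{eq:6.17}) does not close, so you should make these index choices explicit.
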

\begin{proof}
 The function $(U^{k+1}-U^k)$ satisfies the linear system
\begin{equation}
\label{eq:6.18}
\begin{split}
 \mathcal{A}^0(v^k,W^k)\partial_t (U^{k+1}-U^k)
& =\mathcal{A}^a(v^k,W^k)\partial_a (U^{k+1}-U^k)
+\mathcal{B}(U^k)\begin{pmatrix} v^{k+1}-v^k\\ \partial_t(
v^{k+1}-v^k)\\ \partial_x (v^{k+1}-v^k)\end{pmatrix}
\\ & +\mathcal{F}(v^k,W^k)-\mathcal{F}(v^{k-1},W^{k-1})+
\mathcal{D}^k,
\end{split}
\end{equation} 
where
\begin{equation}
\label{eq:6.15}
\begin{split}
 \mathcal{D}^k&=-\left(\mathcal{A}^0(v^k,W^k)-\mathcal{A}^0(v^{k-1},
W^{k-1})\right)\partial_t
U^k \\ &+
\left(\mathcal{A}^a(v^k,W^k)-\mathcal{A}^a(v^{k-1},W^{k-1})\right)\partial_a
U^k+
\left(\mathcal{B}(U^k)-\mathcal{B}(U^{k-1})\right)\begin{pmatrix} v^k\\
\partial_t v^k\\ \partial_x v^k\end{pmatrix}.
\end{split}
\end{equation}

By (\ref{eq:6.10}), (\ref{eq:6.7}) and Proposition \ref{Embedding},
$\|\mathcal{A}^\alpha(v^k,W^k)\|_{L^\infty}$,
$\|\partial_\alpha\mathcal{A}^\alpha(v^k,W^k)\|_{L^\infty}$ and 
$\left\|\mathcal{B}(U^k)\right\|_{L^\infty}$ are bounded by a constant
independent of $k$, so we may apply
 Lemma \ref{lem:2} and get that 
\begin{equation}
\label{eq:6.19}
\begin{split}
 &\frac{d}{dt}\left\|U^{k+1}(t)-U^k(t)\right\|_{Y,\mathcal{A}^0}^2\leq C_1\left(
\left\|U^{k+1}(t)-U^k(t)\right\|_{Y,\mathcal{A}^0}^2
\right. \\ & \left.
+\left\|\mathcal{F}(v^k,W^k)-\mathcal{F}(v^{k-1},W^{k-1})\right\|_{L_{ \delta+1
}}^2 + \left\|\mathcal{D}^k\right\|_{L_{
\delta+1
}}^2\right).
\end{split}
\end{equation}

Thus our main task is to estimate the two last terms of the above
inequality by means of the difference
$\|U^{k}-U^{k-1}\|_{Y_\delta}^2$.
Recall that $\mathcal{F}(v,W)=(0,f(v,w),0,0)^T$, where $f$ is a scalar
function given in (\ref{eq:3.10}), so
\begin{equation*}
\begin{split}
 & \ \ \ \ f(v^k,W^k)-f(v^{k-1},W^{k-1}) \\ &=\left(\frac{\partial f}{\pa
v}\left(\tau v^k+\left(1-\tau\right) v^{k-1}, \tau W^k+\left(1-\tau\right)
W^{k-1}\right)\right)\left(v^k-v^{k-1}\right)\\ &
+\left(\frac{\partial f}{\partial W}\left(\tau v^k+\left(1-\tau\right) v^{k-1},
\tau W^k+\left(1-\tau\right)
W^{k-1}\right)\right)\left(W^k-W^{k-1}\right)
\end{split}
\end{equation*}
for some $\tau\in [0,1]$.
Note that $(v^k-v^{k-1})$ belong to $L^2_\delta$, while we need to
estimate the $L^2_{\delta+1}$-norm of the above expressions.
However, since the Makino $w\in H_{s+1,\delta+1}\subset
H_{s,\delta+1}$, we get from (\ref{eq:3.10}), and Propositions
\ref{Kateb}, \ref{Algebra} and \ref{Moser} that $\frac{\partial
  f}{\partial v}\in H_{s,\delta+1}$.
Therefore, by the equivalence $\|u\|_{L^2_\delta}\simeq
\|u\|_{H_{0,\delta}}$ (Proposition \ref{equivalence}) and the
multiplicity property (\ref{eq:elliptic_part:12}), we obtain
\begin{equation*}
\begin{split}
\left\|\frac{\partial f}{\partial
v}\left(v^k-v^{k-1}\right)\right\|_{H_{0,\delta+1}}
\lesssim 
\left\|\frac{\partial f}{\pa
v}\right\|_{H_{s,\delta+1}}\left\|v^k-v^{k-1}\right\|_{H_{0,\delta}
}
\lesssim \left\|\frac{\partial f}{\pa
v}\right\|_{H_{s,\delta+1}}\left\|v^k-v^{k-1}\right\|_{L^2_\delta}.
\end{split}
\end{equation*}

The other term is somewhat easier to treat, since $(W^k-W^{k-1})\in
L^2_{\delta+1}$ and hence 
\begin{equation*}
 \left\|\frac{\partial f}{\partial
W}\left(W^k-W^{k-1}\right)\right\|_{L^2_{\delta+1}}\leq \left\|\frac{\partial
f}{\partial W}\right\|_{L^\infty} \left\|W^k-W^{k-1}\right\|_{L^2_{\delta+2}}.
\end{equation*}
Now $\left\|\frac{\partial f}{\partial W}\right\|_{L^\infty} $ is
bounded by a constant depending on $\left\|U^k\right\|_{X_{s,\delta}}$
and $\left\|U^{k-1}\right\|_{X_{s,\delta}}$, and these are independent
of $k$.
Thus
\begin{equation}
\label{eq:6.23}
 \left\|\mathcal{F}(v^k,W^k)-\mathcal{F}(v^{k-1},W^{k-1})
\right\|_{L^2_{\delta+1}}\leq C_2 \left\|U^k-U^{k-1}\right\|_{Y,\delta},
\end{equation} 
where the constant $C_2$ is independent of $k$.
We shall now estimate the first term of $\mathcal{D}^k$ in (\ref{eq:6.15}). From
the structure of $\mathcal{A}^0(v,W)$ we see that
\begin{equation*}
\begin{split}
& \left(\mathcal{A}^0(v^k,W^k)-\mathcal{A}^0(v^{k-1},W^{k-1})\right)\partial_t
U^k =\\ &\left({\bf a}_{33}^0(v^k)-{\bf
a}_{33}^0(v^{k-1})\right)\partial_t\partial_x v^k +
\left({\bf a}_{44}^0(v^k,W^k)-{\bf
a}_{44}^0(v^{k-1},W^{k-1})\right)\partial_tW^k.
\end{split}
\end{equation*}

Now
\begin{equation*}
{\bf a}_{33}^0(v^k)-{\bf a}_{33}^0(v^{k-1})=\frac{\partial{\bf a}_{33}^0}{\pa
v}\left(\tau v^k+(1-\tau)v^{k-1}\right)\left(v^k-v^{k-1}\right)
\end{equation*} 
for some $\tau\in[0,1]$.
But since $(v^k-v^{k-1})\not\in L^2_{\delta+1}$, we cannot use the
$L^\infty$--$L^2$ estimates.
We therefore apply the algebra (or multiplication) property, once 
with $s_1=1$, $s_2=s-1$ and $s=0$, and once with $s_1=s$, $s_2=1$ and
$s=1$, which results in the following inequality:
\begin{equation}
\label{eq:6.29}
 \begin{split}
&
\left\|\left({\bf a}_{33}^0(v^k)-{\bf
a}_{33}^0(v^{k-1})\right)\partial_t\partial_x v^{k-1}\right\|_{H_{0,\delta+1}}\\
\lesssim &
\left\| \frac{\partial{\bf a}_{33}^0}{\pa
v}\left(\tau
v^k+(1-\tau)v^{k-1}\right)\left(v^k-v^{k-1}\right)\right\|_{H_{1,\delta}}
\left\|\partial_t\partial_x v^{k-1}\right\|_{H_{s-1,\delta+2}}
\\ \lesssim &
\left\| \frac{\partial{\bf a}_{33}^0}{\pa
v}\left(\tau
v^k+(1-\tau)v^{k-1}\right)\right\|_{H_{s,\delta}}
\left\|v^k-v^{k-1}\right\|_{H_{1,\delta } } 
\left\|\partial_t v^{k-1}\right\|_{H_{s,\delta+1}}.
 \end{split}
\end{equation}

We use now the third Moser
inequality (\ref{eq:14}) and $\|v^k-v^{k-1}\|_{L^\infty}\lesssim
\|v^k-v^{k-1}\|_{H_{s+1,\delta}}$ in order to obtain
\begin{equation}
\label{eq:6.30}
 \left\| \frac{\partial{\bf a}_{33}^0}{\partial
v}\left(\tau v^k+(1-\tau)v^{k-1}\right)\right\|_{H_{s+1,\delta}}\leq
C
\left(\left\| v^k\right\|_{H_{s+1,\delta}}, 
\left\|v^{k-1}\right\|_{H_{s+1,\delta}}\right).
\end{equation}
where 
\begin{displaymath}
C
\left(\left\| v^k\right\|_{H_{s+1,\delta}}, 
\left\|v^{k-1}\right\|_{H_{s+1,\delta}}\right)
\end{displaymath}
denotes that the
constant depends in some way on the terms
$\| v^k\|_{H_{s+1,\delta}}$ and 
$\left\|v^{k-1}\right\|_{H_{s+1,\delta}}$.

In a similar way we obtain
\begin{equation}
\label{eq:6.31}
 \begin{split}
   &\left\|\left({\bf a}_{44}^0(v^k,W^k)-{\bf
a}_{44}^0(v^{k-1},W^{k-1})\right)\partial_tW^k\right\|_{L^2_{\delta+1}}
 \\ \lesssim &
\left(\left\|\frac{\partial{\bf a}_{44}^0}{\partial
v}\left(\tau v^k+(1-\tau)v^{k-1}\right)\right\|_{H_{s,\delta+1}}
\left\|v^k-v^{k-1}\right\|_{H_{1,\delta}}\right. \\ & \left.+
\left\|\frac{\partial{\bf a}_{44}^0}{\partial W}\right\|_{L^\infty}
\left\|W^k-W^{k-1}\right\|_{L^2_{\delta+1}}
\right)
\left\|\partial_t W^k\right\|_{H_{s,\delta+2}}.
\end{split}
\end{equation} 
We recall that  $ \left\|\partial_t W^k\right\|_{H_{s,\delta+2}}$ is
bounded by (\ref{eq:6.7}) and
$\left\|v^k-v^{k-1}\right\|_{H_{1,\delta}}^2\simeq
\left\|v^k-v^{k-1}\right\|_{L^2_\delta}^2+\left\|\partial_xv^k-\partial_x
v^{k-1}\right\|_{L^2_{\delta+1} }^2$, 
therefore
from (\ref{eq:6.29}), (\ref{eq:6.30}) and (\ref{eq:6.31}) we obtain that
\begin{equation}
\label{eq:6.21}
 \left\|\left(\mathcal{A}^0(v^k,W^{k})-\mathcal{A}^0(v^{k-1},
W^{k-1})\right)\partial_t
U^k\right\|_{L^2_{\delta+1}}\leq C_3 \left\|
U^k-U^{k-1}\right\|_{Y_{\delta}}.
\end{equation}

In a similar manner we can estimate the difference involving the
$\mathcal{A}^a$ matrices.
The estimate of $\mathcal{B}(U^k)-\mathcal{B}(U^{k-1})$ is simpler,
since its first column of the matrix $\mathcal{B}$ is zero and
therefore this expression does not contain the element
$(v^k-v^{k-1})$.
Thus we conclude from inequalities (\ref{eq:6.19}),
(\ref{eq:6.23}) and (\ref{eq:6.21}) that
\begin{equation*}
\begin{split}
 &\frac{d}{dt}\left\|U^{k+1}(t)-U^k(t)\right\|_{Y,\mathcal{A}^0}^2\leq C_4\left(
\left\|U^{k+1}(t)-U^k(t)\right\|_{Y,\mathcal{A}^0}^2 + 
\left\|U^{k}(t)-U^{k-1}(t)\right\|_{Y,\mathcal{A}^0}^2
\right),
\end{split}
\end{equation*} 
where $C_4$ is independent of $k$. Therefore by Gronwall's
inequality, we obtain that for any
$T^{\ast\ast}\leq T^\ast$, 
\begin{equation*}
\begin{split}
 \sup_{0\leq t
\leq T^{\ast\ast}} &\left\|U^{k+1}(t)-U^k(t)\right\|_{Y,\mathcal{A}^0}^2\leq
e^{C_4T^{\ast\ast}}\left(\left\|U_0^{k+1}-U_0^k\right\|_{Y_\delta,\mathcal{A}^0}
^2 \right. \\ &+ T^{\ast\ast}C_4\left.\sup_{0\leq t
\leq T^{\ast\ast}}\left\|U^{k}(t)-U^{k-1}(t)\right\|_{Y,\mathcal{A}^0}^2\right),
\end{split}
\end{equation*} 
and hence inequality (\ref{eq:6.17}) holds if we chose $T^{\ast\ast}$
so that $\Lambda:=\sqrt{2C_4e^{C_4T^{\ast\ast}}T^{\ast\ast}}<1$, and
set $\beta_k:=\sqrt{2e^{C_4T^{\ast\ast}}}
\left\|U_0^{k+1}-U_0^k\right\|_{Y_\delta,\mathcal{A}^0}$.
\end{proof}
\vskip 5mm
\noindent
\textbf{Step 5.}
We discuss here the convergence.
It follows from Proposition \ref{prop:6.3} that 
\begin{equation*}
\sum_k
\left\|U^{k+1}(t)-U^k(t)\right\|_{Y_\delta}<\infty,
\end{equation*}
hence $\{U^k(t)\}$ is a Cauchy sequence in
$L^\infty\left([0,T^{\ast\ast}],Y_\delta\right)$.
Applying the Gagliardo-Nirenberg-Moser estimate $\|u\|_{H^{s'}}\leq
\|u\|_{H^{s}}^{\frac{s'}{s}}\|u\|_{L^2}^{1-\frac{s'}{s}}$ term--wise
to the norm (\ref{eq:weighted:4}), we get that
\begin{equation}
 \|u\|_{H_{s',\delta}}\leq\|u\|_{H_{s,\delta}}^{\frac{s'}{s}}\|u\|_{L^2_\delta}^
{ 1-\frac{s'}{s}}\quad \text{for}\quad 0<s'<s\ \text{and} \ \delta\in\setR.
\end{equation}

Hence $\{U^k(t)\}$ is a Cauchy sequence in 
$L^\infty\left([0,T^{\ast\ast}],X_{s',\delta}\right)$
and therefore $U^k(t)\to U(t)$ in the $X_{s',\delta}$--norm 
for any $0<s'<s$ and $\delta\geq -\frac{3}{2}$.
Furthermore, by  Remark \ref{rem:2}, $v^k(t)\to
v(t)$ in $H_{s'+1,\delta}$-norm. Thus if we choose $\frac{3}{2}<s'<s$, then by
the embedding (\ref{eq:em:2}),
\begin{equation*}
 v^k(t)\to v(t), \  W^k(t)\to W(t) \quad \text{in}\quad C^1
\end{equation*}
and
\begin{equation*}
 \partial_t v^k(t)\to \partial_t v(t),\  \partial_tW^k(t)\to \partial_tW(t)
\quad \text{in}\quad C^0.
\end{equation*} 
Thus, $U(t)=(v(t),\partial_t v(t),\partial_x v(t),W(t)) $ is a solution of the
system
(\ref{eq:3.4}).

\begin{prop}
\label{prop:5}
 For any $\Phi\in X_{s,\delta}$,
\begin{equation}
\label{eq:6.12}
 \lim_k\left\langle U^k(t),\Phi\right\rangle_{X_{s,\delta}}=\left\langle
U(t),\Phi\right\rangle_{X_{s,\delta}}
\end{equation}
uniformly for $0\leq t\leq T^{\ast\ast}$, and where $\left\langle
  \cdot,\cdot\right\rangle_{X_{s,\delta}}$ denote the inner-product
(\ref{eq:5.6}) with $\mathcal{A}^0$ being the identity matrix.
\end{prop}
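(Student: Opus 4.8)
The plan is to read this as a completely soft statement: it is just weak convergence in the Hilbert space $X_{s,\delta}$, to be extracted from the uniform bound on $\{U^k(t)\}$ in $X_{s,\delta}$ (Proposition \ref{prop:4}) together with the strong convergence $U^k(t)\to U(t)$ in $X_{s',\delta}$ obtained in Step 5, via a density argument. The only genuinely new ingredient is a bookkeeping estimate for the dyadic inner--product (\ref{eq:5.6}) that moves regularity onto the test vector.

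First I would record the uniform bound: by Proposition \ref{prop:4} and the equivalence (\ref{eq:5.4}) there is a constant $C$ with $\sup_{0\le t\le T^{**}}\|U^k(t)\|_{X_{s,\delta}}\le C$ for all $k$. Since $U^k(t)\to U(t)$ in $X_{s',\delta}$, hence in $\mathcal{S}'(\setR^3)$, any weakly convergent subsequence of $\{U^k(t)\}$ in $X_{s,\delta}$ must have weak limit $U(t)$; consequently $U(t)\in X_{s,\delta}$ with $\|U(t)\|_{X_{s,\delta}}\le C$, so both sides of (\ref{eq:6.12}) make sense and $\|U^k(t)-U(t)\|_{X_{s,\delta}}\le 2C$ uniformly in $k$ and $t$.

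Now fix $\Phi\in X_{s,\delta}$ and $\varepsilon>0$. By Proposition \ref{Density}(a), applied componentwise, pick $\Phi_\varepsilon\in C_0^\infty(\setR^3)$ with $\|\Phi-\Phi_\varepsilon\|_{X_{s,\delta}}<\varepsilon$, and split
\[
\langle U^k(t)-U(t),\Phi\rangle_{X_{s,\delta}}
=\langle U^k(t)-U(t),\Phi-\Phi_\varepsilon\rangle_{X_{s,\delta}}
+\langle U^k(t)-U(t),\Phi_\varepsilon\rangle_{X_{s,\delta}}.
\]
By Cauchy--Schwarz the first term is at most $2C\varepsilon$, uniformly in $k$ and $t$. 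For the second term I would transfer regularity onto the test vector: term by term in (\ref{eq:5.6}), using that $\Lambda^s$ (respectively $\Lambda^{s+1}$ in the $W$--slot) is self--adjoint on $L^2$, write $\langle\Lambda^s a,\Lambda^s b\rangle_{L^2}=\langle\Lambda^{s'}a,\Lambda^{2s-s'}b\rangle_{L^2}$, apply Cauchy--Schwarz in $L^2$ and then in the dyadic index $j$, and invoke Proposition \ref{equivalence:1}, to obtain a bound of the form
\[
\bigl|\langle U^k(t)-U(t),\Phi_\varepsilon\rangle_{X_{s,\delta}}\bigr|
\lesssim \|U^k(t)-U(t)\|_{X_{s',\delta}}\;\|\Phi_\varepsilon\|_{X_{2s-s',\,\delta}},
\]
where the norms on the right are the weighted fractional norms of the type governed by (\ref{eq:weighted:4}). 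Because $\Phi_\varepsilon\in C_0^\infty$ has compact support, it lies in $X_{\sigma,\delta}$ for every real $\sigma$, so the last factor is a finite constant depending only on $\varepsilon$, while $\sup_{0\le t\le T^{**}}\|U^k(t)-U(t)\|_{X_{s',\delta}}\to 0$ by Step 5. Hence the second term is $<\varepsilon$ for all large $k$, uniformly in $t$, and therefore $\limsup_k\sup_{0\le t\le T^{**}}\bigl|\langle U^k(t)-U(t),\Phi\rangle_{X_{s,\delta}}\bigr|\le(2C+1)\varepsilon$; letting $\varepsilon\to0$ yields (\ref{eq:6.12}).

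The one point that needs care—the main obstacle—is the displayed transfer estimate: the inner product (\ref{eq:5.6}) is not the plain $L^2$ pairing but a weighted dyadic sum of $L^2$ pairings of scaled, cut--off pieces, so one has to verify that shifting $\Lambda^{s-s'}$ (and the analogous operator at level $s+1$ in the fluid block) onto $\Phi_\varepsilon$ reproduces exactly a weighted norm of $\Phi_\varepsilon$ of the form (\ref{eq:weighted:4}), and that the cut--offs $\psi_j^2$ and the scalings $(\,\cdot\,)_{2^j}$ introduce no loss; this is routine given Proposition \ref{equivalence:1}. Uniformity in $t$ is automatic since every estimate above is uniform on $[0,T^{**}]$.
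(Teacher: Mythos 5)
Your argument is correct: the paper itself omits the proof of Proposition \ref{prop:5} (deferring to \cite[\S5]{Ka1}), and what you give is precisely the standard argument used there — uniform boundedness in $X_{s,\delta}$ from Proposition \ref{prop:4}, strong convergence in $X_{s',\delta}$ uniformly in $t$ from Step 5, density of $C_0^\infty$ (Proposition \ref{Density}), and the duality shift $\langle\Lambda^s a,\Lambda^s b\rangle_{L^2}=\langle\Lambda^{s'}a,\Lambda^{2s-s'}b\rangle_{L^2}$ applied termwise in the dyadic sum followed by Cauchy--Schwarz in $j$, which works because the weight $2^{(\delta+\frac{3}{2})2j}$ splits evenly and Proposition \ref{equivalence:1} restores the standard norms. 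The only cosmetic difference is that you establish $U(t)\in X_{s,\delta}$ beforehand via Banach--Alaoglu plus uniqueness of distributional limits, whereas the paper extracts it afterwards from (\ref{eq:6.12}) by weak lower semicontinuity of the norm; both orderings are sound.
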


As a consequence of the weak convergence (\ref{eq:6.12}), we have that
\begin{equation*}
 \left\|U(t)\right\|_{X_{s,\delta}}\leq \liminf_k
\left\|U^k(t)\right\|_{X_{s,\delta}}.
\end{equation*} 
Thus the solution $U(t)$ belongs to $X_{s,\delta}$.
For the  proof of Proposition \ref{prop:5} see \cite[\S5]{Ka1}.
\vskip 5mm
\noindent
\textbf{Step 6.}  Here we shall prove  uniqueness.
\begin{prop}
 Suppose $U_1(t), U_2(t)\in X_{s,\delta}$ are two solutions of (\ref{eq:3.4})
with the same initial data,  then $U_1(t)\equiv U_2(t)$.
\end{prop}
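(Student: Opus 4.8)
The plan is to reduce the uniqueness to the $L^2_\delta$--energy estimate of Lemma \ref{lem:2}. Set $\bar U:=U_1-U_2=(\bar v,\partial_t\bar v,\partial_x\bar v,\bar W)$. Subtracting the two copies of (\ref{eq:3.4}) and adding and subtracting $\mathcal{A}^0(v_1,W_1)\partial_tU_2$, $(\mathcal{A}^a(v_1,W_1)+\mathcal{C}^a)\partial_aU_2$ and $\mathcal{B}(U_1)(v_2,\partial_tv_2,\partial_xv_2)^T$, one finds that $\bar U$ solves the linear symmetric hyperbolic system
\begin{equation*}
\mathcal{A}^0(v_1,W_1)\partial_t\bar U=\left(\mathcal{A}^a(v_1,W_1)+\mathcal{C}^a\right)\partial_a\bar U+\mathcal{B}(U_1)\begin{pmatrix}\bar v\\ \partial_t\bar v\\ \partial_x\bar v\end{pmatrix}+\mathcal{F}(v_1,W_1)-\mathcal{F}(v_2,W_2)+\mathcal{D},
\end{equation*}
with $\bar U(0)=0$, where $\mathcal{D}$ collects $-\bigl(\mathcal{A}^0(v_1,W_1)-\mathcal{A}^0(v_2,W_2)\bigr)\partial_tU_2$, $\bigl(\mathcal{A}^a(v_1,W_1)-\mathcal{A}^a(v_2,W_2)\bigr)\partial_aU_2$ and $\bigl(\mathcal{B}(U_1)-\mathcal{B}(U_2)\bigr)(v_2,\partial_tv_2,\partial_xv_2)^T$. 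This is precisely the structure of (\ref{eq:6.18})--(\ref{eq:6.15}), with the roles of $U^{k+1},U^k,U^{k-1}$ played by $U_1$, $U_1$ and $U_2$ in the coefficients, and by $U_1$ and $U_2$ in the unknowns.

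First I would check that these coefficients satisfy Assumptions \ref{ass:1} with constants independent of $t$. Since $U_1,U_2\in C([0,T],X_{s,\delta})$ and $s>\tfrac32$, Proposition \ref{Embedding} furnishes the $L^\infty$--bounds for $\mathcal{A}^\alpha(v_i,W_i)$, $\partial_\alpha\mathcal{A}^\alpha(v_i,W_i)$ and $\mathcal{B}(U_i)$ (with $\partial_tW_i\in H_{s,\delta+2}$ read off from (\ref{eq:6.11}) applied to $U_i$), while Proposition \ref{Moser} places $\mathcal{A}^\alpha(v_i,W_i)$ minus a constant matrix, and the blocks ${\bf b}_{2j},{\bf b}_{4j}$, in $H_{s+1,\delta}$ and $H_{s,\delta+1}$. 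The uniform positive definiteness (\ref{eq:5.3}) of $\mathcal{A}^0(v_1,W_1)$ holds on a non-empty subinterval $[0,T_0]$ on which $U_1(t)$ remains inside the domain $G$ of (\ref{eq:6.6}), which is possible since $U_1(0)=U_0\in\Int G$; moreover $\bar U(t)\in X_{s,\delta}\subset X_{1,\delta}$ by Proposition \ref{monoton}, so Lemma \ref{lem:2} applies on $[0,T_0]$.

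The substantive step is to bound the inhomogeneous terms by $\|\bar U\|_{Y_\delta}$. For $\mathcal{F}(v_1,W_1)-\mathcal{F}(v_2,W_2)=(0,f(v_1,W_1)-f(v_2,W_2),0,0)^T$ I would argue exactly as for (\ref{eq:6.23}): split the difference of $f$ by the mean value theorem into a $\bar v$--term and a $\bar W$--term; since $w\in H_{s+1,\delta+1}$ and $\tfrac32<s<\tfrac{2}{\gamma-1}+\tfrac12$, Propositions \ref{Kateb}, \ref{Algebra} and \ref{Moser} give $\partial f/\partial v\in H_{s,\delta+1}$, and the multiplication property together with $\|\cdot\|_{L^2_\delta}\simeq\|\cdot\|_{H_{0,\delta}}$ (Proposition \ref{equivalence}) absorbs the weight shift for the $\bar v$--term, while the $\bar W$--term is handled by the $L^\infty$--$L^2$ bound. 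The terms in $\mathcal{D}$ are treated as in (\ref{eq:6.29})--(\ref{eq:6.31}): since $\bar v\notin L^2_{\delta+1}$ one uses the algebra property (Proposition \ref{Algebra}) with $(s_1,s_2,s)=(1,s-1,0)$ and then $(s,1,1)$, the bound on $\|\partial_tW_i\|_{H_{s,\delta+2}}$, and the third Moser inequality (\ref{eq:14}) for $\frac{\partial{\bf a}_{33}^0}{\partial v}$ and $\frac{\partial{\bf a}_{44}^0}{\partial v}$; the $\mathcal{B}$--difference is easier because the first column of $\mathcal{B}$ vanishes. Altogether $\|\mathcal{F}(v_1,W_1)-\mathcal{F}(v_2,W_2)\|_{L^2_{\delta+1}}^2+\|\mathcal{D}\|_{L^2_{\delta+1}}^2\lesssim\|\bar U\|_{Y_\delta}^2$.

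With these bounds Lemma \ref{lem:2} yields $\tfrac{d}{dt}\|\bar U(t)\|_{Y_\delta,\mathcal{A}^0}^2\le C\|\bar U(t)\|_{Y_\delta,\mathcal{A}^0}^2$ on $[0,T_0]$, so $\bar U(0)=0$ and Gronwall force $\bar U\equiv0$ there; since $\{t\in[0,T]:U_1(t)=U_2(t)\}$ is closed and, by re-running the same estimate from any of its points, relatively open in $[0,T]$, it equals $[0,T]$ and $U_1\equiv U_2$. The main obstacle is the weight mismatch in the source estimates: $\bar v$ is controlled only in $L^2_\delta$ whereas Lemma \ref{lem:2} feeds the sources through $L^2_{\delta+1}$, so one must trade a power of the weight $(1+|x|)$ for a degree of Sobolev regularity via the algebra property, while keeping the fractional power $w^{2/(\gamma-1)}$ in $H_{s,\delta+1}$ — which is exactly what pins $s$ to $(\tfrac32,\tfrac{2}{\gamma-1}+\tfrac12)$. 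All of this, however, is already done in Step 4, so the uniqueness proof is essentially a transcription of Proposition \ref{prop:6.3}.
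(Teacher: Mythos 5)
Your proof is correct and follows the same route as the paper: the paper simply notes that $V=U_1-U_2$ satisfies a linear system of the type (\ref{eq:6.18}), applies the Step 4 estimates to get $\frac{d}{dt}\|V\|_{Y_{\delta},\mathcal{A}^0}^2\lesssim\|V\|_{Y_{\delta},\mathcal{A}^0}^2$, and concludes by Gronwall. Your more detailed verification of Assumptions \ref{ass:1} and the weight bookkeeping, plus the open/closed continuation argument, are just an expanded transcription of what the paper leaves implicit.
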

\begin{proof}
Let $V(t)=U_1(t)-U_2(t)$, then it satisfies the  same type  of a
linear system as
(\ref{eq:6.18}), therefore by similar estimates as in Step 4,
we obtain that
\begin{equation*}
 \dfrac{d}{dt}\left\| V(t)\right\|_{Y_{\delta,\mathcal{A}^0}}^2 \lesssim 
\left\| V(t)\right\|_{Y_{\delta,\mathcal{A}^0}}^2,
\end{equation*}
and since $V(0)\equiv 0$, Gronwall's inequality implies that $V(t)\equiv 0$.
\end{proof}
\vskip 5mm
\noindent
\textbf{Step 7.} 
Since $X_{s,\delta}$ is a Hilbert space it suffices to show that
\begin{equation}
\label{eq:6.14}
 \limsup_{t\to
0^+}\left\|U(t)\right\|_{X_{s,\delta,\mathcal{A}^0}}\leq
\left\|U(0)\right\|_{X_{s,\delta,\mathcal{A}^0}}
\end{equation} 
in order to establish the continuity in the norm.
Here $\mathcal{A}^0$ depends on the initial data $\phi$ and $W_0$,
that is, $\mathcal{A}^0=\mathcal{A}^0(\phi,W_0)$.
The proof of (\ref{eq:6.14}) relies on the same arguments as in
\cite{majda84:_compr_fluid_flow_system_conser} and therefore we leave
it out.
This completes the proof of Theorem \ref{thm:1}.

\section{Proof of the main result}
\label{sec:main}
The proof of the main result, Theorem \ref{thm:main}, actually follows
from Theorem \ref{thm:1}, we just have to check whether the initial
data of the gravitational fields and of the fluid satisfy the
assumptions of Theorem \ref{thm:1}.
We recall that $v(t)=g_{\alpha\beta}(t)-\eta_{\alpha\beta}$, so
setting $\phi=v(0)$, we have by the assumptions of Theorem
\ref{thm:main} that $\phi\in H_{s+1,\delta}$.
The initial data for the time derivative $\varphi$ are given
by $\partial_t g_{\alpha\beta}(0)$, where
$\partial_tg_{ab}(0)=-2K_{ab}$ ($a,b=1,2,3$), and $\partial_t
g_{\alpha0}(0)$ is given by expression (\ref{eq:2.7}).
By the assumption of Theorem \ref{thm:main}, $K_{ab}\in
H_{s,\delta+1}$ and therefore by Propositions \ref{Algebra} and
\ref{Moser}, $\partial_t g_{\alpha0}(0)$ also belongs to
$H_{s,\delta+1}$.
Thus $\varphi=\partial_t g_{\alpha\beta}(0)$ satisfies the initial
condition of Theorem \ref{thm:1}.
Note that ${\bf a}_{33}^0(0)=h^{ab}$, where $h_{ab}$ is a proper
Riemannian metric. Since $w(0)\geq 0$ and $u^\alpha(0)$ is a unit
timelike vector, ${\bf a}_{44}^0(0)$ is a positive definite matrix by
Theorem \ref{thm:2}.
Hence $\mathcal{A}^0(\phi,W_0)$ satisfy
condition (\ref{eq:6.5}) and we conclude that
\begin{math}
  U(t)=\left(g_{\alpha\beta}(t)-\eta_{\alpha\beta}, 
\partial_t g_{\alpha\beta}(t),\partial_x g_{\alpha\beta}(t),W(t)\right)\in
C([0,T],X_{s,\delta})
\end{math}.
Hence (\ref{eq:2.13}) follows from Remark \ref{rem:1}, and
(\ref{eq:2.14}) from (\ref{eq:6.4}).
That completes the proof.

\section{Appendix}
\label{sec:appendix}
The classical paper of Hughes, Kato and Marsden \cite{hughes76:_well}
established the short time existence of the vacuum Einstein equations
by solving a second order quasi--linear hyperbolic system whose
solutions $(g_{\alpha\beta},\partial_t g_{\alpha\beta})$ belong to
$H^{s+1}\times H^s$ for $s>\frac{3}{2}$.

On the other hand, Fisher and Marsden treated the Einstein vacuum
equation by means of the theory of symmetric hyperbolic
systems.
However, they only obtained the regularity of $s>\frac{7}{2}$.
In \cite{Ka1} we generalized the result of \cite{hughes76:_well} to
the $H_{s,\delta}$ spaces, treating however, the Einstein equations as
a symmetric hyperbolic system.
Since the techniques of \cite{Ka1}, and in particular the energy
estimates, play an essential role in the present paper, we outline its
main idea that enables us to obtain the same regularity as in
\cite{hughes76:_well}.

We present a heuristic argument explaining the essential idea.
First, if a function $v$ satisfies a wave equation, then the vector
$V=(v,\partial_t v,\partial_x v)$ satisfies a symmetric hyperbolic
system.
The general condition for existence and uniqueness in the
$H^s(\setR^3)$ spaces is $s>\frac{5}{2}$.
Hence, we have by this method that $ \partial_t v,\partial_x v\in H^s$
for $s>\frac{5}{2}$.

However, in our case we improve this regularity  to
$(v,\partial_tv,\partial_xv)\in H^{s+1}\times H^s\times H^s$ for
$s>\frac{3}{2}$. This is because we do not consider a general quasi--linear
symmetric hyperbolic system where the matrices $A^a(V)$ depend on $V$,
but a system in which the matrices $A^a(v)$ only depend on $v$ but
\textbf{not} on its derivatives.

In order to see how this fact allows us to improve
the regularity of the solution we will derive energy estimates for the
linearized symmetric hyperbolic system.
For the sake of clarity we consider a simple hyperbolic system
\begin{equation*}
\partial_t V=A^a(v)\partial_a V,
\end{equation*}
then its linearized form is
\begin{equation}
\label{eq:sec8-appendix:11}
\partial_t V=\tilde A^a\partial_aV.
\end{equation}
Note that in each iteration we solve the linear system
(\ref{eq:sec8-appendix:11}) with $
\widetilde{A}^a=\widetilde{A}^a(v^k)$, and since $V^k=(v^k,\partial_t
v^k,\partial_x v^k)\in H^s$, $v^k\in H^{s+1}$ and hence $
\widetilde{A}^a=\widetilde{A}^a(v^k)\in H^{s+1}$ by Moser type
estimates.
The crucial step is to derive the energy estimate
\begin{equation}
\label{eq:appen:8}
 \dfrac{d }{dt} \left(\dfrac{1}{2}\left\|V\right\|_{H^s}^2\right)\leq C
\left\|V\right\|_{H^s}^2
\end{equation}
for $s>\frac{3}{2}$ and whenever $V$ satisfies the linear system
(\ref{eq:sec8-appendix:11}).
We recall that $\|V\|_{H^s}=\|\Lambda^s V\|_{L^2}$, where $\Lambda^s$
is the pseudodifferential operator $(1-\Delta)^{\frac{s}{2}}$.

One of the basic tools for obtaining (\ref{eq:appen:8}) are
the commutator's estimates.
Here we shall use the following Pseudodifferential operators version
of the  Kato--Ponce estimate \cite[\S3.6]{Taylor91}: Let $P$ be a
differential operator in the class $OPS^s_{1,0}$, then
\begin{equation}
\label{eq:appen:9}
 \left\|P(fg)-fP(g)\right\|_{L^2}\leq C\left\{\|\nabla
f\|_{L^\infty}\|g\|_{H^{s-1}}+\|f\|_{H^s}\|g\|_{L^\infty}\right\},
\end{equation}
for any $f\in H^s\cap C^1$ and $g\in H^{s-1}\cap L^\infty$.

The standard way to obtain (\ref{eq:appen:8}) is to differentiate
$\left\|V\right\|_{H^s}^2$ with respect to time,
to insert the differential equation (\ref{eq:sec8-appendix:11}) and then apply 
a suitable commutator which leads to
\begin{equation*}
 \begin{split}
\label{eq:sec8-appendix:6} 
\dfrac{1}{2} \dfrac{d }{dt} \left\|V\right\|_{H^s}^2 & =
\left\langle
\Lambda^s(V),\Lambda^s\left(\partial_t V\right)\right\rangle_{L^2}=\left\langle
\Lambda^s(V),\Lambda^s\left(\widetilde{A}^a\partial_a
V\right)\right\rangle_{L^2}
\\ & = \left\langle
\Lambda^s(V),\widetilde{A}^a\left(\Lambda^s\left(\partial_a
V\right)\right)\right\rangle_{L^2}
\\ & + \left\langle
\Lambda^s(V),\left[ \Lambda^s\left(\widetilde{A}^a\partial_a
V\right)-\widetilde{A}^a\left(\Lambda^s\left(\partial_a
V\right)\right)\right]\right\rangle_{L^2},
 \end{split}
\end{equation*}
and then the first term is taken care
of by integration by parts and the second one is by applying
the above Kato--Ponce estimate to the operator $\Lambda^s$.
But this procedure results in a term of the form $\left\| \partial_a V
\right\|_{L^{\infty}}$ which contains $\|\partial_a\partial_x
v\|_{L^\infty}$.
In order to estimate it by $ \|\partial_a\partial_x v\|_{H^{s-1}}
\lesssim \|\partial_x v\|_{H^{s}}$ we need to require that
$s-1>\frac{3}{2}$, and hence we do not get the desired result.

We circumvent this difficulty by writing
\begin{equation*}
 \widetilde{A}^a\partial_a V=\partial_a\left( \widetilde{A}^a
V\right)-\left(\partial_a \widetilde{A}^a \right)V,
\end{equation*}
and making the commutation  
\begin{equation*}
 \Lambda^s\left(\partial_a\left( \widetilde{A}^a
V\right)\right)= \left[\left(\Lambda^s\partial_a\right) \left(\widetilde{A}^a
V\right)-  \widetilde{A}^a\left(\Lambda^s\partial_a\right)\left(
V\right) \right]+ \widetilde{A}^a\left(\Lambda^s\partial_a\right)\left(
V\right).
\end{equation*}
which we insert into the first row of equation
(\ref{eq:sec8-appendix:6}).
Then we have to estimate three terms:
\begin{equation*}
I= \left\langle
\Lambda^s(V),\left[\left(\Lambda^s\partial_a\right) \left(\widetilde{A}^a
V\right)-  \widetilde{A}^a\left(\Lambda^s\partial_a\right)\left(
V\right) \right]\right\rangle_{L^2},  
\end{equation*} 
\begin{equation*}
II= \left\langle
\Lambda^s(V), \widetilde{A}^a\left(\Lambda^s\partial_a\right)\left(
V\right) \right\rangle_{L^2}
\end{equation*} 
and
\begin{equation*}
III=
 \left\langle
\Lambda^s(V),\Lambda^s\left((\partial_a
\widetilde{A}^a)V\right)\right\rangle_{L^2}.
\end{equation*}

For the first term we apply the Kato-Ponce commutator
(\ref{eq:appen:9}).
However, this time we do it for the operator $(\Lambda^s\partial_a)$
which has order $s+1$, and hence
\begin{equation*}
\begin{split}
 |I|&\leq \left\|V\right\|_{H^s}\left\|\left(\Lambda^s\partial_a\right)
\left(\widetilde{A}^a
V\right)-  \widetilde{A}^a\left(\Lambda^s\partial_a\right)\left(
V\right)\right\|_{L^2}\\ &\lesssim
\left\|V\right\|_{H^s}\left\{\|\nabla
\widetilde{A}^a\|_{L^\infty}\|V\|_{H^s}+\|\widetilde{A}^a\|_{H^{s+1}}\|V\|_{
L^\infty}\right\}.
\end{split}
\end{equation*}
So by Sobolev embedding theorem, we see that $|I|\lesssim \|
\widetilde{A}^a\|_{H^{s+1}}\|V\|_{H^s}^2$.
Likewise, since $H^s$ is an algebra for $s> \frac{3}{2}$,
\begin{equation*}
 |III|\lesssim \|V\|_{H^s}\|(\partial_a
\widetilde{A}^a)V\|_{H^s}\lesssim \|V\|_{H^s}^2\|(\partial_a
\widetilde{A}^a)\|_{H^s}\lesssim \|\widetilde{A}^a\|_{H^{s+1}}\|V\|_{H^s}^2.
\end{equation*}

Since $\Lambda^s\partial_a=\partial_a\Lambda^s$ and $\widetilde{A}^a$
is symmetric, we obtain a similar estimate for $II$ by using
integration by parts.
Hence we conclude that the energy estimate (\ref{eq:appen:8}) holds.
Note that in the estimate of all three terms above we have used the
fact that $ \widetilde{A}^a\in H^{s+1}$.

For the general case where $A^0\neq I$, one has to define an
appropriated inner-product which takes into account the matrix $A^0$.
Details for the vacuum equations in the weighted spaces $H_{s,\delta}$
and a positive definite $A^0$ can be found in \cite[\S4]{Ka1} and only
slight modifications are needed in order to extend the energy
estimates of \cite{Ka1} to the coupled system (\ref{eq:3.4}).

\bibliographystyle{amsplain}
\bibliography{bibgraf}

\end{document}